\documentclass[preprint,12pt,3p]{elsarticle}
\usepackage{natbib}
\linespread{1.5}
\usepackage{times}
\usepackage{amsthm}
\usepackage{amsmath}
\usepackage{amssymb,bm}
\usepackage{amsfonts}
\usepackage{bbold}
\usepackage{latexsym}
\usepackage{graphics}
\usepackage{epsfig}
\usepackage{epstopdf}

\usepackage{enumitem}
\usepackage{diagbox} 
\usepackage{bbm}
\usepackage{bm} 

\usepackage{graphicx}
\usepackage{float} 
\usepackage{subfigure}
\theoremstyle{definition}
\newtheorem{theorem}{Theorem}[section]

\newtheorem{remark}{Remark}[section]

\newtheorem{corollary}{Corollary}[section]

\newtheorem{lemma}{Lemma}[section]

\journal{Journal Name}


\makeatletter
\def\ps@pprintTitle{%
	\let\@oddhead\@empty
	\let\@evenhead\@empty
	\def\@oddfoot{\centerline{\thepage}}%
	\let\@evenfoot\@oddfoot}
\makeatother


\begin{document}
\begin{frontmatter}

\title{Nonlocal Fully Nonlinear Parabolic Differential Equations Arising in Time-Inconsistent Problems\footnote{Chi Seng Pun gratefully acknowledges Ministry of Education (MOE), AcRF Tier 2 grant (Reference No: MOE2017-T2-1-044) for the funding of this research.}}

\author[label1]{Qian Lei}
\ead{leiq0002@e.ntu.edu.sg}

\author[label1]{Chi Seng Pun\corref{cor1}}
\address[label1]{School of Physical and Mathematical Sciences, Nanyang Technological University, Singapore}
\ead{cspun@ntu.edu.sg}
\cortext[cor1]{Corresponding author}

\begin{abstract}
We prove the well-posedness results, i.e. existence, uniqueness, and stability, of the solutions to a class of nonlocal fully nonlinear parabolic partial differential equations (PDEs), where there is an external time parameter $t$ on top of the temporal and spatial variables $(s,y)$ and thus the problem could be considered as a flow of equations. The nonlocality comes from the dependence on the unknown function and its first- and second-order derivatives evaluated at not only the local point $(t,s,y)$ but also at the diagonal line of the time domain $(s,s,y)$. Such equations arise from time-inconsistent problems in game theory or behavioural economics, where the observations and preferences are (reference-)time-dependent. To address the open problem of the well-posedness of the corresponding nonlocal PDEs (or the time-inconsistent problems), we first study the linearized version of the nonlocal PDEs with an innovative construction of appropriate norms and Banach spaces and contraction mappings over which. With fixed-point arguments, we obtain the well-posedness of nonlocal linear PDEs and establish a Schauder-type prior estimate for the solutions. Then, by the linearization method, we analogously establish the well-posedness under the fully nonlinear case. Moreover, we reveal that the solution of a nonlocal fully nonlinear parabolic PDE is an adapted solution to a flow of second-order forward-backward stochastic differential equations.
\end{abstract}

\begin{keyword}
Existence and Uniqueness \sep Nonlocal Nonlinear Parabolic PDEs \sep Parametric PDEs \sep Time inconsistency \sep FBSDE \sep Mathematics of Behavioral Economics
\end{keyword}
\end{frontmatter}


\section{Introduction} \label{sec:intro}
We study the existence and uniqueness problems for a class of nonlocal fully nonlinear parabolic partial differential equations (PDEs) of the form 
\begin{equation} \label{Nonlocal fully nonlinear equation}
	\left\{
	\begin{array}{rcl}
		u_s(t,s,y) & = & F\big(t,s,y,u(t,s,y),u_y(t,s,y),u_{yy}(t,s,y), \\
		& & \qquad \qquad u(s,s,y),u_y(s,s,y),u_{yy}(s,s,y)\big), \\
		u(t,0,y) & = & g(t,y),\quad 0\leq s\leq t\leq T,\quad y\in\mathbb{R}^d.
	\end{array}
	\right.
\end{equation} 
where the mapping $F$ could be nonlinear with respect to all its arguments, $s$ and $y$ are temporal and spatial variables, respectively, while $t$ could be considered as an external temporal parameter, and the temporal variables $(s,t)$ are defined in a triangular region $\Delta[0,T]:=\left\{(t,s)\in[0,T]^2:~0\leq s\leq t\leq T\right\}$. The nonlocality of \eqref{Nonlocal fully nonlinear equation} stems from the ``diagonal dependence," where $u$, $u_y$, and $u_{yy}$ in $F$ are evaluated not only at $(t,s,y)$ but also at $(s,s,y)$. The PDEs \eqref{Nonlocal fully nonlinear equation} originate from various problems in theories of stochastic control and stochastic differential equation with their applications in financial economics when behavioural factors are considered. \\

\paragraph{\textbf{Organization of this paper}} Subsection \ref{sec:introTIC} briefly introduces the nonlocal PDEs as the driving force of a Nobel-prize winning theory, namely prospect theory, while Subsection \ref{sec:limitation} reviews the literature on this research direction, followed by a succinct account of our insights and main contributions in Subsections \ref{sec:insights} and \ref{sec:contributions}. Our study begins with a linearized case (linear $F$) in Section \ref{sec:linear}, where we first introduce appropriate norms and Banach spaces for our study in Subsection \ref{sec:normspace} and then the well-posedness results in Subsection \ref{sec:linearresult}, which acquires a Schauder-type prior estimates revealing the relationship among solutions of the nonlocal equations, the nonhomogeneous terms, and the initial conditions. Under mild conditions, Section \ref{sec:nonlinear} is devoted to prove the existence and uniqueness of nonlocal fully nonlinear parabolic PDEs by fixed-point arguments. Section \ref{sec:stochrep} presents the close connection between nonlocal parabolic PDEs and forward-backward stochastic differential equations (FBSDEs). Moreover Subsection \ref{sec:2FBVIEs} shows that the solution to a nonlocal fully nolinear parabolic PDE solves a corresponding flow of second-order FBSDEs, which implies a new Feynman--Kac formula. Finally, Section \ref{sec:conclusion} concludes. 


\subsection{Nonlocality, Time-Inconsistency, and the Lack of Flow Property} \label{sec:introTIC}
Pontryagin's maximum principle and Bellman's principle of optimality (or dynamic programming, DP) are two main pillars of solving a stochastic control problem; see \cite{Yong1999}. However, these two principles could be violated in a common situation in economics when a decision-maker's preferences change over time, i.e. dynamic inconsistency (in the context of game theory) or time inconsistency (in the context of behavioral economics). Some well-known examples are the financial mean-variance portfolio selection \cite{Markowitz1952}, hyperbolic discounting in behavioural economics \cite{Frederick2002}, and endogenous habit formulation. As evidenced in the prospect theory (\cite{Kahneman1979}) and \cite{Thaler1981}, the psychological principles of reference dependence and state-dependent objectives should be taken into account during the decision making, but they cause time-inconsistent (TIC) issues. In what follows, we briefly introduce the relevance of solving \eqref{Nonlocal fully nonlinear equation} and its connection with stochastic differential equation (SDE) theory.

Let $\left(\Omega,\mathcal{F},\mathbb{F},\mathbb{P}\right)$ be a completed filtered probability space on which a $k$-dimensional Brownian motion $W(\cdot)$ with the natural filtration $\mathbb{F}=\left\{\mathcal{F}_s\right\}_{s\geq 0}$ augmented by all the $\mathbb{P}$-null sets in $\mathcal{F}$ is well-defined. By choosing a suitable control process $\alpha(\cdot):[s,T]\times\Omega\to U$ with $U\subseteq\mathbb{R}^m$ being a non-empty set, we aim to optimize the following cost functional:
\begin{eqnarray}
	& & \inf_{\alpha}J\left(s,y;\alpha(\cdot)\right) \label{eq:value} \\
	\hbox{with } J\left(s,y;\alpha(\cdot)\right) & := & \mathbb{E}\left[\left.\int^T_s h\left(s,\tau,X(\tau),\alpha(\tau)\right)d\tau
	+g\left(s,X(T)\right)\right|\mathcal{F}_s\right] \label{Cost functional}
\end{eqnarray} 
and the controlled state process $\{X(\tau)\}_{\tau\in[s,T]}$ driven by the following SDE: 
\begin{equation} \label{State equation}
	\left\{
	\begin{array}{lr}
		dX(\tau)=b(\tau,X(\tau),\alpha(\tau))d\tau+\sigma(\tau,X(\tau),\alpha(\tau))dW(\tau), \quad \tau\in[s,T], \\
		X(s)=y, \quad y\in\mathbb{R}^d.  
	\end{array}
	\right.
\end{equation} 
For illustration, we assume a Markovian framework and that all the coefficient and objective functions, $b:[0,T]\times\mathbb{R}^d\times U\mapsto\mathbb{R}^d$, $\sigma:[0,T]\times\mathbb{R}^d\times U\mapsto\mathbb{R}^{d\times k}$, $h:\nabla[0,T]\times\mathbb{R}^d\times U\mapsto\mathbb{R}$ and $g:[0,T]\times\mathbb{R}^d\mapsto\mathbb{R}$ are deterministic, where $\nabla[0,T]:=\{(s,\tau)\in[0,T]^2:~0\le s\le \tau \le T\}$. A distinct feature of problem \eqref{eq:value} is its running and terminal costs, $h$ and $g$ in \eqref{Cost functional}, varying in the time variable $s$. Hyperbolic or delay discounting (see \cite{Thaler1981,Laibson1997}) is a typical example of \eqref{Cost functional}, which induces time (dynamic) inconsistency and incents the decision-makers deviating their pre-committed plan for some future period when it arrives. This is also a common feature of decision-making under the prospect theory \cite{Kahneman1979}. The time inconsistency (TIC) of problem \eqref{eq:value} also manifests itself through a violation of Bellman's principle of optimality and thus standard DP arguments are not directly applicable in this case. In the context of game theory, a pre-commitment plan to problem \eqref{eq:value} is subgame imperfect, where players are the incarnations of the agent parametrized by $s\in[0,T]$. In this context, finding a Nash subgame perfect equilibrium is essentially a consistent planning that is economically meaningful and appealing; see \cite{Strotz1955,Pollak1968}.

Recent studies have contributed to convert the problem \eqref{eq:value} to solving Hamilton--Jacobi--Bellman (HJB) equation(s), which is essentially a (nonlocal) nonlinear PDE. There are two derivations of the PDE based on a modified recursive equation of the value function $V$ in \eqref{eq:value} and a discretization method, respectively, with game-theoretic and DP arguments. \cite{Bjoerk2017} characterize the subgame perfect equilibrium via the solution $(v(s,y),u(t,s,y))$ of \textit{a system of HJB equations}: for any $(t,s)\in\nabla[0,T]$ and $y\in\mathbb{R}^d$, $v(s,y)=u(s,s,y)$ and
\begin{equation} \label{HJB system}  
	\left\{
	\begin{array}{rcl}
		v_s(s,y)+\inf\limits_{a\in U}\left\{\mathcal{H}\big(s,s,y,a,v_y(s,y),v_{yy}(s,y)\big)\right\} & = & 0, \\
		u_s(t,s,y)+\mathcal{H}\big(t,s,y,e(s,y),u_y(t,s,y),u_{yy}(t,s,y)\big) & = & 0,
	\end{array}
	\right. 
\end{equation}
with boundary conditions $v(T,y)=g(T,y)$ and $u(t,T,y)=g(t,y)$ for $t\in[0,T]$ and $y\in\mathbb{R}^d$, where the Hamiltonian $\mathcal{H}:\nabla[0,T]\times\mathbb{R}^d\times U\times\mathbb{R}^d\times\mathbb{S}^{d}\to\mathbb{R}$ with $\mathbb{S}^d\subseteq\mathbb{R}^{d\times d}$ being the set of all $d\times d$ symmetric matrices is defined as: 
\begin{equation} \label{Hamiltonian} 
	\mathcal{H}(t,s,y,a,p,q)=\frac{1}{2}\mathrm{tr}\left[q\cdot\left(\sigma\sigma^\top\right)(s,y,a)\right]+p^\top b(s,y,a)+h\left(t,s,y,a\right)
\end{equation} 
and the $e(s,y)$ in the second equation of \eqref{HJB system} realizes the supremum in the first equation (if it always exists) and constitutes an equilibrium control policy for $s\in[0,T]$. We first provide two important observations about the system \eqref{HJB system}: first, we need to determine simultaneously $u$ and $v$ (or equivalently $e$ that depends on $v$); second, from the first equation and the relation between $v$ and $u$, we know that the $e(s,y)$ depends on $s,y$, $v_y(s,y)=u_y(s,s,y)$, and $v_{yy}(s,y)=u_{yy}(s,s,y)$, i.e.
\begin{equation} \label{Closed form of Equilibrium controls} 
	e(s,y)=\phi\big(s,s,y,u_y(s,s,y),u_{yy}(s,s,y)\big) \quad \hbox{for }\phi(t,s,y,p,q)\in\inf_{a}\mathcal{H}(t,s,y,a,p,q)
\end{equation}
as we allow the control variate $\alpha(\cdot)$ entering both of the drift $b$ and volatility $\sigma$ of the controlled state process $X(\cdot)$. Hence, it suffices to solve for the second equation of \eqref{HJB system} with $e(s,y)$ of the form \eqref{Closed form of Equilibrium controls}: for $(t,s)\in\nabla[0,T]$ and $y\in\mathbb{R}^d$,
\begin{equation} \label{Equilibrium HJB equation}
	u_s(t,s,y)+\mathcal{H}\Big(t,s,y,\phi\big(s,s,y,u_y(s,s,y),u_{yy}(s,s,y)\big),u_y(t,s,y),u_{yy}(t,s,y)\Big)=0
\end{equation}
with boundary condition $u(t,T,y)=g(t,y)$ for $t\in[0,T]$ and $y\in\mathbb{R}^d$, while the verification theorem in \cite{Bjoerk2017} validates that the classical solution to \eqref{HJB system} or equivalently \eqref{Equilibrium HJB equation} is the subgame perfect equilibrium solution. By a discretization method and taking the continuous-time limit, \cite{Yong2012,Wei2017,Yan2019} also yield the same equation of \eqref{Equilibrium HJB equation} for the problem \eqref{eq:value} and the authors call it as an \textit{equilibrium HJB equation}. In contrast with the conventional HJB equation, which is a \textbf{local} fully nonlinear PDE, the equilibrium HJB equation \eqref{Equilibrium HJB equation} is a backward \textbf{nonlocal} fully nonlinear PDE as a special case of \eqref{Nonlocal fully nonlinear equation}. Heuristically speaking, the decision-makers/players in TIC stochastic control problems are supposed to follow the principle of ``globally thinking, locally acting." ``Thinking globally" induces the $u$ terms evaluated at $(t,s,y)$, while ``acting locally" implies them evaluated at $(s,s,y)$ in the equation \eqref{Equilibrium HJB equation}.

In addition to the originations and inspirations from TIC stochastic control problems, nonlocal PDEs are in close connection with the SDE theory, especially for a flow of FBSDEs or backward stochastic Volterra integral equation (BSVIE). As the well-known Feynman--Kac formula (see \cite{Kac1949}) indicates, an adapted solution $(X(\cdot),Y(\cdot),Z(\cdot))$ of a system of FBSDE gives a stochastic representation of the solution $u(s,y)$ of local parabolic PDEs. In the classical setting, the Feynman--Kac formula connects two sides of the adapted solution $Y(\cdot)$ and $Z(\cdot)$ of a BSDE and the solution $u(s,y)$ of the corresponding parabolic PDE in the fashion that $Y(\cdot)=u(\cdot,X(\cdot))$ and $Z(\cdot)=\sigma^\top(\cdot,X(\cdot))u_y(\cdot,X(\cdot))$, where $X(\cdot)$ solves a forward SDE. Depending on whether the FBSDE is coupled or not, the parabolic PDEs are either semi-linear or quasi-linear. By introducing the dependence of the current time $t$ into the generator and the terminal condition of BSDEs, a family of FBSDEs parameterized by $t$ forms a flow of FBSDEs, which is linked to the system of HJB equations. Moreover, a parallel extension of BSDE leads to a BSVIE, which is linked to the equilibrium HJB equation. In the context of SDE theory, BSVIE is \textbf{lack of flow property} that a normal BSDE possesses. Analogous to the FBSDE theory, we can also anticipate a Feynman--Kac-type representations of adapted solutions of FBSVIEs, which are characterized by the solution of a nonlocal parabolic PDE. As Section \ref{sec:stochrep} elaborates, all the nonlocal parabolic PDEs in the existing Feynman--Kac-type formulas of FBSVIEs are special cases of our general nonlocal fully nonlinear PDE \eqref{Nonlocal fully nonlinear equation}. It is noteworthy that a nonlocal fully nonlinear parabolic PDEs are corresponded to a flow of second-order FBSDEs (2BSDEs) or second-order FBSVIEs (2FBSVIEs), which creates new knowledge to the literature. A further exploration between nonlocal parabolic PDEs and a flow of FBSDEs (or 2FBSDEs) will be detailed in Section \ref{sec:stochrep}.


\subsection{Limitation of the Existing Studies} \label{sec:limitation}
In general, the existence and uniqueness problems of the equilibrium HJB equation \eqref{Equilibrium HJB equation} or the system of HJB equations \eqref{HJB system} still remain open, although there are several attempts on their existence and uniqueness in the past decade. To the best of our knowledge, the best result in this regard so far is obtained in \cite{Wei2017}, where the authors presented a existence and uniqueness result for the case that the nonlocal PDEs are restricted to the linear dependence on the second-order derivative at local point $u_{yy}(t,s,y)$ and the removal of the second-order derivative at diagonal $u_{yy}(s,s,y)$. The removal was justified by prohibiting the control from entering the diffusion of the state process $\eqref{State equation}$, i.e. $\sigma(s,y,a)=\sigma(s,y)$, which would lead to the $u_{yy}(s,s,y)$-independence of the equilibrium control policy $e(s,y)$ \eqref{Closed form of Equilibrium controls} and the equilibrium HJB equation \eqref{Equilibrium HJB equation}. The similar restriction is inherited to the subsequent works, e.g., \cite{Hernandez2020,Mei2020,Wang2021}. However, without controls in the diffusion of the state, we can hardly control the risk (noises) from the stochastic systems, which are crucial in many problems such as portfolio management, inventory control, etc. In fact, only when the controls could or would take effect on the magnitude of uncertainty, the stochastic problems differ from the deterministic ones. In the related works of \cite{Yong2012,Wei2017,Hamaguchi2020,Hernandez2020,Mei2020,Wang2020}, all the authors admit that the existence and uniqueness problems of nonlocal fully nonlinear parabolic PDEs with the term $u_{yy}(s,s,y)$ is a complicated open problem. Hence, if we can establish the well-posedness of nonlocal fully nonlinear parabolic PDEs \eqref{Nonlocal fully nonlinear equation}, the well-posedness of the subgame perfect equilibrium is resolved, so are some open problems listed in the discussion of \cite{Bjoerk2017}.

From the perspective of FBSDEs, hindered by the limitation of the nonlocal PDE theory, there is lack of unified and general treatment for the PDE-side in Feynman--Kac formulas of BSVIEs. In fact, as the Feynman--Kac formula indicates, advances of the well-posedness results of nonlocal PDEs allow us to solve directly the corresponding FBSVIE $(X(\cdot),Y(\cdot,\cdot),Z(\cdot,\cdot))$ by combining solutions of nonlocal PDEs $u(\cdot,\cdot,\cdot)$ and of the coupled forward SDE $X(\cdot)$. The well-posedness of nonlocal PDEs can be utilized to study the solvability of FBSVIEs in a similar manner of the classical four-step numerical scheme for studying the solvability of FBSDEs; see \cite{Ma1994}. Likewise, limited by the development of nonlocal PDEs, the FBSVIEs in the existing literature only correspond to some special cases of \eqref{Nonlocal fully nonlinear equation}, such as nonlocal \textit{semi-linear} or \textit{quasi-linear} PDEs as in \cite{Wei2017,Wang2019,Hamaguchi2020}. Therefore, the study of nonlocal \textit{fully nonlinear} PDEs \eqref{Nonlocal fully nonlinear equation} provides a solid foundation of studying a more general form of FBSVIEs.


\subsection{Insights into Analysing Nonlocal Fully Nonlinear PDEs} \label{sec:insights}
Given any suitable pair $(F,g)$ of nonlinearity and initial condition, the well-posedness of the nonlocal differential equations \eqref{Nonlocal fully nonlinear equation} requires
\begin{enumerate}
	\item a solution exists in some sense;
	\item the solution is unique in some space;
	\item the map from $(F,g)$ to solutions is continuous in some topology. 
\end{enumerate}

First of all, this paper adopts the concept of classical solutions with sufficient regularities, i.e. the smoothness of first- and second-order derivatives of $u$ with respect to $s$ and $y$. Within a classical solution framework, we can take advantage of methodologies for local parabolic PDEs in \cite{Friedman1964,Ladyzhanskaya1968,Lunardi1995}, such as various regularity results and prior estimates of solutions, as well as represent the equilibrium controls in a practical closed form \eqref{Closed form of Equilibrium controls}. Moreover, for such a new class of nonlocal PDEs \eqref{Nonlocal fully nonlinear equation}, both dependence on diagonal terms and the order relation between $t$ and $s$ distinguish themselves from the local PDEs. Hence, we need to identify suitable norms and Banach spaces for the nonlocal PDEs and even for all TIC problems for practicality. Under a well-defined norm and the induced Banach space, the non-linearity $F$ is required to be closed in some sense.

In this paper, we make use of fixed-point arguments to show the well-posedness of \eqref{Nonlocal fully nonlinear equation}. The key steps of which are to construct suitable mappings and to prove their contractions under the norms introduced. However, the presence of the second-order diagonal term $u_{yy}(s,s,y)$ causes an essential difficulty for constructing a desired contraction. As discussed in the previous section, this difficulty has been the major obstacle for many studies on TIC problems.

To understand the essence of our problem, we illustrate the difficulty with a simplified problem of nonlocal linear PDE without the low-order terms. For the existence and uniqueness problems of such an equation, it is natural to consider a mapping from $u$ to $w$ that satisfies
\begin{equation} \label{Mapping 1 from u to w} 
	w_s(t,s,y)=a(t,s,y)w_{yy}(t,s,y)+\overline{a}(t,s,y)u_{yy}(s,s,y)
\end{equation}
It is easy to see that the mapping is well-defined, thanks to the classical PDE theory. By replacing the unmanageable diagonal term with a known function $u$, the well-posedness of the PDEs \eqref{Mapping 1 from u to w} parameterized by $t$ promises the existence and uniqueness of the solution $w$. Moreover, it is also clear that the fixed point solves the original nonlocal linear PDE. However, since the input $u$ is of the same order of the output $w$ by simple analysis, it is not feasible to prove that they are contractive.

We overcome the major difficulties by developing a range of techniques and results, three essential ingredients of which are listed as follows:  
\begin{enumerate}[label=(\alph*)] 
	\item studying the well-posedness of a linearized version of \eqref{Nonlocal fully nonlinear equation}, i.e. \eqref{Nonlocal linear PDE with t}, where the mapping $F$ is linear with respect to all its arguments related to $u$. Giving up the kind of mappings directly from $u$ to $w$ like \eqref{Mapping 1 from u to w}, we propose a coupled nonlocal PDE system of $(u,u_t)$, which consists of the unknown function $u$ and its derivative $u_t$ with respective to $t$, and the system is mathematically equivalent to the linearized nonlocal PDE via the linkage between their solutions ($u$). In this way, the smoothness of $u$ in $t$ can help us achieve a desired contraction such that the system of $(u,u_t)$ admits a unique solution. Consequently, the well-posedness of original linearized nonlocal PDEs is established. The technical details are discussed in Theorem \ref{Well-posedness of (u,u_t)};
	\item establishing a Schauder-type prior estimate for solutions of linear nonlocal PDEs studied in (a). In general, a successful treatment of nonlinear PDEs depends on a prior estimate. Based on the norms defined by us for nonlocal PDEs (and TIC problems), the prior estimate not only controls the behavior of the solutions and provides quantitative information on the regularity of solutions, but also gives a certain compactness to the class of possible solutions. Such a compactness is necessary for Picard's iterative method and Banach's fixed-point theorem in the study of nonlocal fully nonlinear PDEs \eqref{Nonlocal fully nonlinear equation}. The Schauder prior estimate is presented in Corollary \ref{Schauder estimates};  
	\item linearizing nonlocal fully nonlinear PDEs into nonlocal linear equations. The established results in (a) and (b) are utilized to show the well-posedness of nonlocal fully nonlinear PDEs of the form \eqref{Nonlocal fully nonlinear equation} with the linearization. We first construct a mapping from $u$ to $w$  $$w_s=\mathcal{L}w+F(t,s,y,u,u_y,u_{yy},u|_{t=s},u_y|_{t=s},u_{yy}|_{t=s})-\mathcal{L}u$$ 
	where $\mathcal{L}$ is a nonlocal linear differential operator defined in \eqref{Nonlocal linear differential operator L}. Given $u$ in a suitable space, the solvability of nonlocal linear PDEs proven in (a) ensures the operator well-defined. Furthermore, the prior estimate obtained in (b) provides a bound of the solution $w$ with the nonhomogeneous term $F-\mathcal{L}u$ and a given initial condition $g$. Subsequently, we can prove that the mapping is a contraction in a suitable Banach space. Consequently, the unique fixed point solves the nonlocal fully nonlinear PDEs \eqref{Nonlocal fully nonlinear equation}.  
\end{enumerate}

Finally, the stability of solutions with respect to the data $(F,g)$ is also important for theoretical analysis and practical applications. The map from data to solutions is continuous in the topology induced by our norms. In summary, the essential difficulties of studying the nonlocal fully nonlinear PDEs (or TIC problems) is not only to establish suitable Banach spaces and well-defined mappings over them, but also to ensure
that the mappings are contractive over the norm-induced spaces.


\subsection{Main Contributions} \label{sec:contributions}
This paper aims to address some open problems in the theories of PDE, SDE, and stochastic controls for the cases where there is nonlocality (for PDEs), lack of flow property (for FBSDEs), or time-inconsistency (for optimal stochastic controls). By noting the connections among these three fields, the aforementioned three scenarios are haunted by the similar mathematical difficulty. Hence, our study of nonlocality in PDEs is beneficial to understanding time-inconsistency in FBSDE and control problems.

The main contributions of the this paper is threefold. First, we propose suitable norms and Banach spaces for studying nonlocal PDEs and TIC problems. Under them, the well-posedness results of nonlocal linear PDEs are first proven. After establishing a Schauder-type prior estimate, the method of linearization is adopted to show the existence and uniqueness of nonlocal fully nonlinear PDEs. Second, thanks to these regularity results of nonlocal PDEs \eqref{Nonlocal fully nonlinear equation}, the well-posedness of solutions of the equilibrium HJB equation \eqref{Equilibrium HJB equation} and the system of HJB equations \eqref{HJB system}, both of which identify the equilibrium policy and the equilibrium value functions in TIC problems, can be also established even when the control $\alpha(\cdot)$ enters the diffusion term $\sigma$ of state processes (the bottleneck of the existing studies). Third, a unified and general treatment of PDE representations of adapted solutions of a flow of second-order FBSDEs is initiated, which enhances the feasibility of studying FBSDEs or FBSVIEs from the modelling perspective of PDEs.


\section{Nonlocal Linear Parabolic PDEs} \label{sec:linear}
In this section, we first propose suitable norms and Banach spaces for nonlocal PDEs. Based on them, we will prove the solvability of nonlocal linear PDEs and estimate their solutions. 


\subsection{Preliminaries: Norms and Banach Spaces} \label{sec:normspace}
For the conventional local parabolic PDEs, the corresponding differential operator works well over H\"{o}lder spaces of sufficiently smooth functions. It is expected that some revision of the H\"{o}lder spaces are desired and effective for our nonlocal parabolic equations. Hence, we review some definitions of spaces of H\"{o}lder continuous functions here. 

Given $0\leq a\leq b\leq T$, we denote by $C([a,b]\times\mathbb{R}^d)$ the set of all the continuous and bounded functions in $[a,b]\times\mathbb{R}^d$ endowed with the sup norm $|\cdot|^{\infty}_{[a,b]\times\mathbb{R}^d}$. Wherever no confusion arises, we write $|\cdot|^\infty$ instead of $|\cdot|^{\infty}_{[a,b]\times\mathbb{R}^d}$. Next, for $\alpha\in(0,1)$, we introduce H\"{o}lder spaces:
\begin{equation*}
	\begin{split}
		C^{\alpha,0}([a,b]\times\mathbb{R}^d)=\Bigg\{\varphi\in C([a,b]\times\mathbb{R}^d):\langle\varphi\rangle^{(\alpha)}_s:=\sup\limits_{\begin{subarray}{c} a\leq s< s^\prime\leq b \\ y\in\mathbb{R}^d\end{subarray}}\frac{|\varphi(s,y)-\varphi(s^\prime,y)|}{|s-s^\prime|^\alpha}<\infty, \\
		|\varphi|_{C^{\alpha,0}([a,b]\times\mathbb{R}^d)}:=|\varphi|^\infty+\langle\varphi\rangle^{(\alpha)}_s<\infty\Bigg\}
	\end{split}
\end{equation*}
and similarly, 
\begin{equation*}
	\begin{split}
		C^{0,\alpha}([a,b]\times\mathbb{R}^d)=\Bigg\{\varphi\in C([a,b]\times\mathbb{R}^d):\langle\varphi\rangle^{(\alpha)}_y:=\sup\limits_{\begin{subarray}{c} a\leq s\leq b \\ y,y^\prime\in\mathbb{R}^d\end{subarray}}\frac{|\varphi(s,y)-\varphi(s,y^\prime)|}{|y-y^\prime|^\alpha}<\infty, \\
		|\varphi|_{C^{0,\alpha}([a,b]\times\mathbb{R}^d)}:=|\varphi|^\infty+\langle\varphi\rangle^{(\alpha)}_y<\infty\Bigg\}.
	\end{split}
\end{equation*}
Moreover, the classical set $C^{1,2}([a,b]\times\mathbb{R}^d)$ is defined as 
\begin{equation*}
	\begin{split}
		C^{1,2}([a,b]\times\mathbb{R}^d)=\Big\{\varphi\in C([a,b]\times\mathbb{R}^d):\exists~\varphi_s,~\varphi_{y_iy_j}\in C([a,b]\times\mathbb{R}^d),~i,j=1,\ldots,d\Big\}. 
	\end{split}
\end{equation*}
which is endowed with 
\begin{equation*}
	|\varphi|_{C^{1,2}([a,b]\times\mathbb{R}^d)}=|\varphi|^\infty+\sum^d_{i=1}|\varphi_{y_i}|^\infty+|\varphi_s|^\infty+\sum^d_{i,j=1}|\varphi_{y_iy_j}|^\infty. 
\end{equation*}

Now, we define ``parabolic" H\"{o}lder spaces, which are very common in the study of local parabolic equations. We set 
\begin{equation*}
	C^{\frac{\alpha}{2},\alpha}([a,b]\times\mathbb{R}^d)= C^{\frac{\alpha}{2},0}([a,b]\times\mathbb{R}^d)\bigcap C^{0,\alpha}([a,b]\times\mathbb{R}^d),
\end{equation*}
the norm of which is given by 
\begin{equation*}
	|\varphi|_{C^{\frac{\alpha}{2},\alpha}([a,b]\times\mathbb{R}^d)}= |\varphi|_{C^{\frac{\alpha}{2},0}([a,b]\times\mathbb{R}^d)}+ |\varphi|_{C^{0,\alpha}([a,b]\times\mathbb{R}^d)}. 
\end{equation*}
Furthermore, we have 
\begin{equation*}
	\begin{split}
		C^{1+\frac{\alpha}{2},2+\alpha}([a,b]\times\mathbb{R}^d)=\Big\{\varphi\in C^{1,2}([a,b]\times\mathbb{R}^d):~\varphi_s,~\varphi_{y_iy_j}\in C^{\frac{\alpha}{2},\alpha}([a,b]\times\mathbb{R}^d), \\
		i,j=1,\ldots,d\Big\}  
	\end{split}
\end{equation*} 
with the following norm 
\begin{equation*}
	\begin{split}
		|\varphi|_{C^{1+\frac{\alpha}{2},2+\alpha}([a,b]\times\mathbb{R}^d)}=&~|\varphi|^\infty+|\varphi_s|_{C^{\frac{\alpha}{2},\alpha}([a,b]\times\mathbb{R}^d)} \\
		&+\sum^d_{i=1}|\varphi_{y_i}|^\infty+\sum^d_{i,j=1}|\varphi_{y_iy_j}|_{C^{\frac{\alpha}{2},\alpha}([a,b]\times\mathbb{R}^d)}.
	\end{split} 
\end{equation*}
For convenience, $|\cdot|_{C^{\frac{\alpha}{2},\alpha}([a,b]\times\mathbb{R}^d)}$ and $|\cdot|_{C^{1+\frac{\alpha}{2},2+\alpha}([a,b]\times\mathbb{R}^d)}$ are usually denoted as $|\cdot|^{(\alpha)}_{[a,b]\times\mathbb{R}^d}$ and $|\cdot|^{(2+\alpha)}_{[a,b]\times\mathbb{R}^d}$. In addition, wherever no confusion arises, we do not distinguish between $|\varphi(\cdot)|^{(\alpha)}_{[a,b]\times\mathbb{R}^d}$ and $|\varphi(\cdot)|^{(\alpha)}_{\mathbb{R}^d}$ and between $|\varphi(\cdot)|^{(2+\alpha)}_{[a,b]\times\mathbb{R}^d}$ and $|\varphi(\cdot)|^{(2+\alpha)}_{\mathbb{R}^d}$ for functions $\varphi(y)$ independent of $s$.  

After reviewing the classical H\"{o}lder spaces, we revise on top of them to fulfil requirements of nonlocal PDEs and TIC problems. Before that, we first investigate the following two essential features of nonlocal PDEs and TIC problems. 
\begin{enumerate}
	\item (\textbf{The order relation between $t$ and $s$})
	Since both TIC control problems and FBSVIEs are backward problems, $t$ and $s$ usually represent the initial time and the running time, respectively, where $0\leq t\leq s\leq T$. Considering a symmetry between forward and backward problems, we can formulate our nonlocal PDEs as forward problems and consider a reverse order relation, i.e. $0\leq s\leq t\leq T$. Given the relation between $t$ and $s$, we are concerned only with functions defined over the triangle $\Delta[0,\delta]$ as illustrated in Figure \ref{fig:triangledomain} instead of a rectangle $[0,\delta]^2$;
	\begin{figure}[!ht]
		\centering
		\includegraphics[width=0.5\textwidth]{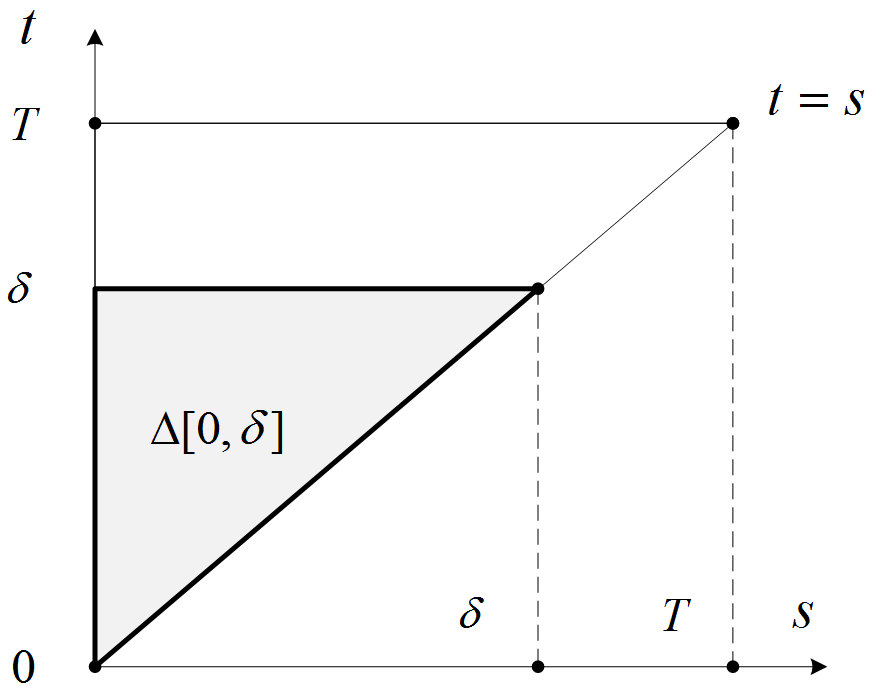}
		\caption{Time domain $\Delta[0,\delta]$ of solutions of nonlocal PDEs}
		\label{fig:triangledomain}
	\end{figure}
	\item (\textbf{Regularity and smoothness}) Inspired by parabolic H\"{o}lder spaces for local PDEs, it is natural to preserve sufficient smoothness of the functions with respect to the variables $s$ and $y$. Specifically, we will study the functions in space $C^{1+\frac{\alpha}{2},2+\alpha}$. Next, it is essential to determine regularity of the external parameter $t$ of $u(t,\cdot,\cdot)$. In this paper, we require that $u(t,s,y)$ is $C^1$ continuous with respect to $t$. Two main reasons behind the requirement are as follows: (a) when constructing the contractions and estimating solutions of nonlocal PDEs, we often encounter the evaluation of the difference $|\varphi(s,s,y)-\varphi(s^\prime,s^\prime,y)|$. The differentiability of $\varphi$ in $t$ makes some theorems like mean-value theorem available; (b) More importantly, the regularity of $u$ with respect to $t$ can help us convert nonlocal linear PDEs \eqref{Nonlocal linear PDE with t} into a system of coupled nonlocal PDEs \eqref{Nonlocal linear PDE system with t} for $(u,u_t)$. Consequently, the difficulty of studying the original nonlocal linear PDEs by fixed-point arguments can be resolved by investigating the induced system. Informally speaking, our successful settlement depends substantially on the contribution of the differentiability in $t$ to the establishment of contractions.
\end{enumerate}

After the preliminary analysis, we begin to define the norms and Banach spaces for nonlocal PDEs and TIC problems. For $0\leq\delta\leq T$, we introduce norms 
\begin{eqnarray*}
	[\phi]^{(k+\alpha)}_{[0,\delta]} & := & \sup\limits_{t\in[0,\delta]}\left\{|\phi(t,\cdot,\cdot)|^{(k+\alpha)}_{[0,t]\times\mathbb{R}^d}\right\}, \\
	\lVert\phi\rVert^{(k+\alpha)}_{[0,\delta]} & := & \sup\limits_{t\in[0,\delta]}\left\{|\phi(t,\cdot,\cdot)|^{(k+\alpha)}_{[0,t]\times\mathbb{R}^d}+|\phi_t(t,\cdot,\cdot)|^{(k+\alpha)}_{[0,t]\times\mathbb{R}^d}\right\}, 
\end{eqnarray*}
where $k=0,1,2$. Then they induce the following normed spaces, respectively,  
\begin{eqnarray*}
	\Theta^{(k+\alpha)}_{[0,\delta]} & := & \left\{\theta(\cdot,\cdot,\cdot)\in C(\Delta[0,\delta]\times\mathbb{R}^d):[\theta]^{(k+\alpha)}_{[0,\delta]}<\infty\right\}, \\
	\Omega^{(k+\alpha)}_{[0,\delta]} & := & \left\{\omega(\cdot,\cdot,\cdot)\in C(\Delta[0,\delta]\times\mathbb{R}^d): \Vert\omega\rVert^{(k+\alpha)}_{[0,\delta]}<\infty\right\}. 
\end{eqnarray*}
It is easy to see that both $[\phi]^{(k+\alpha)}_{[0,\delta]}$ and $\lVert\phi\rVert^{(k+\alpha)}_{[0,\delta]}$ are norms under which $\Theta^{(k+\alpha)}_{[0,\delta]}$ and $\Omega^{(k+\alpha)}_{[0,\delta]}$ are Banach spaces, respectively. The definitions above leverage not only the order relation between $t$ and $s$ but also the sufficient regularities in all arguments.


\subsection{Wellposedness of Nonlocal Linear Parabolic PDEs} \label{sec:linearresult} 
Now, we are ready to study the solvability of nonlocal linear parabolic PDEs with the form 
\begin{equation} \label{Nonlocal linear PDE with t}  
	\left\{
	\begin{array}{lr}
		u_s(t,s,y)=\sum^d\limits_{i,j=1} a_{ij}(t,s,y)u_{y_iy_j}(t,s,y)+\sum^d\limits_{i=1} b_i(t,s,y)u_{y_i}(t,s,y)+c(t,s,y)u(t,s,y) \\
		\qquad\qquad+\sum^d\limits_{i,j=1}\overline{a}_{ij}(t,s,y)u_{y_iy_j}(s,s,y)+\sum^d\limits_{i=1}\overline{b}_i(t,s,y)u_{y_i}(s,s,y)+\overline{c}(t,s,y)u(s,s,y) \\
		\qquad\qquad+f(t,s,y), \\
		
		\quad \\
		
		u(t,0,y)=g(t,y), \quad 0\leq s\leq t\leq T, \quad y\in\mathbb{R}^d.  
	\end{array}
	\right.
\end{equation} 
where all coefficients $a$, $\overline{a}$, $b$, $\overline{b}$, $c$, and $\overline{c}$ belong to $\Omega^{{(\alpha)}}_{[0,T]}$ and satisfy the uniform ellipticity condition, i.e., there exists some $\lambda>0$ such that 
\begin{eqnarray}
	\sum^d\limits_{i,j=1}a_{ij}(t,s,y)\xi_i\xi_j & \geq & \lambda|\xi|^2, \label{Uniform ellipticity condition 1} \\
	\sum^d\limits_{i,j=1}\left(a_{ij}(t,s,y)+\overline{a}_{ij}(t,s,y)\right)\xi_i\xi_j & \geq & \lambda|\xi|^2, \label{Uniform ellipticity condition 2}
\end{eqnarray}
for any $(t,s)\in\Delta[0,T]$ and $y,\xi\in\mathbb{R}^d$. Moreover, the nonhomogeneous term $f\in\Omega^{(\alpha)}_{[0,T]}$ and the initial condition $g\in\Omega^{(2+\alpha)}_{[0,T]}$.

Suppose that $u$ is a solution of \eqref{Nonlocal linear PDE with t}, then its first-order derivative $\frac{\partial u}{\partial t}$ with respect to $t$ should satisfy the following differential equation: 
\begin{equation} \label{Differentiate Nonlocal linear PDE with t with respect to t}   
	\left\{
	\begin{array}{lr}
		\left(\frac{\partial u}{\partial t}\right)_s(t,s,y)=\sum^d\limits_{i,j=1} a_{ij}(\cdot)\left(\frac{\partial u}{\partial t}\right)_{y_iy_j}(t,s,y)+\sum^d\limits_{i=1}b_i(\cdot)\left(\frac{\partial u}{\partial t}\right)_{y_i}(t,s,y)+c(\cdot)\left(\frac{\partial u}{\partial t}\right)(t,s,y) \\
		\qquad\qquad\qquad +\sum^d\limits_{i,j=1}\frac{\partial a_{ij}(\cdot)}{\partial t}u_{y_iy_j}(t,s,y)+\sum^d\limits_{i=1}\frac{\partial b_i(\cdot)}{\partial t}u_{y_i}(t,s,y)+c_t(\cdot)u(t,s,y) \\
		\qquad\qquad\qquad +\sum^d\limits_{i,j=1}\frac{\partial\overline{a}_{ij}(\cdot)}{\partial t}u_{y_iy_j}(s,s,y)+\sum^d\limits_{i=1}\frac{\partial\overline{b}_{i}(\cdot)}{\partial t}u_{y_i}(s,s,y)+\overline{c}_t(\cdot)u(s,s,y)+f_t(\cdot), \\
		
		u_t(t,0,y)=g_t(t,y), \quad 0\leq s\leq t\leq T, \quad y\in\mathbb{R}^d.  
	\end{array}
	\right.
\end{equation} 
where we use a convention that $\phi(\cdot)=\phi(t,s,y)$. Next, for any $i,j=1,\ldots,d$, we notice the following integral representations 
\begin{equation} \label{from u(s) to u(t)}
	\left\{
	\begin{aligned}
		u(t,s,y)-u(s,s,y)&=\int^t_s \frac{\partial u}{\partial t}(\theta,s,y) d\theta, \\
		u_{y_i}(t,s,y)-u_{y_i}(s,s,y)&=\int^t_s\left(\frac{\partial u}{\partial t}\right)_{y_i}(\theta,s,y)d\theta, \\
		u_{y_iy_j}(t,s,y)-u_{y_iy_j}(s,s,y)&=\int^t_s\left(\frac{\partial u}{\partial t}\right)_{y_iy_j}(\theta,s,y)d\theta. 
	\end{aligned}
	\right.
\end{equation}

\eqref{from u(s) to u(t)} allows us to rewrite the diagonal terms in \eqref{Nonlocal linear PDE with t} and \eqref{Differentiate Nonlocal linear PDE with t with respect to t} as integrals of $\frac{\partial u}{\partial t}$ or its derivatives.
Hence, by coupling equations \eqref{Nonlocal linear PDE with t} and \eqref{Differentiate Nonlocal linear PDE with t with respect to t} with \eqref{from u(s) to u(t)}, we obtain a nonlocal linear PDE system for $\left(u,v:=\frac{\partial u}{\partial t}\right)$:
\begin{equation} \label{Nonlocal linear PDE system with t}  
	\left\{
	\begin{array}{lr}
		u_s(t,s,y)=\sum^d\limits_{i,j=1}\left(a_{ij}(\cdot)+\overline{a}_{ij}(\cdot)\right)u_{y_iy_j}(t,s,y)+\sum^d\limits_{i=1}\left(b_i(\cdot)+\overline{b}_i(\cdot)\right)u_{y_i}(t,s,y) \\
		\qquad\qquad -\sum^d\limits_{i,j=1}\overline{a}_{ij}(\cdot)\int^t_s v_{y_iy_j}(\theta,s,y)d\theta -\sum^d\limits_{i=1}\overline{b}_i(\cdot)\int^t_s v_{y_i}(\theta,s,y)d\theta \\ \qquad\qquad+\left(c(\cdot)+\overline{c}(\cdot)\right)u(t,s,y)-\overline{c}(\cdot)\int^t_s v(\theta,s,y)d\theta +f(\cdot), \\
		
		v_s(t,s,y)=\sum^d\limits_{i,j=1}a_{ij}(\cdot)v_{y_iy_j}(t,s,y)+\sum^d_{i=1}b_i(\cdot)v_{y_i}(t,s,y)+c(\cdot)v(t,s,y) \\
		\qquad\qquad
		+\sum^d\limits_{i,j=1}\left(\frac{\partial a_{ij}(\cdot)}{\partial t}+\frac{\partial\overline{a}_{ij}(\cdot)}{\partial t}\right)u_{y_iy_j}(t,s,y)+\sum^d\limits_{i=1}\left(\frac{\partial b_{i}(\cdot)}{\partial t}+\frac{\partial\overline{b}_i(\cdot)}{\partial t}\right)u_{y_i}(t,s,y) \\
		\qquad\qquad -\sum^d\limits_{i,j=1}\frac{\partial\overline{a}_{ij}(\cdot)}{\partial t}\int^t_s v_{y_iy_j}(\theta,s,y)d\theta -\sum^d\limits_{i=1}\frac{\partial\overline{b}_i(\cdot)}{\partial t}\int^t_s v_{y_i}(\theta,s,y)d\theta \\
		\qquad\qquad+\left(c_t(\cdot)+\overline{c}_t(\cdot)\right)u(t,s,y)-\overline{c}_t(\cdot)\int^t_s v(\theta,s,y)d\theta+f_t(\cdot), \\
		
		\left(u,v\right)(t,0,y)=\left(g,g_t\right)(t,y), \quad 0\leq s\leq t\leq T, \quad y\in\mathbb{R}^d.  
	\end{array}
	\right.
\end{equation} 

Before studying the well-posedness of the system \eqref{Nonlocal linear PDE system with t} for $(u,v)$, we first prove the equivalence between \eqref{Nonlocal linear PDE with t} and \eqref{Nonlocal linear PDE system with t} in the following lemma.

\begin{lemma}\label{Equivalence between equation and system} The nonlocal equation \eqref{Nonlocal linear PDE with t} is equivalent to the system \eqref{Nonlocal linear PDE system with t}, i.e.,
	\begin{enumerate}
		\item If $u$ is a solution of \eqref{Nonlocal linear PDE with t}, then $(u,u_t)$ solves \eqref{Nonlocal linear PDE system with t}.
		\item Conversely, if \eqref{Nonlocal linear PDE system with t} admits a solution pair $(u,v)$, then $u$ solves \eqref{Nonlocal linear PDE with t} and $v=u_t$.
	\end{enumerate}
\end{lemma}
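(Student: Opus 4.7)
}

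The plan is to handle the two implications separately, with the second (converse) direction being where the real work lies.

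\textbf{Direction 1 ($u$ solves \eqref{Nonlocal linear PDE with t} $\Rightarrow$ $(u,u_t)$ solves \eqref{Nonlocal linear PDE system with t}).} This direction is essentially a bookkeeping exercise. First, I would use the integral identities \eqref{from u(s) to u(t)} to rewrite each diagonal term $u(s,s,y)$, $u_{y_i}(s,s,y)$, $u_{y_iy_j}(s,s,y)$ in \eqref{Nonlocal linear PDE with t} as the corresponding local value at $(t,s,y)$ minus an integral of $u_t$ (or its spatial derivative) over $[s,t]$; collecting the resulting local terms with their original coefficients yields precisely the first equation of \eqref{Nonlocal linear PDE system with t} with $v:=u_t$. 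Next, since the ambient space $\Omega^{(2+\alpha)}_{[0,T]}$ builds $u_t$ into the norm and all coefficients lie in $\Omega^{(\alpha)}_{[0,T]}$, I may differentiate \eqref{Nonlocal linear PDE with t} in $t$, producing \eqref{Differentiate Nonlocal linear PDE with t with respect to t}; applying \eqref{from u(s) to u(t)} to the diagonal terms there yields the second equation of the system. The boundary condition $v(t,0,y)=g_t(t,y)$ is immediate from $u(t,0,y)=g(t,y)$.

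\textbf{Direction 2 (converse).} Here the crux is to prove $v=u_t$, after which substituting $v=u_t$ into the first equation and undoing the integral identities \eqref{from u(s) to u(t)} restores \eqref{Nonlocal linear PDE with t}. My strategy is to differentiate the first equation of \eqref{Nonlocal linear PDE system with t} with respect to $t$ and then subtract the second equation. Differentiating the integrals $\int_s^t v(\theta,s,y)\,d\theta$, $\int_s^t v_{y_i}(\theta,s,y)\,d\theta$, $\int_s^t v_{y_iy_j}(\theta,s,y)\,d\theta$ in $t$ via Leibniz produces two kinds of terms: (i) integrals of $v$ weighted by $\bar a_t,\bar b_t,\bar c_t$, which exactly match analogous terms already present in the $v$-equation; and (ii) boundary terms $-\bar a\, v_{y_iy_j}(t,s,y)$, $-\bar b\, v_{y_i}(t,s,y)$, $-\bar c\, v(t,s,y)$ coming from the upper endpoint. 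Combining these boundary terms with the leading elliptic terms $\sum(a+\bar a)(u_t)_{y_iy_j}$, $\sum(b+\bar b)(u_t)_{y_i}$, $(c+\bar c)u_t$ from the differentiated first equation, and the terms $\sum a\,v_{y_iy_j}$, $\sum b\,v_{y_i}$, $c\,v$ from the second equation, I expect the algebra to collapse cleanly so that $w:=u_t-v$ satisfies the local linear parabolic equation
\begin{equation*}
w_s=\sum_{i,j=1}^d (a_{ij}+\bar a_{ij})w_{y_iy_j}+\sum_{i=1}^d (b_i+\bar b_i)w_{y_i}+(c+\bar c)\,w
\end{equation*}
on $\{(s,y):0\le s\le t\}\times\mathbb{R}^d$, parametrised by $t$, with zero initial data $w(t,0,y)=g_t(t,y)-g_t(t,y)=0$.

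\textbf{Conclusion and main obstacle.} I would then invoke uniqueness for the local linear parabolic Cauchy problem under the uniform ellipticity condition \eqref{Uniform ellipticity condition 2} together with the $\Omega^{(\alpha)}_{[0,T]}$-regularity of the coefficients (a standard consequence of the classical Schauder theory in \cite{Friedman1964,Ladyzhanskaya1968}) to conclude $w\equiv 0$, hence $v=u_t$. Substituting $v=u_t$ into the first equation of \eqref{Nonlocal linear PDE system with t} and applying \eqref{from u(s) to u(t)} in reverse to each $\int_s^t u_t\,d\theta$ recovers \eqref{Nonlocal linear PDE with t}, finishing the proof. The main obstacle I anticipate is purely the bookkeeping in the subtraction step: one has to verify that the $v$-equation has been written precisely so that every term arising from the Leibniz rule on the integrals cancels against exactly one explicit term in the second equation, leaving only the symmetric elliptic combination acting on $w$. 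This cancellation is what makes the system \eqref{Nonlocal linear PDE system with t} the ``right'' reformulation of \eqref{Nonlocal linear PDE with t} and is precisely the feature that will be exploited later when setting up the contraction mapping.
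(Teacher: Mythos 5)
Your proposal is correct and follows essentially the same route as the paper: in the converse direction, you differentiate the first equation of \eqref{Nonlocal linear PDE system with t} in $t$, subtract the second, observe that the Leibniz boundary terms combine with the explicit elliptic terms so that $w:=u_t-v$ solves the local parabolic Cauchy problem $w_s=\sum_{i,j}(a_{ij}+\overline{a}_{ij})w_{y_iy_j}+\sum_i(b_i+\overline{b}_i)w_{y_i}+(c+\overline{c})w$ with $w(t,0,y)=0$, and then invoke classical uniqueness under \eqref{Uniform ellipticity condition 2} to get $v=u_t$, after which substituting back and undoing \eqref{from u(s) to u(t)} recovers \eqref{Nonlocal linear PDE with t}. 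This matches the paper's argument step for step.
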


\begin{proof}
	The first claim is straightforward as it follows how we derived the equations \eqref{Nonlocal linear PDE system with t} before. Next, we are focused on proving the second claim. 
	
	Let $(u,v)$ be a solution of \eqref{Nonlocal linear PDE system with t}. Differentiating the first equation of \eqref{Nonlocal linear PDE system with t} with respect to $t$ gives
	\begin{equation} \label{u_t in lemma}   
		\left\{
		\begin{array}{lr}
			\left(\frac{\partial u}{\partial t}\right)_s(t,s,y)=\sum^d\limits_{i,j=1}\left(a_{ij}(\cdot)+\overline{a}_{ij}(\cdot)\right) \left(\frac{\partial u}{\partial t}\right)_{y_iy_j}(t,s,y)+\sum^d\limits_{i,j=1}\left(\frac{\partial a_{ij}(\cdot)}{\partial t}+\frac{\partial\overline{a}_{ij}(\cdot)}{\partial t}\right)u_{y_iy_j}(t,s,y) \\
			\qquad\qquad\qquad+\sum^d\limits_{i=1}\left(b_i(\cdot)+\overline{b}_i(\cdot)\right) \left(\frac{\partial u}{\partial t}\right)_{y_i}(t,s,y)+\sum^d\limits_{i=1}\left(\frac{\partial b_i(\cdot)}{\partial t}+\frac{\partial\overline{b}_i(\cdot)}{\partial t}\right)u_{y_i}(t,s,y) \\
			\qquad\qquad\qquad+\left(c(\cdot)+\overline{c}(\cdot)\right)\left(\frac{\partial u}{\partial t}\right)(t,s,y)+\left(\frac{\partial c(\cdot)}{\partial t}+\frac{\partial\overline{c}(\cdot)}{\partial t}\right)u(t,s,y) \\
			\qquad\qquad\qquad -\sum^d\limits_{i,j=1}\overline{a}_{ij}(\cdot)v_{y_iy_j}(t,s,y)-\sum^d\limits_{i,j=1}\frac{\partial\overline{a}_{ij}(\cdot)}{\partial t}\int^t_s v_{y_iy_j}(\theta,s,y)d\theta \\ \qquad\qquad\qquad-\sum^d\limits_{i=1}\overline{b}_i(\cdot)v_{y_i}(t,s,y)-\sum^d\limits_{i=1}\frac{\partial\overline{b}_i(\cdot)}{\partial t}\int^t_s v_{y_i}(\theta,s,y)d\theta \\ \qquad\qquad\qquad-\overline{c}(\cdot)v(t,s,y)-\frac{\partial\overline{c}(\cdot)}{\partial t}\int^t_s v(\theta,s,y)d\theta +f_t(\cdot), \\
			
			\left(\frac{\partial u}{\partial t}\right)(t,0,y)=g_t(t,y), \quad 0\leq s\leq t\leq T, \quad y\in\mathbb{R}^d.  
		\end{array}
		\right.
	\end{equation} 
	
	From \eqref{u_t in lemma} and \eqref{Nonlocal linear PDE system with t}, we can find that $\left(\frac{\partial u}{\partial t}-v\right)(t,s,y)$ satisfies:  
	\begin{equation} \label{u_t-v in lemma}   
		\left\{
		\begin{array}{lr}
			\left(\frac{\partial u}{\partial t}-v\right)_s(t,s,y)=\sum^d\limits_{i,j=1}\left(a_{ij}(\cdot)+\overline{a}_{ij}(\cdot)\right) \left(\frac{\partial u}{\partial t}-v\right)_{y_iy_j}(t,s,y) \\
			\qquad \qquad\qquad\qquad+\sum^d\limits_{i=1}\left(b_i(\cdot)+\overline{b}_i(\cdot)\right) \left(\frac{\partial u}{\partial t}-v\right)_{y_i}(t,s,y) \\
			\qquad \qquad\qquad\qquad+\left(c(\cdot)+\overline{c}(\cdot)\right)\left(\frac{\partial u}{\partial t}-v\right)(t,s,y) \\
			
			\left(\frac{\partial u}{\partial t}-v\right)(t,0,y)=0, \quad 0\leq s\leq t\leq T, \quad y\in\mathbb{R}^d.  
		\end{array}
		\right.
	\end{equation} 
	which implies that $\left(\frac{\partial u}{\partial t}-v\right)(t,s,y)\equiv 0$, thanks to the classical PDE theory. 
	
	Next, we replace $v$ in the first equation of \eqref{Nonlocal linear PDE system with t} with $\frac{\partial u}{\partial t}$. Then we have 
	\begin{align*}
		& u_s(t,s,y)=  \sum^d\limits_{i,j=1}\left(a_{ij}(\cdot)+\overline{a}_{ij}(\cdot)\right)u_{y_iy_j}(t,s,y)+\sum^d\limits_{i=1}\left(b_i(\cdot)+\overline{b}_i(\cdot)\right)u_{y_i}(t,s,y) \\
		&\qquad\qquad\qquad -\sum^d\limits_{i,j=1}\overline{a}_{ij}(\cdot)\int^t_s \left(\frac{\partial u}{\partial t}\right)_{y_iy_j}(\theta,s,y)d\theta -\sum^d\limits_{i=1}\overline{b}_i(\cdot)\int^t_s \left(\frac{\partial u}{\partial t}\right)_{y_i}(\theta,s,y)d\theta \\
		&\qquad\qquad\qquad+\left(c(\cdot)+\overline{c}(\cdot)\right)u(t,s,y)-\overline{c}(\cdot)\int^t_s \left(\frac{\partial u}{\partial t}\right)(\theta,s,y)d\theta +f(\cdot) \\
		&\qquad=\sum^d\limits_{i,j=1} a_{ij}(\cdot)u_{y_iy_j}(t,s,y)+\sum^d\limits_{i=1} b_i(\cdot)u_{y_i}(t,s,y)+c(\cdot)u(t,s,y)\qquad\qquad\qquad \\
		&\qquad\qquad\qquad+\sum^d\limits_{i,j=1}\overline{a}_{ij}(\cdot)u_{y_iy_j}(s,s,y)+\sum^d\limits_{i=1}\overline{b}_i(\cdot)u_{y_i}(s,s,y)+\overline{c}(\cdot)u(s,s,y)+f(\cdot)
	\end{align*}
	Therefore, the result follows.
\end{proof}

With the equivalence between \eqref{Nonlocal linear PDE with t} and \eqref{Nonlocal linear PDE system with t}, the well-posedness of \eqref{Nonlocal linear PDE system with t} guarantees the solvability of \eqref{Nonlocal linear PDE with t}. Next, for the coupled system for $(u,v)$ induced from \eqref{Nonlocal linear PDE with t}, we have the following conclusions.

\begin{theorem} \label{Well-posedness of (u,u_t)}
	Suppose that all coefficient functions and $f$ of \eqref{Nonlocal linear PDE with t} belong to $\Omega^{{(\alpha)}}_{[0,T]}$ and assume that $g\in\Omega^{{(2+\alpha)}}_{[0,T]}$. Then there exist $\delta>0$ and a unique solution pair $(u,v)\in\Theta^{{(2+\alpha)}}_{[0,\delta]}\times\Theta^{{(2+\alpha)}}_{[0,\delta]}$ satisfying \eqref{Nonlocal linear PDE system with t} in $\Delta[0,\delta]\times\mathbb{R}^d$. 
\end{theorem}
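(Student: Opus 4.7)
The plan is to prove the well-posedness of \eqref{Nonlocal linear PDE system with t} via a Banach fixed-point argument on $\Theta^{(2+\alpha)}_{[0,\delta]} \times \Theta^{(2+\alpha)}_{[0,\delta]}$ with $\delta > 0$ chosen sufficiently small. The crucial observation is that, once the external parameter $t$ is frozen, both equations of \eqref{Nonlocal linear PDE system with t} become \emph{local} linear parabolic PDEs in $(s,y)$ on $[0,t] \times \mathbb{R}^d$, the nonlocality being confined to integrals $\int_s^t (\cdot)\,d\theta$ whose length is at most $\delta$. This furnishes the small parameter that will drive the contraction.

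First I would define a mapping $\Psi: (u,v) \mapsto (\tilde u, \tilde v)$ as follows. For each $t \in [0,\delta]$, let $\tilde u(t,\cdot,\cdot)$ be the unique classical solution on $[0,t] \times \mathbb{R}^d$ of the local parabolic problem obtained from the first equation of \eqref{Nonlocal linear PDE system with t} by substituting the input $v$ into all integrals, with initial datum $g(t,\cdot)$; uniform ellipticity is assured by \eqref{Uniform ellipticity condition 2}. Analogously, $\tilde v(t,\cdot,\cdot)$ solves the local parabolic problem from the second equation with $(u,v)$ substituted on the right-hand side and initial datum $g_t(t,\cdot)$, with ellipticity provided by \eqref{Uniform ellipticity condition 1}. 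Classical Schauder theory for local linear parabolic PDEs furnishes both solutions along with the a priori bound
\begin{equation*}
|\tilde u(t,\cdot,\cdot)|^{(2+\alpha)}_{[0,t]\times \mathbb{R}^d} \leq C \left( |g(t,\cdot)|^{(2+\alpha)}_{\mathbb{R}^d} + |R_1(t,\cdot,\cdot)|^{(\alpha)}_{[0,t]\times\mathbb{R}^d}\right),
\end{equation*}
and similarly for $\tilde v$ with $g_t$ in place of $g$ and $R_2$ the corresponding nonhomogeneous term, where the constant $C$ is uniform in $t \in [0,T]$. Taking the supremum over $t \in [0,\delta]$ shows that $\Psi$ maps $\Theta^{(2+\alpha)}_{[0,\delta]} \times \Theta^{(2+\alpha)}_{[0,\delta]}$ into itself once the nonhomogeneous terms are controlled.

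The key lemma behind the contraction is that $I(s,y) := \int_s^t h(\theta,s,y)\,d\theta$, for $h \in \Theta^{(2+\alpha)}_{[0,\delta]}$, satisfies $|I|^{(\alpha)}_{[0,t]\times \mathbb{R}^d} \leq C\delta\, [h]^{(2+\alpha)}_{[0,\delta]}$: differentiation in $s$ produces the boundary term $-h(s,s,y)$ whose H\"{o}lder norm is dominated by $[h]^{(2+\alpha)}$, spatial derivatives commute with the integral, and the remaining contributions each carry a factor of $t-s \leq \delta$. Combining this with the Schauder bound applied to the differences arising from two inputs $(u_i,v_i)$ yields
\begin{equation*}
[\tilde u_1 - \tilde u_2]^{(2+\alpha)}_{[0,\delta]} \leq C\delta\,[v_1 - v_2]^{(2+\alpha)}_{[0,\delta]},
\end{equation*}
\begin{equation*}
[\tilde v_1 - \tilde v_2]^{(2+\alpha)}_{[0,\delta]} \leq C\,[u_1 - u_2]^{(2+\alpha)}_{[0,\delta]} + C\delta\,[v_1 - v_2]^{(2+\alpha)}_{[0,\delta]}.
\end{equation*}

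The main obstacle is the absence of a $\delta$-factor in front of $[u_1 - u_2]^{(2+\alpha)}_{[0,\delta]}$ above: the $v$-equation depends on $u$ through \emph{local} pointwise evaluations, not through an integral. To resolve this, I would equip the product space with the weighted norm $N(u,v) := [u]^{(2+\alpha)}_{[0,\delta]} + \kappa\, [v]^{(2+\alpha)}_{[0,\delta]}$, first choose $\kappa \in (0,1/(2C))$ so that $\kappa C < 1/2$, and then take $\delta$ small enough that $C\delta(1+\kappa)/\kappa < 1/2$; summing the two displays then gives $N(\tilde u_1 - \tilde u_2, \tilde v_1 - \tilde v_2) \leq \rho\,N(u_1 - u_2, v_1 - v_2)$ for some $\rho < 1$. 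Banach's fixed-point theorem on a closed ball in $\Theta^{(2+\alpha)}_{[0,\delta]} \times \Theta^{(2+\alpha)}_{[0,\delta]}$ whose radius is calibrated from the data $f, f_t, g, g_t$ via the same Schauder bound then produces a unique fixed point, which by construction is the unique solution pair $(u,v)$ of \eqref{Nonlocal linear PDE system with t}.
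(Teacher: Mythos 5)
Your construction uses a genuinely different fixed-point map from the paper's. The paper's operator $\Gamma$ acts on $v$ alone: given $v$, one solves the first equation of \eqref{Nonlocal linear PDE system with t} for an intermediate $u$, then feeds that \emph{freshly computed} $u$ (together with the integrals of the input $v$) into the second equation, obtaining $V = \Gamma(v)$. Because $u - \widehat{u}$ already carries a factor $\delta^{1-\alpha/2}[v-\widehat{v}]^{(2+\alpha)}$, the estimate for $V - \widehat{V}$ inherits that smallness and the contraction closes without any weight. You instead define $\Psi$ on the pair $(u,v)$, feed the \emph{input} $u$ (rather than the output $\tilde u$) into the second equation, and then compensate for the resulting $O(1)$ dependence of $\tilde v$ on $u$ with a weighted norm $[u]+\kappa[v]$. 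Both routes produce the same set of fixed points and both are sound; the paper's is slightly more economical (no extra parameter $\kappa$), while yours is a more modular application of a standard trick for triangular systems — had you plugged $\tilde u$ instead of $u$ into the second equation, you would have recovered the paper's argument verbatim.

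Two points should be flagged. First, the lemma you state for $I(s,y)=\int_s^t h(\theta,s,y)\,d\theta$ has the wrong exponent: the correct bound is $|I|^{(\alpha)}_{[0,t]\times\mathbb{R}^d}\leq C\delta^{1-\alpha/2}\,[h]^{(2+\alpha)}_{[0,\delta]}$, not $C\delta$. When you estimate the $\frac{\alpha}{2}$-H\"older modulus in $s$, the term $\int_s^{s'}h(\theta,s,y)\,d\theta$ costs $|s'-s|^{1}=|s'-s|^{\alpha/2}\cdot|s'-s|^{1-\alpha/2}$, hence the residual factor is $\delta^{1-\alpha/2}$; your reasoning via "differentiation in $s$" is not the relevant computation since the $C^{\alpha/2,\alpha}$ norm of $I$ involves a modulus of continuity, not $\partial_s I$. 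This does not affect the conclusion (any positive power of $\delta$ will do) but the stated lemma is incorrect as written. Second and more substantively, Banach's theorem gives uniqueness of the fixed point only within the closed ball you construct, whereas the theorem asserts uniqueness in all of $\Theta^{(2+\alpha)}_{[0,\delta]}\times\Theta^{(2+\alpha)}_{[0,\delta]}$. You assert this larger uniqueness "by construction," but it does not follow automatically. The paper closes this gap by a separate argument (a supremum-of-coincidence-times contradiction). In your setting there is a quicker fix, since for the \emph{linear} system the contraction constant is independent of $R$: given any purported second solution, one may simply enlarge $R$ to a ball containing both and invoke uniqueness within that ball. You need to say this explicitly for the proof to be complete.
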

\begin{proof}
	The theorem is proven with fixed-point arguments and the overall idea is as follows. We first establish a contraction $\Gamma$ defined over a closed subset $\mathcal{V}$ with a radius $R$ in $\Theta^{{(2+\alpha)}}_{[0,\delta]}$ and then show that it admits a unique fixed point in $\mathcal{V}$. Finally, a contradiction is constructed to argue for the uniqueness of the solution to the system \eqref{Nonlocal linear PDE system with t} in $\Theta^{{(2+\alpha)}}_{[0,\delta]}\times\Theta^{{(2+\alpha)}}_{[0,\delta]}$. \\
	
	\noindent (\textbf{Existence}) We make use of \eqref{Nonlocal linear PDE system with t} to construct a mapping from a conservative vector field $v$ to a conservative vector field $V$ by $\Gamma(v)=V$, where $V$ is part of the solution pair $(u,V)$ to a system of equations:
	\begin{equation} \label{Contraction from v to V}  
		\left\{
		\begin{array}{lr}
			u_s(t,s,y)=\sum^d\limits_{i,j=1}\left(a_{ij}(\cdot)+\overline{a}_{ij}(\cdot)\right)u_{y_iy_j}(t,s,y)+\sum^d\limits_{i=1}\left(b_i(\cdot)+\overline{b}_i(\cdot)\right)u_{y_i}(t,s,y) \\
			\qquad\qquad -\sum^d\limits_{i,j=1}\overline{a}_{ij}(\cdot)\int^t_s v_{y_iy_j}(\theta,s,y)d\theta -\sum^d\limits_{i=1}\overline{b}_i(\cdot)\int^t_s v_{y_i}(\theta,s,y)d\theta \\ \qquad\qquad+\left(c(\cdot)+\overline{c}(\cdot)\right)u(t,s,y)-\overline{c}(\cdot)\int^t_s v(\theta,s,y)d\theta +f(\cdot), \\
			
			V_s(t,s,y)=\sum^d\limits_{i,j=1}a_{ij}(\cdot)V_{y_iy_j}(t,s,y)+\sum^d_{i=1}b_i(\cdot)V_{y_i}(t,s,y)+c(\cdot)V(t,s,y) \\
			\qquad\qquad 
			+\sum^d\limits_{i,j=1}\left(\frac{\partial a_{ij}(\cdot)}{\partial t}+\frac{\partial\overline{a}_{ij}(\cdot)}{\partial t}\right)u_{y_iy_j}(t,s,y)+\sum^d\limits_{i=1}\left(\frac{\partial b_{i}(\cdot)}{\partial t}+\frac{\partial\overline{b}_i(\cdot)}{\partial t}\right)u_{y_i}(t,s,y) \\
			\qquad\qquad -\sum^d\limits_{i,j=1}\frac{\partial\overline{a}_{ij}(\cdot)}{\partial t}\int^t_s v_{y_iy_j}(\theta,s,y)d\theta -\sum^d\limits_{i=1}\frac{\partial\overline{b}_i(\cdot)}{\partial t}\int^t_s v_{y_i}(\theta,s,y)d\theta \\
			\qquad\qquad+\left(c_t(\cdot)+\overline{c}_t(\cdot)\right)u(t,s,y)-\overline{c}_t(\cdot)\int^t_s v(\theta,s,y)d\theta+f_t(\cdot), \\
			
			\left(u,V\right)(t,0,y)=\left(g,g_t\right)(t,y), \quad 0\leq s\leq t\leq \delta, \quad y\in\mathbb{R}^d.  
		\end{array}
		\right.
	\end{equation} 
	The operator $\Gamma(v)=V$ is defined in the set 
	\begin{equation*}
		\mathcal{V}=\left\{v\in\Theta^{(2+\alpha)}_{[0,\delta]}:~v(t,0,y)=g_t(t,y),~[v-g]^{(2+\alpha)}_{[0,\delta]}\leq R\right\}.
	\end{equation*}
	
	It is clear that given a conservative vector field $v$ satisfying the initial condition $g_t(t,y)$, the system \eqref{Contraction from v to V} admits a unique solution $V$ and thus $\Gamma(\cdot)$ is well-defined.
	
	Next, we shall show that the mapping $\Gamma$ has a unique fixed point. Let $v$, $\widehat{v}\in \mathcal{V}$. Then, $w:=\Gamma(v)-\Gamma(\widehat{v})$ is part of the solution pair $(u-\widehat{u},w)$ to a system of equations:
	\begin{equation} \label{Difference between v to v hat}
		\left\{
		\begin{array}{lr}
			\left(u-\widehat{u}\right)_s(t,s,y)=\sum^d\limits_{i,j=1}\left(a_{ij}(\cdot)+\overline{a}_{ij}(\cdot)\right)\left(u-\widehat{u}\right)_{y_iy_j}(t,s,y) \\
			\qquad\qquad\qquad+\sum^d\limits_{i=1}\left(b_i(\cdot)+\overline{b}_i(\cdot)\right)\left(u-\widehat{u}\right)_{y_i}(t,s,y) \\
			\qquad\qquad\qquad -\sum^d\limits_{i,j=1}\overline{a}_{ij}(\cdot)\int^t_s \left(v-\widehat{v}\right)_{y_iy_j}(\theta,s,y)d\theta -\sum^d\limits_{i=1}\overline{b}_i(\cdot)\int^t_s \left(v-\widehat{v}\right)_{y_i}(\theta,s,y)d\theta \\
			\qquad\qquad\qquad+\left(c(\cdot)+\overline{c}(\cdot)\right)\left(u-\widehat{u}\right)(t,s,y)-\overline{c}(\cdot)\int^t_s \left(v-\widehat{v}\right)(\theta,s,y)d\theta, \\
			
			w_s(t,s,y)=\sum^d\limits_{i,j=1}a_{ij}(\cdot)w_{y_iy_j}(t,s,y)+\sum^d_{i=1}b_i(\cdot)w_{y_i}(t,s,y)+c(\cdot)w(t,s,y) \\
			\qquad\qquad\qquad 
			+\sum^d\limits_{i,j=1}\left(\frac{\partial a_{ij}(\cdot)}{\partial t}+\frac{\partial\overline{a}_{ij}(\cdot)}{\partial t}\right)\left(u-\widehat{u}\right)_{y_iy_j}(t,s,y) \\
			\qquad\qquad\qquad+\sum^d\limits_{i=1}\left(\frac{\partial b_{i}(\cdot)}{\partial t}+\frac{\partial\overline{b}_i(\cdot)}{\partial t}\right)\left(u-\widehat{u}\right)_{y_i}(t,s,y) \\
			\qquad\qquad\qquad -\sum^d\limits_{i,j=1}\frac{\partial\overline{a}_{ij}(\cdot)}{\partial t}\int^t_s \left(v-\widehat{v}\right)_{y_iy_j}(\theta,s,y)d\theta -\sum^d\limits_{i=1}\frac{\partial\overline{b}_i(\cdot)}{\partial t}\int^t_s \left(v-\widehat{v}\right)_{y_i}(\theta,s,y)d\theta \\
			\qquad\qquad\qquad+\left(c_t(\cdot)+\overline{c}_t(\cdot)\right)\left(u-\widehat{u}\right)(t,s,y)-\overline{c}_t(\cdot)\int^t_s \left(v-\widehat{v}\right)(\theta,s,y)d\theta, \\
			
			\left(u-\widehat{u},w\right)(t,0,y)=\left(0,0\right), \quad 0\leq s\leq t\leq \delta, \quad y\in\mathbb{R}^d.  
		\end{array}
		\right.
	\end{equation} 
	
	By the classical theory of parabolic PDEs \cite{Friedman1964,Ladyzhanskaya1968,Lunardi1995}, for a fix $t\in[0,\delta]$, we have the following estimates for the solution to the first equation of \eqref{Difference between v to v hat}: 
	\begin{equation} \label{estimates of (u-u hat)(t,s,y)} 
		\begin{split}
			&\left|\left(u-\widehat{u}\right)(t,\cdot,\cdot)\right|^{(2+\alpha)}_{[0,t]\times\mathbb{R}^d} \\
			\leq &~ c\left(\sum^d_{i,j=1}\left|\int^t_\cdot\left(v-\widehat{v}\right)_{y_iy_j}(\theta,\cdot,\cdot)d\theta\right|^{(\alpha)}_{[0,t]\times\mathbb{R}^d}+\sum^d_{i=1}\left|\int^t_\cdot\left(v-\widehat{v}\right)_{y_i}(\theta,\cdot,\cdot)d\theta\right|^{(\alpha)}_{[0,t]\times\mathbb{R}^d}\right. \\
			&\qquad 
			\left.+\left|\int^t_\cdot\left(v-\widehat{v}\right)(\theta,\cdot,\cdot)d\theta\right|^{(\alpha)}_{[0,t]\times\mathbb{R}^d}\right), 
		\end{split}
	\end{equation}
	where the constant $c$ depends on $\lVert a\rVert^{(\alpha)}_{[0,T]}$, $\lVert \overline{a}\rVert^{(\alpha)}_{[0,T]}$, $\lVert b\rVert^{(\alpha)}_{[0,T]}$, $\lVert \overline{b}\rVert^{(\alpha)}_{[0,T]}$, $\lVert c\rVert^{(\alpha)}_{[0,T]}$ and $\lVert \overline{c}\rVert^{(\alpha)}_{[0,T]}$. To further estimate the upper bound of \eqref{estimates of (u-u hat)(t,s,y)}, we investigate the H\"{o}lder continuity of $\int^t_s \left(v-\widehat{v}\right)_{y_iy_j}(\theta,s,y)d\theta$ with respect to $s$ and $y$ for any $y_i$ and $y_j$, while the similar analyses can be conducted for other two integral terms in \eqref{estimates of (u-u hat)(t,s,y)}.
	
	Let $\delta\leq 1$ and $0\leq s\leq s^\prime\leq t\leq \delta\leq T$, we have 
	\begin{equation} \label{Holder estimate with respect to s}
		\begin{split}
			&\left|\int^t_s \left(v-\widehat{v}\right)_{y_iy_j}(\theta,s,y)d\theta-\int^t_{s^\prime}\left(v-\widehat{v}\right)_{y_iy_j}(\theta,s^\prime,y)d\theta\right| \\
			\leq&\left|\int^t_s \left(v-\widehat{v}\right)_{y_iy_j}(\theta,s,y)d\theta-\int^t_{s^\prime} \left(v-\widehat{v}\right)_{y_iy_j}(\theta,s,y)d\theta\right| \\
			&\qquad\qquad\qquad+\left|\int^t_{s^\prime} \left(v-\widehat{v}\right)_{y_iy_j}(\theta,s,y)d\theta-\int^t_{s^\prime} \left(v-\widehat{v}\right)_{y_iy_j}(\theta,s^\prime,y)d\theta\right| \\ 
			\leq& \int^{s^\prime}_s\left|\left(v-\widehat{v}\right)_{y_iy_j}(\theta,s,y)\right|d\theta+\int^t_{s^\prime}\left|\left(v-\widehat{v}\right)_{y_iy_j}(\theta,s,y)-\left(v-\widehat{v}\right)_{y_iy_j}(\theta,s^\prime,y)\right|d\theta \\
			\leq&\left(\sup\limits_{t\in[0,\delta]}\left\{\left|\left(v-\widehat{v}\right)(t,\cdot,\cdot)\right|^{(2+\alpha)}_{[0,t]\times\mathbb{R}^d}\right\}(s^\prime-s)^{1-\frac{\alpha}{2}}\right. \\
			&\qquad\qquad\qquad\left.+\sup\limits_{t\in[0,\delta]}\left\{\left|\left(v-\widehat{v}\right)(t,\cdot,\cdot)\right|^{(2+\alpha)}_{[0,t]\times\mathbb{R}^d}\right\}(t-s^\prime)\right)\cdot\left|s^\prime-s\right|^{\frac{\alpha}{2}} \\
			\leq&\sup\limits_{t\in[0,\delta]}\left\{\left|\left(v-\widehat{v}\right)(t,\cdot,\cdot)\right|^{(2+\alpha)}_{[0,t]\times\mathbb{R}^d}\right\}\delta^{1-\frac{\alpha}{2}}\cdot\left|s^\prime-s\right|^{\frac{\alpha}{2}} 
		\end{split}
	\end{equation} 
	which implies also, by noting $\left(v-\widehat{v}\right)(\cdot,0,\cdot)\equiv 0$, that
	\begin{equation} \label{Boundness of v-vhat}
		\left|\int^t_\cdot \left(v-\widehat{v}\right)_{y_iy_j}(\theta,\cdot,\cdot)d\theta\right|^\infty_{[0,t]\times\mathbb{R}^d}\leq\sup\limits_{t\in[0,\delta]}\left\{\left|\left(v-\widehat{v}\right)(t,\cdot,\cdot)\right|^{(2+\alpha)}_{[0,t]\times\mathbb{R}^d}\right\}\delta 
	\end{equation} 
	
	Moreover, let $y$, $y^\prime\in\mathbb{R}^d$, then we have
	\begin{equation} \label{Holder estimate with respect to y}
		\begin{split}
			&\left|\int^t_s \left(v-\widehat{v}\right)_{y_iy_j}(\theta,s,y^\prime)d\theta-\int^t_s\left(v-\widehat{v}\right)_{y_iy_j}(\theta,s,y)d\theta\right| \\
			\leq&\int^t_s\left|\left(v-\widehat{v}\right)_{y_iy_j}(\theta,s,y^\prime)-\left(v-\widehat{v}\right)_{y_iy_j}(\theta,s,y)\right|d\theta \\
			\leq& \sup\limits_{t\in[0,\delta]}\left\{\left|\left(v-\widehat{v}\right)(t,\cdot,\cdot)\right|^{(2+\alpha)}_{[0,t]\times\mathbb{R}^d}\right\}(t-s)\cdot\left|y^\prime-y\right|^\alpha \\
			\leq& \sup\limits_{t\in[0,\delta]}\left\{\left|\left(v-\widehat{v}\right)(t,\cdot,\cdot)\right|^{(2+\alpha)}_{[0,t]\times\mathbb{R}^d}\right\}\delta\cdot\left|y^\prime-y\right|^\alpha. 
		\end{split}
	\end{equation}
	
	Consequently, from \eqref{Holder estimate with respect to s}, \eqref{Boundness of v-vhat}, and \eqref{Holder estimate with respect to y}, we have 
	\begin{equation*}
		\left|\int^t_\cdot\left(v-\widehat{v}\right)_{y_iy_j}(\theta,\cdot,\cdot)d\theta\right|^{(\alpha)}_{[0,t]\times\mathbb{R}^d}\leq \delta^{1-\frac{\alpha}{2}}\cdot\sup\limits_{t\in[0,\delta]}\left\{\left|\left(v-\widehat{v}\right)(t,\cdot,\cdot)\right|^{(2+\alpha)}_{[0,t]\times\mathbb{R}^d}\right\}
	\end{equation*}
	
	Similarly, we can also estimate other lower-order terms of \eqref{estimates of (u-u hat)(t,s,y)}. Then, 
	\begin{equation} \label{Holder estimates of (u-u hat)(t,s,y)} 
		\left|\left(u-\widehat{u}\right)(t,\cdot,\cdot)\right|^{(2+\alpha)}_{[0,t]\times\mathbb{R}^d}\leq C_1\delta^{1-\frac{\alpha}{2}}\cdot\sup\limits_{t\in[0,\delta]}\left\{\left|\left(v-\widehat{v}\right)(t,\cdot,\cdot)\right|^{(2+\alpha)}_{[0,t]\times\mathbb{R}^d}\right\}. 
	\end{equation} 
	
	By the classical theory of parabolic PDEs and \eqref{Holder estimates of (u-u hat)(t,s,y)}, we can also acquire the inequality from the second equation of \eqref{Difference between v to v hat}:   
	\begin{equation*}  
		\left|w(t,\cdot,\cdot)\right|^{(2+\alpha)}_{[0,t]\times\mathbb{R}^d}\leq C_2\delta^{1-\frac{\alpha}{2}}\cdot\sup\limits_{t\in[0,\delta]}\left\{\left|\left(v-\widehat{v}\right)(t,\cdot,\cdot)\right|^{(2+\alpha)}_{[0,t]\times\mathbb{R}^d}\right\}. 
	\end{equation*} 
	
	Consequently, under the norm $[\phi]^{(2+\alpha)}_{[0,\delta]}:=\sup\limits_{t\in[0,\delta]}\left\{|\phi(t,\cdot,\cdot)|^{(2+\alpha)}_{[0,t]\times\mathbb{R}^d}\right\}$, choosing a suitably small $\delta$ yields that 
	\begin{equation*} \label{Contraction of Gamma} 
		\left[\Gamma(v)-\Gamma(\widehat{v})\right]^{(2+\alpha)}_{[0,\delta]} =\sup\limits_{t\in[0,\delta]}\left\{\left|w(t,\cdot,\cdot)\right|^{(2+\alpha)}_{[0,t]\times\mathbb{R}^d}\right\}\leq \frac{1}{2}\left[ v-\widehat{v}\right]^{(2+\alpha)}_{[0,\delta]}.
	\end{equation*} 
	
	\noindent (\textbf{A contraction $\Gamma$ mapping $\mathcal{V}$ into itself.}) On the other hand, we also have 
	\begin{equation*} 
		\begin{split}
			\left[\Gamma(v)-g_t\right]^{(2+\alpha)}_{[0,\delta]} &\leq\left[\Gamma(v)-\Gamma(g_t)\right]^{(2+\alpha)}_{[0,\delta]}+\left[\Gamma(g_t)-g_t\right]^{(2+\alpha)}_{[0,\delta]} \\
			&\leq \frac{1}{2}\left[ v-g_t\right]^{(2+\alpha)}_{[0,\delta]}+C^\prime\leq \frac{R}{2}+C^\prime
		\end{split}
	\end{equation*} 
	Therefore, for a suitably large $R$, $\Gamma$ is a contraction mapping $\mathcal{V}$ into itself and thus it has
	a unique fixed point $v$ in $\mathcal{V}$ such that $\Gamma(v)=v$. \\
	
	\noindent (\textbf{Uniqueness}) To complete the proof, we ought to show that the conservative vector field $v$ is the unique fixed point of \eqref{Contraction from v to V} in $\Theta^{(2+\alpha)}_{[0,\delta]}$. It can be done with some standard arguments. If \eqref{Contraction from v to V} admits two fixed points $v^1$ and $v^2$, let
	$$
	t_0=\sup\left\{t\in[0,\delta]:~v^1(t,s,y)=v^2(t,s,y),~(t,s,y)\in\Delta[0,t]\times\mathbb{R}^d\right\}.
	$$
	We shall focus only on the case when $t_0<\delta$ because if $t_0=\delta$, then $v^1=v^2$ in the whole $\Delta[0,\delta]\times\mathbb{R}^d$ and the proof is completed. 
	
	According to the definition of $t_0(<\delta)$, we know that $v^1(t,s,y)=v^2(t,s,y)$ in $\Delta_1\times\mathbb{R}^d$ in Figure \ref{fig:uniqueness}. Furthermore, their corresponding $u^1(t,s,y)$ and $u^2(t,s,y)$ are also equal in $\Delta_1\times\mathbb{R}^d$ according to the first equation of \eqref{Contraction from v to V}. Hence, we obtain diagonal conditions, namely $u^1(s,s,y)=u^2(s,s,y)$, $u^1_y(s,s,y)=u^2_y(s,s,y)$ and $u^1_{yy}(s,s,y)=u^2_{yy}(s,s,y)$ for any $s\in[0,t_0]$ and $y\in\mathbb{R}^d$. By observing \eqref{Nonlocal linear PDE with t} and \eqref{Differentiate Nonlocal linear PDE with t with respect to t} provided that the same initial and diagonal conditions, i.e. the initial condition 1 and the diagonal condition in Figure \ref{fig:uniqueness}, the classical PDE theory promises that $(u^1,v^1)$ and $(u^2,v^2)$ coincide in $\left(R\cup\Delta_1\right)\times\mathbb{R}^d$.
	\begin{figure}[!ht]
		\centering
		\includegraphics[width=0.6\textwidth]{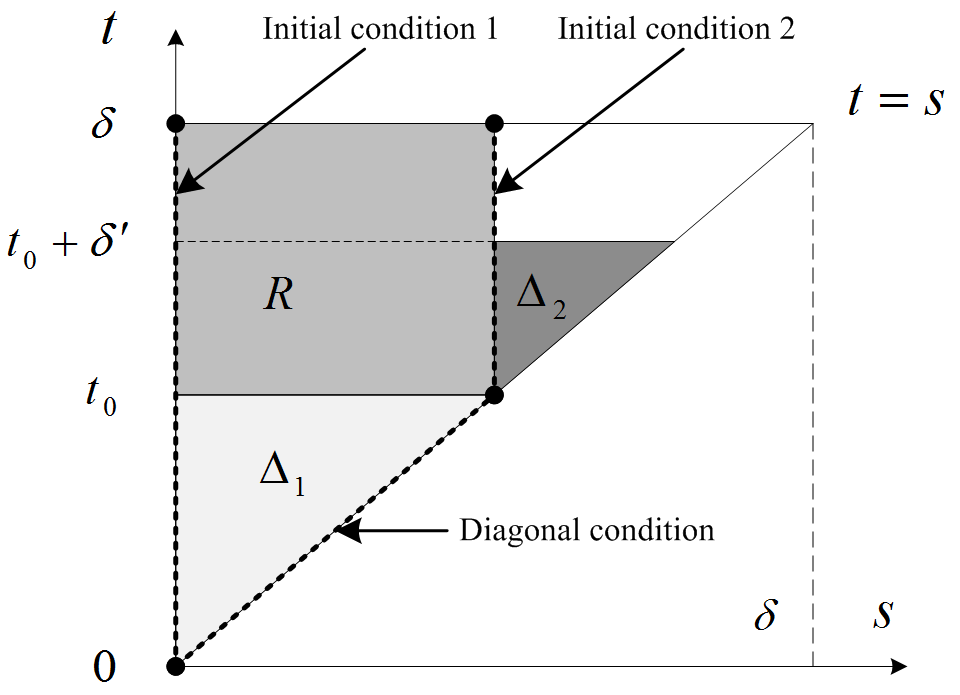}
		\caption{Uniqueness of the solution in $\Theta[0,\delta]$}
		\label{fig:uniqueness}
	\end{figure}
	
	Next, let $(u^1(t,t_0,y),v^1(t,t_0,y))=(u^2(t,t_0,y),v^2(t,t_0,y))=(g^\prime(t,y),g^\prime_t(t,y))$. Based on the new initial condition, i.e. the initial condition 2 in Figure \ref{fig:uniqueness}, we consider the following initial value problem: 
	\begin{equation} \label{Nonlocal linear PDE system with a new initial value condition}   
		\left\{
		\begin{array}{lr}
			u_s(t,s,y)=\left(a(\cdot)+\overline{a}(\cdot)\right)u_{yy}(t,s,y)+\left(b(\cdot)+\overline{b}(\cdot)\right)u_y(t,s,y) \\
			\qquad\qquad\qquad -\overline{a}(\cdot)\int^t_s v_{yy}(\theta,s,y)d\theta -\overline{b}(\cdot)\int^t_s v_y(\theta,s,y)d\theta \\
			\qquad\qquad\qquad +\left(c(\cdot)+\overline{c}(\cdot)\right)u(t,s,y)-\overline{c}(\cdot)\int^t_s v(\theta,s,y)d\theta+f(t,s,y), \\
			
			v_s(t,s,y)=a(\cdot)v_{yy}(t,s,y)+b(\cdot)v_y(t,s,y)+c(\cdot)v(t,s,y) \\
			\qquad\qquad\qquad +\left(a_t(\cdot)+\overline{a}_t(\cdot)\right)u_{yy}(t,s,y)+\left(b_t(\cdot)+\overline{b}_t(\cdot)\right)u_{y}(t,s,y) \\
			\qquad\qquad\qquad -\overline{a}_t(\cdot)\int^t_s v_{yy}(\theta,s,y)d\theta -\overline{b}_t(\cdot)\int^t_s v_y(\theta,s,y)d\theta \\
			\qquad\qquad\qquad+\left(c_t(\cdot)+\overline{c}_t(\cdot)\right)u(t,s,y)-\overline{c}_t(\cdot)\int^t_s v(\theta,s,y)d\theta+f_t(t,s,y), \\
			
			\left(u,v\right)(t,t_0,y)=\left(g^\prime,g^\prime_t\right)(t,y), \quad t_0\leq s\leq t\leq T, \quad y\in\mathbb{R}^d.  
		\end{array}
		\right.
	\end{equation} 
	Our previous proof shows that \eqref{Nonlocal linear PDE system with a new initial value condition} admits a unique $v$ in the set $$\mathcal{V}^\prime=\left\{v\in\Theta^{(2+\alpha)}_{[t_0,t_0+\delta^\prime]}:v(t,t_0,y)=g^\prime_t(t,y),[v-g^\prime]^{(2+\alpha)}_{[t_0,t_0+\delta^\prime]}\leq R^\prime\right\}$$ 
	provided that $R^\prime$ is large enough and $\delta^\prime$ is small enough. Considering $R^\prime$ larger than $[v^1-g^\prime]^{(2+\alpha)}_{[t_0,t_0+\delta^\prime]}$ and $[v^2-g^\prime]^{(2+\alpha)}_{[t_0,t_0+\delta^\prime]}$, we have $v^1=v^2$ in $\Delta_2\times\mathbb{R}^d$. Hence, for any $y\in\mathbb{R}^d$, $v^1$ equals to $v^2$ in $\{(t,s):t_0\leq t\leq t_0+\delta^\prime,0\leq s\leq t_0])\}\cup\Delta_2$. This contradicts the definition of $t_0$. Consequently, $t_0=\delta$ and $v^1=v^2$. 
	
	Finally, the unique fixed point $v$ determines uniquely a function $u(t,s,y)$ via \eqref{Contraction from v to V}. Moreover, given $v\in\Theta^{{(2+\alpha)}}_{[0,\delta]}$, it is clear that   
	\begin{equation*}
		[u]^{(2+\alpha)}_{[0,\delta]\times\mathbb{R}^d}\leq c\left([v]^{(2+\alpha)}_{[0,\delta]\times\mathbb{R}^d}+[f]^{(\alpha)}_{[0,\delta]\times\mathbb{R}^d}+[g]^{(2+\alpha)}_{[0,\delta]\times\mathbb{R}^d}\right)<\infty. 
	\end{equation*}
	Therefore, there exists a unique solution pair $(u,v)\in\Theta^{{(2+\alpha)}}_{[0,\delta]}\times\Theta^{{(2+\alpha)}}_{[0,\delta]}$ to \eqref{Nonlocal linear PDE system with t} in $\Delta[0,\delta]\times\mathbb{R}^d$.
\end{proof}

\begin{remark}[\textbf{Extension of solutions to a larger time interval}] \label{Extension of solutions to a larger time interval}
	Until now, we have proven the local existence of $(u,v)$ in $R_1=\{0\le s\le t\le \delta\}$ in Figure \ref{fig:extension}.
	\begin{figure}[!ht]
		\centering
		\includegraphics[width=0.5\textwidth]{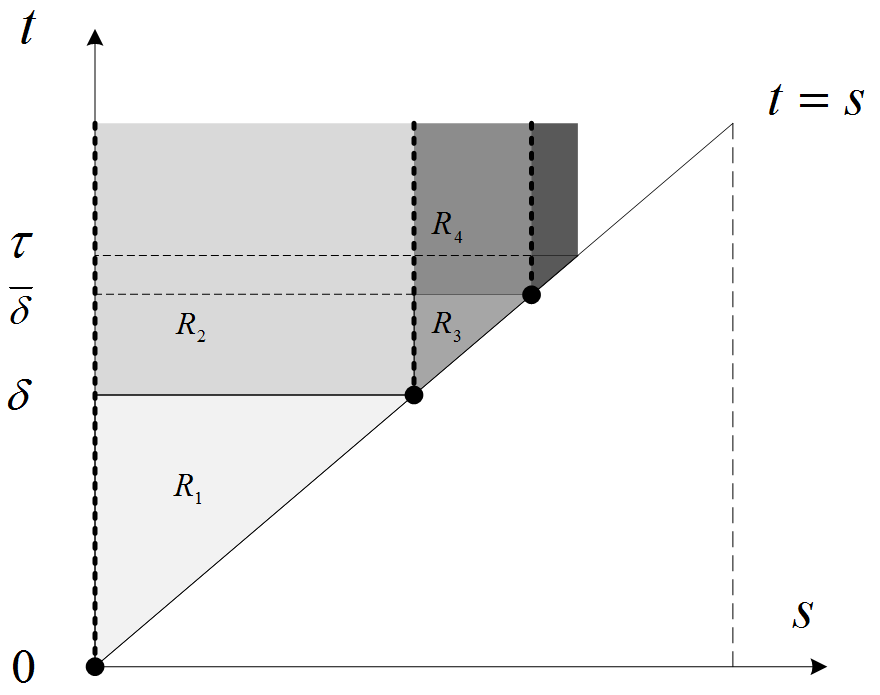}
		\caption{Extension from $\Delta[0,\delta]$ to a larger time interval}
		\label{fig:extension}
	\end{figure}
	Hence, we can determine the diagonal condition for $s\in[0,\delta]$. Then the nonlocal equations \eqref{Nonlocal linear PDE with t} and \eqref{Differentiate Nonlocal linear PDE with t with respect to t} reduce to classical PDEs with a parameter $t$. Therefore, we can extend uniquely our solution $(u,v)$ from $R_1$ to $R_1\cup R_2=\{0\le s\le \delta,~s\le t\le T\}$ in Figure \ref{fig:extension}. Subsequently, we acquire a new initial condition at $s=\delta$ for $t\in[s,T]$. Taking $\delta$ as an initial time and $(u(t,\delta,y),v(t,\delta,y))$ as initial datum, one can extend the solution to a larger time intervals $R_1\cup R_2\cup R_3$ and then $R_1\cup R_2\cup R_3\cup R_4$ as illustrated in Figure \ref{fig:extension}. Hence, we can extend uniquely the solution from $\Delta[0,\delta]$ to $\Delta[0,\overline{\delta}]$. The procedure could be repeated indefinitely, up to a \textit{maximally defined solution} $(u,v):\Delta[0,\sigma]\times\mathbb{R}^d\to\mathbb{R}$, belonging to $\Theta^{(2+\alpha)}_{[0,\sigma]}\times\Theta^{(2+\alpha)}_{[0,\sigma]}$ for any $\sigma<\tau$. The time region $\Delta[0,\tau]$ is maximal in the sense that if $\tau<\infty$, then there does not exist any solution of \eqref{Nonlocal linear PDE system with t} belonging to $\Theta^{(2+\alpha)}_{[0,\tau]}\times\Theta^{(2+\alpha)}_{[0,\tau]}$.
\end{remark} 

Next, for the nonlocal system \eqref{Nonlocal linear PDE system with t} of $(u,v)$ and a small enough $\delta\in[0,T]$, we give a Schauder-type estimate. It not only shows the stability of the solutions to \eqref{Nonlocal linear PDE with t} with respect to the data $(f,g)$, but also establishes a foundation for the further analysis of nonlocal fully nonlinear equations in the next section.

\begin{corollary}\label{Schauder estimates}
	Suppose that all coefficient functions and $f$ of \eqref{Nonlocal linear PDE with t} belong to $\Omega^{{(\alpha)}}_{[0,T]}$ and assume that $g\in\Omega^{{(2+\alpha)}}_{[0,T]}$. Then there exist $\delta>0$ and a unique $u\in\Omega^{{(2+\alpha)}}_{[0,\delta]}$ satisfying \eqref{Nonlocal linear PDE with t} in $\Delta[0,\delta]\times\mathbb{R}^d$. Furthermore, the following holds 
	\begin{equation} \label{Estimates of solutions of nonlocal linear PDE} 
		\begin{split}
			&\sup\limits_{t\in[0,\delta]}\left\{\left|u(t,\cdot,\cdot)\right|^{(2+\alpha)}_{[0,t]\times\mathbb{R}^d}+\left|u_t(t,\cdot,\cdot)\right|^{(2+\alpha)}_{[0,t]\times\mathbb{R}^d}\right\} \\
			&\leq c\left(\sup\limits_{t\in[0,\delta]}\left\{\left|f(t,\cdot,\cdot)\right|^{(\alpha)}_{[0,t]\times\mathbb{R}^d}+\left|f_t(t,\cdot,\cdot)\right|^{(\alpha)}_{[0,t]\times\mathbb{R}^d}\right\}+\sup\limits_{t\in[0,\delta]}\left\{\left|g(t,\cdot)\right|^{(2+\alpha)}_{\mathbb{R}^d}+\left|g_t(t,\cdot)\right|^{(2+\alpha)}_{\mathbb{R}^d}\right\}\right)
		\end{split}
	\end{equation}
	which implies that 
	\begin{equation} \label{Estimates of solutions of nonlocal PDEs} 
		\lVert u\rVert^{(2+\alpha)}_{[0,\delta]}\leq c\left(\lVert f\rVert^{(\alpha)}_{[0,\delta]}+\lVert g\rVert^{(2+\alpha)}_{[0,\delta]}\right). 
	\end{equation}
\end{corollary}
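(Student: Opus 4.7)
The plan is to obtain Corollary \ref{Schauder estimates} as a quantitative refinement of Theorem \ref{Well-posedness of (u,u_t)}, built on top of the same contraction-mapping machinery. For existence and uniqueness of $u\in\Omega^{(2+\alpha)}_{[0,\delta]}$, I would simply concatenate Theorem \ref{Well-posedness of (u,u_t)} with Lemma \ref{Equivalence between equation and system}: the theorem gives a unique $(u,v)\in\Theta^{(2+\alpha)}_{[0,\delta]}\times\Theta^{(2+\alpha)}_{[0,\delta]}$ solving the system \eqref{Nonlocal linear PDE system with t}, the lemma identifies $v$ with $u_t$, and the two memberships together mean exactly that $u\in\Omega^{(2+\alpha)}_{[0,\delta]}$ solves \eqref{Nonlocal linear PDE with t}. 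So the only genuine content of the corollary is the estimate \eqref{Estimates of solutions of nonlocal linear PDE}; the bound \eqref{Estimates of solutions of nonlocal PDEs} is then just its restatement in the $\Vert\cdot\Vert$-norm.

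For the estimate, I would freeze $t\in[0,\delta]$ and treat each equation of \eqref{Nonlocal linear PDE system with t} as a \emph{standard} linear parabolic PDE in $(s,y)$, with inhomogeneities formed by $f$, $g$, the zero-th order term in $u$ (or $v$), and the three integral terms $\int_s^t v(\theta,s,y)\,d\theta$, $\int_s^t v_{y_i}(\theta,s,y)\,d\theta$, $\int_s^t v_{y_iy_j}(\theta,s,y)\,d\theta$. The classical Schauder estimates (\cite{Friedman1964,Ladyzhanskaya1968,Lunardi1995}) applied to the first equation yield
\begin{equation*}
|u(t,\cdot,\cdot)|^{(2+\alpha)}_{[0,t]\times\mathbb{R}^d}\leq c\Bigl(|f(t,\cdot,\cdot)|^{(\alpha)}_{[0,t]\times\mathbb{R}^d}+|g(t,\cdot)|^{(2+\alpha)}_{\mathbb{R}^d}+\mathcal{I}(v;t)\Bigr),
\end{equation*}
where $\mathcal{I}(v;t)$ denotes the $C^{\alpha/2,\alpha}$-norms of the three integral terms. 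A structurally identical bound holds for the second equation with $f,g$ replaced by $f_t,g_t$ and an extra contribution $c\,|u(t,\cdot,\cdot)|^{(2+\alpha)}_{[0,t]\times\mathbb{R}^d}$ coming from the terms in which $\partial_t$ hits the coefficients.

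The decisive input, already isolated in the proof of Theorem \ref{Well-posedness of (u,u_t)}, is that the computations \eqref{Holder estimate with respect to s}, \eqref{Boundness of v-vhat}, and \eqref{Holder estimate with respect to y}, now applied to $v$ itself after subtracting its boundary value $g_t$ to use $(v-g_t)(t,0,y)=0$, produce the smallness
\begin{equation*}
\mathcal{I}(v;t)\leq C\,\delta^{1-\alpha/2}\,[v]^{(2+\alpha)}_{[0,\delta]}+C'\Vert g\Vert^{(2+\alpha)}_{[0,\delta]}.
\end{equation*}
Taking the supremum over $t\in[0,\delta]$ in each of the two Schauder bounds and summing gives an inequality of the form
\begin{equation*}
[u]^{(2+\alpha)}_{[0,\delta]}+[v]^{(2+\alpha)}_{[0,\delta]}\leq c\Bigl(\Vert f\Vert^{(\alpha)}_{[0,\delta]}+\Vert g\Vert^{(2+\alpha)}_{[0,\delta]}\Bigr)+C\delta^{1-\alpha/2}\Bigl([u]^{(2+\alpha)}_{[0,\delta]}+[v]^{(2+\alpha)}_{[0,\delta]}\Bigr).
\end{equation*}
Shrinking $\delta$ so that $C\delta^{1-\alpha/2}\leq 1/2$, which is compatible with the smallness already demanded in Theorem \ref{Well-posedness of (u,u_t)}, absorbs the last term and produces \eqref{Estimates of solutions of nonlocal linear PDE}; invoking $v=u_t$ converts this into \eqref{Estimates of solutions of nonlocal PDEs}.

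The main obstacle is essentially bookkeeping rather than new analysis: because $u$ and $v$ are coupled through six integral terms and three zero-th order terms, one has to make sure that every such term is controlled in the parabolic H\"older norm by the same $\delta^{1-\alpha/2}$-gain mechanism and that the constants depend only on $T$ and the $\Omega^{(\alpha)}_{[0,T]}$-norms of the coefficients, so that a \emph{single} $\delta$ suffices for the absorption step uniformly on the right-hand side.
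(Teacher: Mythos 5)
Your proposal follows essentially the same route as the paper's own proof: (i) existence, uniqueness, and membership $u\in\Omega^{(2+\alpha)}_{[0,\delta]}$ come from concatenating Theorem~\ref{Well-posedness of (u,u_t)} with Lemma~\ref{Equivalence between equation and system}; (ii) for each frozen $t$ you apply the classical Schauder estimate to the two equations of \eqref{Nonlocal linear PDE system with t}, using the $\delta^{1-\alpha/2}$-gain of the integral coupling terms isolated in the proof of Theorem~\ref{Well-posedness of (u,u_t)}; (iii) you absorb for $\delta$ small. That is exactly the paper's argument. Two remarks on the details, the second of which is a genuine (though fixable) gap.

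First, subtracting the boundary value $g_t$ so that $(v-g_t)(t,0,y)=0$ is unnecessary: the computations \eqref{Holder estimate with respect to s}--\eqref{Holder estimate with respect to y} produce the $\delta^{1-\alpha/2}$-gain without any vanishing hypothesis, and the sup-norm control is already trivial, $\bigl|\int_s^t v_{y_iy_j}(\theta,s,y)\,d\theta\bigr|\leq (t-s)\sup|v_{y_iy_j}|\leq \delta\,[v]^{(2+\alpha)}_{[0,\delta]}$. So $\mathcal{I}(v;t)\leq C\,\delta^{1-\alpha/2}[v]^{(2+\alpha)}_{[0,\delta]}$ holds cleanly without the extra $C'\Vert g\Vert^{(2+\alpha)}_{[0,\delta]}$ term.

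Second, the step ``summing the two Schauder bounds gives $[u]+[v]\leq c(\Vert f\Vert+\Vert g\Vert)+C\delta^{1-\alpha/2}([u]+[v])$'' is incorrect as stated. The $v$-equation contributes a zero-order coupling term $c\,|u(t,\cdot,\cdot)|^{(2+\alpha)}_{[0,t]\times\mathbb{R}^d}$ coming from the $\partial_t$-differentiated coefficients, and this term carries no $\delta$-factor; since the Schauder constant is not small, naively adding the two inequalities leaves an unabsorbed $c\,[u]$ on the right. You must first substitute the $u$-bound $[u]^{(2+\alpha)}_{[0,\delta]}\leq c_1\bigl(\delta^{1-\alpha/2}[v]^{(2+\alpha)}_{[0,\delta]}+\Vert f\Vert^{(\alpha)}_{[0,\delta]}+\Vert g\Vert^{(2+\alpha)}_{[0,\delta]}\bigr)$ into the coupling term of the $v$-bound, which converts $c\,[u]$ into $c\,c_1\,\delta^{1-\alpha/2}[v]$ plus data terms; only then does shrinking $\delta$ absorb everything. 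This substitution is precisely what the paper performs implicitly when passing from its two displayed inequalities to the combined estimate ``for a small enough $\delta$.'' With that correction your proof matches the paper's.
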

\begin{proof}
	As Lemma \ref{Equivalence between equation and system} shows, the first component $u$ of the unique solution $(u,v)$ of \eqref{Nonlocal linear PDE system with t} solves the nonlocal linear equation \eqref{Nonlocal linear PDE with t}. By noting of $v=u_t$ in \eqref{Nonlocal linear PDE system with t}, it is clear that $u\in\Omega^{(2+\alpha)}_{[0,\delta]}$ thanks to $(u,u_t)\in\Theta^{{(2+\alpha)}}_{[0,\delta]}\times\Theta^{{(2+\alpha)}}_{[0,\delta]}$. Moreover, by \eqref{Nonlocal linear PDE system with t}, we have 
	\begin{equation*}
		\begin{split}
			& \left|u(t,\cdot,\cdot)\right|^{(2+\alpha)}_{[0,t]\times\mathbb{R}^d} \\
			\leq &~ c\left(\delta^{1-\frac{\alpha}{2}}\sup\limits_{t\in[0,\delta]}\left|u_t(t,\cdot,\cdot)\right|^{(2+\alpha)}_{[0,t]\times\mathbb{R}^d}+\sup\limits_{t\in[0,\delta]}\left|f(t,\cdot,\cdot)\right|^{(\alpha)}_{[0,t]\times\mathbb{R}^d}+\sup\limits_{t\in[0,\delta]}\left|g(t,\cdot)\right|^{(2+\alpha)}_{\mathbb{R}^d}\right)
		\end{split}
	\end{equation*}
	and 
	\begin{equation*}
		\begin{split}
			\left|u_t(t,\cdot,\cdot)\right|^{(2+\alpha)}_{[0,t]\times\mathbb{R}^d}&\leq c\left(\left|u(t,\cdot,\cdot)\right|^{(2+\alpha)}_{[0,t]\times\mathbb{R}^d}+\delta^{1-\frac{\alpha}{2}}\sup\limits_{t\in[0,\delta]}\left|u_t(t,\cdot,\cdot)\right|^{(2+\alpha)}_{[0,t]\times\mathbb{R}^d}\right. \\
			&\qquad\left.+\sup\limits_{t\in[0,\delta]}\left|f_t(t,\cdot,\cdot)\right|^{(\alpha)}_{[0,t]\times\mathbb{R}^d}+\sup\limits_{t\in[0,\delta]}\left|g_t(t,\cdot)\right|^{(2+\alpha)}_{\mathbb{R}^d}\right).
		\end{split}
	\end{equation*}
	
	Consequently, for a small enough $\delta$, it holds 
	\begin{equation*}
		\begin{split}
			& \left|u(t,\cdot,\cdot)\right|^{(2+\alpha)}_{[0,t]\times\mathbb{R}^d}+\left|u_t(t,\cdot,\cdot)\right|^{(2+\alpha)}_{[0,t]\times\mathbb{R}^d}\leq  \sup\limits_{t\in[0,\delta]}\left|u(t,\cdot,\cdot)\right|^{(2+\alpha)}_{[0,t]\times\mathbb{R}^d}+\sup\limits_{t\in[0,\delta]}\left|u_t(t,\cdot,\cdot)\right|^{(2+\alpha)}_{[0,t]\times\mathbb{R}^d} \\
			\leq &~ c\left(\sup\limits_{t\in[0,\delta]}\left\{\left|f(t,\cdot,\cdot)\right|^{(\alpha)}_{[0,t]\times\mathbb{R}^d}+\left|f_t(t,\cdot,\cdot)\right|^{(\alpha)}_{[0,t]\times\mathbb{R}^d}\right\}+\sup\limits_{t\in[0,\delta]}\left\{\left|g(t,\cdot)\right|^{(2+\alpha)}_{\mathbb{R}^d}+\left|g_t(t,\cdot)\right|^{(2+\alpha)}_{\mathbb{R}^d}\right\}\right)
		\end{split}
	\end{equation*} 
	which leads to \eqref{Estimates of solutions of nonlocal linear PDE} and \eqref{Estimates of solutions of nonlocal PDEs}.
\end{proof}

\begin{remark}[\textbf{Stability of solutions with respect to data}]
	The inequalities in Corollary \ref{Schauder estimates} can be used to show that the map from data $(f,g)$ to solutions of \eqref{Nonlocal linear PDE with t} is continuous in the $\lVert\cdot\rVert^{(2+\alpha)}_{[0,\delta]}$-topology. Specifically, let $u$ and $\widehat{u}$ correspond to $(f,g)$ and $(\widehat{f},\widehat{g})$ satisfying these assumptions of Theorem \ref{Well-posedness of (u,u_t)}, respectively. Then, we have 
	\begin{equation*}
		\lVert u-\widehat{u}\rVert^{(2+\alpha)}_{[0,\delta]}\leq c\left(\lVert f-\widehat{f}\rVert^{(\alpha)}_{[0,\delta]}+\lVert g-\widehat{g}\rVert^{(2+\alpha)}_{[0,\delta]}\right). 
	\end{equation*}
\end{remark}

\ \ 


\section{Nonlocal Fully Nonlinear Parabolic PDEs} \label{sec:nonlinear}
After studying nonlocal linear PDEs \eqref{Nonlocal linear PDE with t} and acquiring a prior estimate of solutions \eqref{Estimates of solutions of nonlocal PDEs}, we further study nonlocal fully nonlinear PDEs: 
\begin{equation} \label{Nonlocal fully nonlinear equation in Section 3}  
	\left\{
	\begin{array}{rcl}
		u_s(t,s,y) & = & F\big(t,s,y,u(t,s,y),u_y(t,s,y),u_{yy}(t,s,y), \\
		& & \qquad \qquad u(s,s,y),u_y(s,s,y),u_{yy}(s,s,y)\big), \\
		u(t,0,y) & = & g(t,y),\quad 0\leq s\leq t\leq T,\quad y\in\mathbb{R}^d.
	\end{array}
	\right.
\end{equation} 
where the mapping $F$ is nonlinear with respect to all its arguments, $s$ and $y$ are temporal and spatial variables, respectively, while $t$ could be considered as an external temporal parameter, and the temporal variables $(s,t)$ are defined in a triangular region $\Delta[0,T]$.

To leverage the results of nonlocal linear PDEs in Section \ref{sec:linear}, we require certain regularity assumptions on $F$ and $g$. Generally speaking, we need that the initial condition $g\in\Omega^{(2+\alpha)}_{[0,T]}$ and the nonlinear $F$  maps the functions (about $u$) from $\Omega^{(2+\alpha)}_{[0,T]}$ to $\Omega^{(\alpha)}_{[0,T]}$. More specifically, we suppose that the real-valued function $\pi\mapsto F(\pi)$ is defined in $\Pi=\Delta[0,T]\times\mathbb{R}^d\times\mathbb{R}\times\mathbb{R}^d\times\mathbb{S}^d\times\mathbb{R}\times\mathbb{R}^d\times\mathbb{S}^d$ and satisfies that
\begin{enumerate}[label=(\roman*)]
	\item \textbf{(Ellipticity condition)} for any $\pi=(t,s,y,u,p,q,l,m,n)\in\Pi$ and $\xi=(\xi_1,\xi_2,\dots,\xi_n)\in\mathbb{R}^n\setminus\{0\}$,   
	\begin{eqnarray}
		\sum^d_{i,j=1}\frac{\partial F}{\partial q_{ij}}(t,s,y,u,p,q,l,m,n)\xi_i\xi_j & > & 0, \label{Uniform ellipticity condition of F 1} \\
		\sum^d_{i,j=1}\left(\frac{\partial F}{\partial q_{ij}}+\frac{\partial F}{\partial n_{ij}}\right)(t,s,y,u,p,q,l,m,n)\xi_i\xi_j & > & 0; \label{Uniform ellipticity condition of F 2} 
	\end{eqnarray} 
	\item \textbf{(H\"{o}lder continuity)} for every $\delta\geq 0$ and $\overline{\pi}=(u,p,q,l,m,n)$, there exists a positive constant $K>0$ such that
	\begin{equation} \label{Holder continuity of F}
		\sup_{(t,\overline{\pi})}\left\{\left| \mathcal{F}(t,\cdot,\cdot,\overline{\pi})\right|^{(\alpha)}_{[0,\delta]\times\mathbb{R}^d}\right\}=K;
	\end{equation}
	\item \textbf{(Lipschitz continuity)} for any $(t_1,s,y,\overline{\pi}_1),(t_2,s,y,\overline{\pi}_2)\in\Pi$, there exists a positive constant $L>0$ such that
	\begin{equation} \label{Lipschitz continuity of F} 
		|\mathcal{F}(t_1,s,y,\overline{\pi}_1)-\mathcal{F}(t_2,s,y,\overline{\pi}_2)|\leq L\left(|t_1-t_2|+|\overline{\pi}_1-\overline{\pi}_2|\right), 
	\end{equation}
\end{enumerate}
where $\mathcal{F}$ represents $F$ itself and some of its first- and second-order derivatives ($F_\mathcal{X}$ and $F_{\mathcal{X}\mathcal{Y}}$), which are denoted by ``$\surd$" in Table \ref{tab:table1} and Table \ref{tab:table2}. 

\begin{table}[!ht] 
	\centering
	\begin{tabular}{c| c c c c c c c c c}
		\hline
		$\mathcal{X}$ & $t$ & $s$ & $y$ & $u$ & $p$ & $q$ & $l$ & $m$ & $n$ \\ 
		\hline 
		$F_\mathcal{X}$ & $\surd$ & & & $\surd$ & $\surd$ & $\surd$ & $\surd$ & $\surd$ & $\surd$ \\ 
		\hline 
	\end{tabular}
	\caption{First-order derivatives of $F$ required to be H\"{o}lder and Lipschitz continuous}
	\label{tab:table1}
\end{table} 

\begin{table}[!ht] 
	\centering
	\begin{tabular}{c| c c c c c c c c c}
		\hline
		\diagbox{$\mathcal{X}$}{$F_{\mathcal{X}\mathcal{Y}}$}{$\mathcal{Y}$} & $t$ & $s$ & $y$ & $u$ & $p$ & $q$ & $l$ & $m$ & $n$\\ 
		\hline 
		$t$ &  &  &  & $\surd$ & $\surd$ & $\surd$ & $\surd$ & $\surd$ & $\surd$ \\ 
		$s$ &  &  &  &  &  &  &  &  & \\
		$y$ &  &  &  &  &  &  &  &  & \\
		$u$ & $\surd$ &  &  & $\surd$ & $\surd$ & $\surd$ & $\surd$ & $\surd$ & $\surd$ \\
		$p$ & $\surd$ &  &  & $\surd$ & $\surd$ & $\surd$ & $\surd$ & $\surd$ & $\surd$ \\
		$q$ & $\surd$ &  &  & $\surd$ & $\surd$ & $\surd$ & $\surd$ & $\surd$ & $\surd$ \\
		$l$ & $\surd$ &  &  & $\surd$ & $\surd$ & $\surd$ &  &  &  \\
		$m$ & $\surd$ &  &  & $\surd$ & $\surd$ & $\surd$ &  &  &  \\
		$n$ & $\surd$ &  &  & $\surd$ & $\surd$ & $\surd$ &  &  &  \\
		\hline 
	\end{tabular}
	\caption{Second-order derivatives of $F$ required to be H\"{o}lder and Lipschitz continuous}
	\label{tab:table2}
\end{table} 

Before we present our main result, we stress that the standard linearization methods are not applicable for the nonlocal case. In the setting of local parabolic PDEs, \cite{Eidelman1969} introduced a so-called ``quasi-linearization method" and studied local existence for fully nonlinear parabolic problems by transforming fully nonlinear equations into quasi-linear systems. Furthermore, \cite{Khudyaev1963,Sopolov1970} utilized a variant of this method to investigate fully nonlinear PDEs. The linearization method, which we have used in Theorem \ref{Well-posedness of fully nonlinear equations} below, is substantially inspired by \cite{Kruzhkov1975,Lunardi1995}. Although there are some previous works on how to linearize nonlinear equations, it is still difficult to extend their methods from local to nonlocal cases, mainly due to the limited progress on studying the nonlocal linear PDEs \eqref{Nonlocal linear PDE with t}.

Next, we prove the well-posedness of nonlocal fully nonlinear PDEs \eqref{Nonlocal fully nonlinear equation in Section 3}.

\begin{theorem}\label{Well-posedness of fully nonlinear equations}
	Suppose that \eqref{Uniform ellipticity condition of F 1}-\eqref{Lipschitz continuity of F} hold and assume that $g\in\Omega^{{(2+\alpha)}}_{[0,T]}$. Then, there exist $\delta>0$ and a unique $u\in\Omega^{{(2+\alpha)}}_{[0,\delta]}$ satisfying \eqref{Nonlocal fully nonlinear equation in Section 3} in $\Delta[0,\delta]\times\mathbb{R}^d$. 
\end{theorem}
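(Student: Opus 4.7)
The plan is to reduce the fully nonlinear problem to a sequence of nonlocal \emph{linear} problems via linearization around a reference function and apply Banach's fixed-point theorem in $\Omega^{(2+\alpha)}_{[0,\delta]}$ for sufficiently small $\delta>0$, leveraging Theorem~\ref{Well-posedness of (u,u_t)} and the Schauder-type estimate of Corollary~\ref{Schauder estimates}. First I would fix a reference $u_0\in\Omega^{(2+\alpha)}_{[0,T]}$ matching the initial datum (for instance, the heat-equation solution with initial condition $g$) and define the nonlocal linear operator $\mathcal{L}$ by freezing the first-order partials of $F$ at the argument tuple $\pi_0(t,s,y)$ corresponding to $u_0$:
\begin{equation*}
\begin{split}
\mathcal{L}v &:= \sum_{i,j} F_{q_{ij}}(\pi_0)\,v_{y_iy_j}(t,s,y) + \sum_i F_{p_i}(\pi_0)\,v_{y_i}(t,s,y) + F_u(\pi_0)\,v(t,s,y) \\
&\quad + \sum_{i,j} F_{n_{ij}}(\pi_0)\,v_{y_iy_j}(s,s,y) + \sum_i F_{m_i}(\pi_0)\,v_{y_i}(s,s,y) + F_l(\pi_0)\,v(s,s,y).
\end{split}
\end{equation*}
The ellipticity hypotheses \eqref{Uniform ellipticity condition of F 1}--\eqref{Uniform ellipticity condition of F 2} and the regularity of the partials of $F$ collected in Tables~\ref{tab:table1}--\ref{tab:table2} guarantee that the coefficients lie in $\Omega^{(\alpha)}_{[0,T]}$ and satisfy \eqref{Uniform ellipticity condition 1}--\eqref{Uniform ellipticity condition 2}, so Theorem~\ref{Well-posedness of (u,u_t)} applies.

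Next, define the mapping $\Psi:u\mapsto w$ where $w$ is the unique solution of
\begin{equation*}
w_s = \mathcal{L}w + \bigl[F(t,s,y,u,u_y,u_{yy},u|_{t=s},u_y|_{t=s},u_{yy}|_{t=s}) - \mathcal{L}u\bigr], \quad w(t,0,y)=g(t,y),
\end{equation*}
posed on the closed ball $\mathcal{B}_R:=\{u\in\Omega^{(2+\alpha)}_{[0,\delta]}:\,u(t,0,y)=g(t,y),\ \lVert u-u_0\rVert^{(2+\alpha)}_{[0,\delta]}\leq R\}$. The H\"older hypothesis \eqref{Holder continuity of F} keeps the source in $\Omega^{(\alpha)}_{[0,\delta]}$, so $\Psi$ is well-defined on $\mathcal{B}_R$. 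Invariance $\Psi(\mathcal{B}_R)\subset\mathcal{B}_R$ is obtained by applying Corollary~\ref{Schauder estimates} to $\Psi(u)-u_0$: the source splits into a bounded residual $F(\cdot,u_0,\ldots)-(u_0)_s$ plus a Taylor remainder $F(\cdot,u,\ldots)-F(\cdot,u_0,\ldots)-\mathcal{L}(u-u_0)$ whose $\Omega^{(\alpha)}_{[0,\delta]}$-norm is $O(R^2)$ by the Lipschitz continuity \eqref{Lipschitz continuity of F} of $\nabla F$; choosing $R$ and then $\delta$ small enough closes the bound.

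For contraction, given $u_1,u_2\in\mathcal{B}_R$ the difference $\Psi(u_1)-\Psi(u_2)$ solves the linear problem with zero initial condition and source $F(\cdot,u_1,\ldots)-F(\cdot,u_2,\ldots)-\mathcal{L}(u_1-u_2)$. Writing this via the fundamental theorem of calculus as
\begin{equation*}
\int_0^1 \bigl[\nabla F(\pi_\tau) - \nabla F(\pi_0)\bigr]\,d\tau \cdot \bigl(\mathcal{X}(u_1) - \mathcal{X}(u_2)\bigr),
\end{equation*}
where $\pi_\tau$ interpolates between the argument tuples at $u_2$ and $u_1$ and $\mathcal{X}(u):=(u,u_y,u_{yy},u|_{t=s},u_y|_{t=s},u_{yy}|_{t=s})$, the Lipschitz continuity of $\nabla F$ (hypothesis \eqref{Lipschitz continuity of F} applied to the entries of Table~\ref{tab:table2}) bounds the source in $\Omega^{(\alpha)}_{[0,\delta]}$-norm by $C\,R\,\lVert u_1-u_2\rVert^{(2+\alpha)}_{[0,\delta]}$. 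Corollary~\ref{Schauder estimates} then yields $\lVert \Psi(u_1)-\Psi(u_2)\rVert^{(2+\alpha)}_{[0,\delta]}\leq cCR\,\lVert u_1-u_2\rVert^{(2+\alpha)}_{[0,\delta]}$, so for $R$ small enough $\Psi$ is a strict contraction on $\mathcal{B}_R$. Banach's theorem supplies a unique fixed point $u^*\in\mathcal{B}_R$ that solves \eqref{Nonlocal fully nonlinear equation in Section 3}; uniqueness in the whole $\Omega^{(2+\alpha)}_{[0,\delta]}$ follows by a connectedness argument mirroring the uniqueness step of Theorem~\ref{Well-posedness of (u,u_t)}.

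The principal obstacle is the contraction step. In contrast to local parabolic problems, where small $\delta$ automatically supplies contraction through $\delta^{1-\alpha/2}$ factors in the Hölder estimates, the diagonal term $u_{yy}(s,s,y)$ carries no analogous temporal decay, so shrinking $\delta$ alone does not help. Smallness must instead come from the second-order Taylor remainder in the expansion of $F$ around $u_0$, which forces the radius $R$ of the invariant ball to be small; this creates a tension with invariance, which needs $R$ large enough to absorb the residual $F(\cdot,u_0,\ldots)-(u_0)_s$. The resolution is to choose $R$ first (for the contraction constant) and then shrink $\delta$ so that, on the short interval $\Delta[0,\delta]$, both the residual and the $R^2$-remainder fit inside $\mathcal{B}_R$, exploiting the fact that the $\Omega^{(2+\alpha)}_{[0,\delta]}$-norm of $\Psi(u)-u_0$ (which has zero initial datum) becomes small as $\delta\to 0$.
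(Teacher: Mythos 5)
Your overall linearization-and-fixed-point framework matches the paper's, but there is a genuine conceptual error in how you locate the smallness that drives the contraction, and your workaround for the resulting (artificial) tension does not hold up.

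You assert that ``shrinking $\delta$ alone does not help'' because the diagonal term $u_{yy}(s,s,y)$ carries no temporal decay, and you conclude that the contraction constant is $cCR$, forcing $R$ small. This is not correct. The linearization residual
\begin{equation*}
\varphi = \int_0^1\bigl[\nabla F(\pi_\tau)-\nabla F(\pi_0)\bigr]\,d\tau\cdot\bigl(\mathcal{X}(u_1)-\mathcal{X}(u_2)\bigr)
\end{equation*}
is a product of two factors that \emph{both} vanish on the initial slice $s=0$: the second because $u_1(t,0,\cdot)=u_2(t,0,\cdot)=g(t,\cdot)$, and the first because $\pi_\tau(t,0,y)=\pi_0(t,0,y)$ (all three of $u_1,u_2,u_0$ share the initial datum $g$, and the entries of $\pi$ include the \emph{diagonal} values $u(s,s,y),u_y(s,s,y),u_{yy}(s,s,y)$, which at $s=0$ reduce to $g(0,y),g_y(0,y),g_{yy}(0,y)$). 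Crucially, the $\Omega^{(2+\alpha)}_{[0,\delta]}$-norm controls $u_t$ as well as $u$, so the diagonal map $s\mapsto u(s,s,y)$ is Lipschitz/H\"older in the full temporal variable; this is exactly where the tailor-made norm earns its keep. A product of two factors vanishing at $s=0$ has $|\cdot|^{(\alpha)}_{[0,t]\times\mathbb{R}^d}$-norm of order $\delta^{\alpha/2}$ times the product of their full norms (split the H\"older increment with the Leibniz-type inequality $[fg]^{(\alpha/2)}\le [f]^{(\alpha/2)}|g|^\infty+|f|^\infty[g]^{(\alpha/2)}$; in each term one factor contributes the $\delta^{\alpha/2}$). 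Carried through $\varphi$ and $\varphi_t$ and fed into Corollary~\ref{Schauder estimates}, this gives the contraction constant $C(R)\delta^{\alpha/2}$, which the paper obtains. Your $cCR$ bound is weaker and misses this refinement; it is precisely what creates the spurious need for $R$ small.

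Because of that, the workaround you propose for invariance does not close the gap. You want ``the $\Omega^{(2+\alpha)}_{[0,\delta]}$-norm of $\Psi(u)-u_0$ (zero initial datum) becomes small as $\delta\to 0$.'' That is not true for parabolic Schauder estimates: with zero initial data, the sup-norm part of the solution scales like $\delta$, but the H\"older seminorms of $w_{yy}$ and $w_s$ are controlled by the H\"older seminorms of the source with a constant independent of $\delta$, and the source (the residual $F(\cdot,u_0,\ldots)-(u_0)_s$, which with your heat-equation choice of $u_0$ does not even vanish at $s=0$) keeps an $O(1)$ seminorm on $[0,\delta]$. So the invariance estimate $\lVert\Psi(u)-u_0\rVert^{(2+\alpha)}_{[0,\delta]}\le R$ cannot be forced by shrinking $\delta$ once $R$ has been fixed small. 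The paper avoids this circularity entirely: with the $\delta^{\alpha/2}$ contraction factor, one first picks $R$ \emph{large} so that $\lVert\Lambda(u)-g\rVert^{(2+\alpha)}_{[0,\delta]}\le R/2+C'\le R$ (the constant $C'$ is bounded uniformly as $\delta\downarrow 0$), and only then shrinks $\delta$ to make $C(R)\delta^{\alpha/2}\le 1/2$. There is no tension to resolve.

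Two secondary points. First, the paper freezes $\mathcal{L}$ at the initial slice $\bigl(t,0,y,g(t,y),g_y(t,y),g_{yy}(t,y),g(0,y),g_y(0,y),g_{yy}(0,y)\bigr)$, producing coefficients constant in $s$; your choice of freezing at $u_0(t,s,y)$ is also admissible (the vanishing-at-$s=0$ cancellation described above still holds), but freezing at $s=0$ keeps the coefficients explicit and avoids having to control an auxiliary reference function. Second, you must estimate not just $\varphi$ but also $\varphi_t$ in $|\cdot|^{(\alpha)}$ (the $\Omega^{(\alpha)}$-norm contains a $t$-derivative term), and this is where the second-order derivatives of $F$ listed in Table~\ref{tab:table2} are required; your sketch invokes ``Lipschitz continuity of $\nabla F$'' but does not exhibit the $\varphi_t$ analysis, which is the bulk of the paper's computation.
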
 

\begin{proof}
	We prove the theorem again with fixed-point arguments. \\
	
	\noindent (\textbf{Existence}) The solution of \eqref{Nonlocal fully nonlinear equation in Section 3} is found via a fixed point of the operator $\Lambda$ defined in the set
	\begin{equation*}
		\mathcal{U}=\left\{u\in\Omega^{(2+\alpha)}_{[0,\delta]}:u(t,0,y)=g(t,y),\lVert u-g\rVert^{(2+\alpha)}_{[0,\delta]}\leq R\right\} 
	\end{equation*} 
	and $\Lambda(u)=U$, where $U$ is the solution of  
	\begin{equation} \label{Mapping from u to U}
		\left\{
		\begin{array}{lr}
			U_s(t,s,y)=\mathcal{L}U+F\big(t,s,y,u(t,s,y),u_y(t,s,y),u_{yy}(t,s,y), \\
			\qquad\qquad\qquad\qquad\qquad\qquad  u(s,s,y),u_y(s,s,y),u_{yy}(s,s,y)\big)-\mathcal{L}u, \\
			U(t,0,y)=g(t,y),\quad 0\leq s\leq t\leq \delta,\quad y\in\mathbb{R}^d.
		\end{array}
		\right.
	\end{equation}
	where $\mathcal{L}$ is a linear operator given by
	\begin{equation} \label{Nonlocal linear differential operator L} 
		\begin{split}
			\mathcal{L}u=&\sum^d_{i,j=1}F_{q_{ij}}\frac{\partial^2u}{\partial y_i\partial y_j}+\sum^d_{i=1}F_{p_{i}}\frac{\partial u}{\partial y_i}+F_u u \\
			&+\sum^d_{i,j=1}F_{n_{ij}}\left(\left.\frac{\partial^2u}{\partial y_i\partial y_j}\right|_{t=s}\right)+\sum^d_{i=1}F_{m_i}\left(\left.\frac{\partial u}{\partial y_i}\right|_{t=s}\right)+F_l \Big(\left.u\right|_{t=s}\Big)
		\end{split}
	\end{equation}
	with the partial derivatives $F_u$, $F_p$, $F_q$, $F_l$, $F_m$ and $F_n$ of $F(t,s,y,u,p,q,l,m,n)$ evaluated at the point $\big(t,0,y,g(t,y),g_y(t,y),g_{yy}(t,y),g(0,y),g_y(0,y),g_{yy}(0,y)\big)$. It is clear that all coefficients of $\mathcal{L}$ belong to $\Omega^{(\alpha)}_{[0,\delta]}$ and $F-\mathcal{L}u$ in \eqref{Mapping from u to U} also belongs to $\Omega^{(\alpha)}_{[0,\delta]}$. Hence, by the well-posedness result of nonlocal linear PDEs in Section \ref{sec:linearresult}, the mapping $\Lambda(u)$ from $\Omega^{{(2+\alpha)}}_{[0,\delta]}$ to $\Omega^{{(2+\alpha)}}_{[0,\delta]}$ is well-defined. 
	
	Generally speaking, we shall show that for every $u,\widehat{u}\in\mathcal{U}$,
	\begin{equation*}
		\lVert\Lambda(u)-\Lambda(\widehat{u})\rVert^{(2+\alpha)}_{[0,\delta]}\leq C(R)\delta^{\frac{\alpha}{2}}\lVert u-\widehat{u}\rVert^{(2+\alpha)}_{[0,\delta]}.
	\end{equation*}
	
	Let $u,\widehat{u}\in\mathcal{U}$. Then $w=\Lambda(u)-\Lambda(\widehat{u})$ satisfies 
	\begin{equation*}
		\left\{
		\begin{array}{rcl}
			w_s(t,s,y)& = & \mathcal{L}w+F\big(t,s,y,u(t,s,y),u_y(t,s,y),u_{yy}(t,s,y), \\
			&& \qquad \qquad \qquad \quad u(s,s,y),u_y(s,s,y),u_{yy}(s,s,y)\big)\\
			&& \qquad-F\big(t,s,y,\widehat{u}(t,s,y),\widehat{u}_y(t,s,y),\widehat{u}_{yy}(t,s,y), \\
			&& \qquad \qquad \qquad \quad \widehat{u}(s,s,y),\widehat{u}_y(s,s,y),\widehat{u}_{yy}(s,s,y)\big)-\mathcal{L}(u-\widehat{u}), \\
			w(t,0,y) & = & 0,\quad 0\leq s\leq t\leq \delta,\quad y\in\mathbb{R}^d.
		\end{array}
		\right.
	\end{equation*}
	
	By \eqref{Estimates of solutions of nonlocal linear PDE} and \eqref{Estimates of solutions of nonlocal PDEs}, there is $c>0$, independent of $\delta$, such that 
	\begin{equation*}  
		\begin{split}
			&\sup\limits_{t\in[0,\delta]}\left\{\left|w(t,\cdot,\cdot)\right|^{(2+\alpha)}_{[0,t]\times\mathbb{R}^d}+\left|w_t(t,\cdot,\cdot)\right|^{(2+\alpha)}_{[0,t]\times\mathbb{R}^d}\right\} \\
			\leq &~ c\sup\limits_{t\in[0,\delta]}\left\{\left|\varphi(t,\cdot,\cdot)\right|^{(\alpha)}_{[0,t]\times\mathbb{R}^d}+\left|\varphi_t(t,\cdot,\cdot)\right|^{(\alpha)}_{[0,t]\times\mathbb{R}^d}\right\},
		\end{split}
	\end{equation*} 
	where 
	\begin{eqnarray*}
		\varphi(t,s,y) & = & F\big(t,s,y,u(t,s,y),u_y(t,s,y),u_{yy}(t,s,y), \\
		&& \qquad \qquad u(s,s,y),u_y(s,s,y),u_{yy}(s,s,y)\big)\\
		&&-F\big(t,s,y,\widehat{u}(t,s,y),\widehat{u}_y(t,s,y),\widehat{u}_{yy}(t,s,y), \\
		&& \qquad \qquad \quad \widehat{u}(s,s,y),\widehat{u}_y(s,s,y),\widehat{u}_{yy}(s,s,y)\big)-\mathcal{L}(u-\widehat{u}).
	\end{eqnarray*}
	
	In order to estimate $|\varphi(t,\cdot,\cdot)|^{(\alpha)}_{[0,t]\times\mathbb{R}^d}$ and $|\varphi_t(t,\cdot,\cdot)|^{(\alpha)}_{[0,t]\times\mathbb{R}^d}$, it is convenient to write $\varphi(t,s,y)$ as an integral representation 
	\begin{align}
		&\varphi(t,s,y) \label{varphi}  \\
		=&\int^1_0\left(F_u(t,s,y,\theta_\sigma(t,s,y))-F_u(t,0,y,\theta_0(t,y))\right)(u(t,s,y)-\widehat{u}(t,s,y))d\sigma \cr
		&+\int^1_0\sum^d_{i=1}\left(F_{p_i}(t,s,y,\theta_\sigma(t,s,y))-F_{p_i}(t,0,y,\theta_0(t,y))\right)(u_{y_i}(t,s,y)-\widehat{u}_{y_i}(t,s,y))d\sigma \cr
		&+\int^1_0\sum^d_{i,j=1}\left(F_{q_{ij}}(t,s,y,\theta_\sigma(t,s,y))-F_{q_{ij}}(t,0,y,\theta_0(t,y))\right)(u_{y_iy_j}(t,s,y)-\widehat{u}_{y_iy_j}(t,s,y))d\sigma \cr
		&+\int^1_0\left(F_l(t,s,y,\theta_\sigma(t,s,y))-F_l(t,0,y,\theta_0(t,y))\right)(u(s,s,y)-\widehat{u}(s,s,y))d\sigma \cr
		&+\int^1_0\sum^d_{i=1}\left(F_{m_i}(t,s,y,\theta_\sigma(t,s,y))-F_{m_i}(t,0,y,\theta_0(t,y))\right)(u_{y_i}(s,s,y)-\widehat{u}_{y_i}(s,s,y))d\sigma \cr
		&+\int^1_0\sum^d_{i,j=1}\left(F_{n_{ij}}(t,s,y,\theta_\sigma(t,s,y))-F_{n_{ij}}(t,0,y,\theta_0(t,y))\right)(u_{y_iy_j}(s,s,y)-\widehat{u}_{y_iy_j}(s,s,y))d\sigma \nonumber
	\end{align}
	with $\theta_0(t,y)=\big(g(t,y),g_y(t,y),g_{yy}(t,y),g(0,y),g_y(0,y),g_{yy}(0,y)\big)$ and
	\begin{eqnarray*}
		\theta_\sigma(t,s,y)&=&\sigma\big(u(t,s,y),u_y(t,s,y),u_{yy}(t,s,y),u(s,s,y),u_y(s,s,y),u_{yy}(s,s,y)\big) \\
		&&+(1-\sigma)\big(\widehat{u}(t,s,y),\widehat{u}_y(t,s,y),\widehat{u}_{yy}(t,s,y),\widehat{u}(s,s,y),\widehat{u}_y(s,s,y),\widehat{u}_{yy}(s,s,y)\big). \\
	\end{eqnarray*} 
	
	\noindent (\textbf{Estimates of $|\varphi(t,\cdot,\cdot)|^{(\alpha)}_{[0,t]\times\mathbb{R}^d}$}) We first consider $|\varphi(t,s,y)-\varphi(t,s^\prime,y)|$ for any $0\leq s\leq s^\prime\leq t\leq\delta\leq T$ and $y\in\mathbb{R}^d$. It is convenient to add and subtract the following to $\varphi(t,s,y)-\varphi(t,s^\prime,y)$:
	\begin{equation*} 
		\begin{split}
			&\int^1_0 F_u(t,s^\prime,y,\theta_\sigma(t,s^\prime,y))\times(u(t,s,y)-\widehat{u}(t,s,y))d\sigma \\
			+&\int^1_0\sum^d_{i=1} F_p(t,s^\prime,y,\theta_\sigma(t,s^\prime,y))\times(u_{y_i}(t,s,y)-\widehat{u}_{y_i}(t,s,y))d\sigma \\
			+&\int^1_0\sum^d_{i,j=1} F_{q_{ij}}(t,s^\prime,y,\theta_\sigma(t,s^\prime,y))\times(u_{y_iy_j}(t,s,y)-\widehat{u}_{y_iy_j}(t,s,y))d\sigma \\
			+&\int^1_0 F_l(t,s^\prime,y,\theta_\sigma(t,s^\prime,y))\times(u(s,s,y)-\widehat{u}(s,s,y))d\sigma \\
			+&\int^1_0\sum^d_{i=1} F_{m_i}(t,s^\prime,y,\theta_\sigma(t,s^\prime,y))\times(u_{y_i}(s,s,y)-\widehat{u}_{y_i}(s,s,y))d\sigma \\
			+&\int^1_0\sum^d_{i,j=1} F_{n_{ij}}(t,s^\prime,y,\theta_\sigma(t,s^\prime,y))\times(u_{y_iy_j}(s,s,y)-\widehat{u}_{y_iy_j}(s,s,y))d\sigma     
		\end{split}
	\end{equation*}
	Let $F_\chi$ be any first order derivatives of $F$ with respect to $(u,p,q,l,m,n)$. We have 
	\begin{align*}
		&\left|F_\chi(t,s,y,\theta_\sigma(t,s,y))-F_\chi(t,s^\prime,y,\theta_\sigma(t,s^\prime,y))\right| \leq L\left|\theta_\sigma(t,s,y)-\theta_\sigma(t,s^\prime,y)\right|+K(s^\prime-s)^\frac{\alpha}{2} \\
		\leq &~L\left(\sum^d\limits_{i,j=1}\left|u_{y_iy_j}(t,s,y)-u_{y_iy_j}(t,s^\prime,y)\right|+\sum^d_{i=1}\left|u_{y_i}(t,s,y)-u_{y_i}(t,s^\prime,y)\right|\right.\\
		&\qquad+\left|u(t,s,y)-u(t,s^\prime,y)\right|+\sum^d\limits_{i,j=1}\left|u_{y_iy_j}(s,s,y)-u_{y_iy_j}(s^\prime,s^\prime,y)\right| \\
		&\qquad \left.+\sum^d_{i=1}\left|u_{y_i}(s,s,y)-u_{y_i}(s^\prime,s^\prime,y)\right|+\left|u(s,s,y)-u(s^\prime,s^\prime,y)\right|\right) \\
		&+L\left(\sum^d\limits_{i,j=1}\left|\widehat{u}_{y_iy_j}(t,s,y)-\widehat{u}_{y_iy_j}(t,s^\prime,y)\right|+\sum^d_{i=1}\left|\widehat{u}_{y_i}(t,s,y)-\widehat{u}_{y_i}(t,s^\prime,y)\right|\right.\\
		&\qquad+\left|\widehat{u}(t,s,y)-\widehat{u}(t,s^\prime,y)\right|+\sum^d\limits_{i,j=1}\left|\widehat{u}_{y_iy_j}(s,s,y)-\widehat{u}_{y_iy_j}(s^\prime,s^\prime,y)\right| \\
		&\qquad \left.+\sum^d_{i=1}\left|\widehat{u}_{y_i}(s,s,y)-\widehat{u}_{y_i}(s^\prime,s^\prime,y)\right|+\left|\widehat{u}(s,s,y)-\widehat{u}(s^\prime,s^\prime,y)\right|\right)+K(s^\prime-s)^\frac{\alpha}{2}
		\\
		\leq &~L\left(|u(t,\cdot,\cdot)|^{(2+\alpha)}_{[0,t]\times\mathbb{R}^d}(s^\prime-s)^\frac{\alpha}{2}+\sup\limits_{\overline{s}\in(s,s^\prime)}|u_t(\overline{s},\cdot,\cdot)|^{(2+\alpha)}_{[0,\overline{s}]\times\mathbb{R}^d}(s^\prime-s)^1\right. \\
		&\qquad+|u(s^\prime,\cdot,\cdot)|^{(2+\alpha)}_{[0,s^\prime]\times\mathbb{R}^d}(s^\prime-s)^\frac{\alpha}{2} +|\widehat{u}(t,\cdot,\cdot)|^{(2+\alpha)}_{[0,t]\times\mathbb{R}^d}(s^\prime-s)^\frac{\alpha}{2} \\
		&\left.\qquad+\sup\limits_{\overline{s}\in(s,s^\prime)}|\widehat{u}_t(\overline{s},\cdot,\cdot)|^{(2+\alpha)}_{[0,\overline{s}]\times\mathbb{R}^d}(s^\prime-s)^1+|\widehat{u}(s^\prime,\cdot,\cdot)|^{(2+\alpha)}_{[0,s^\prime]\times\mathbb{R}^d}(s^\prime-s)^\frac{\alpha}{2}\right)+K(s^\prime-s)^\frac{\alpha}{2} \\
		\leq &\left(K+L\left(\lVert u\rVert^{(2+\alpha)}_{[0,\delta]}+\lVert \widehat{u}\rVert^{(2+\alpha)}_{[0,\delta]}\right)\right)(s^\prime-s)^\frac{\alpha}{2} \leq C_1(R)(s^\prime-s)^\frac{\alpha}{2}
	\end{align*}
	and 
	\begin{align*}
		&|F_\chi(t,s^\prime,y,\theta_\sigma(t,s^\prime,y))-F_\chi(t,0,y,\theta_0(t,y))| \le L\left|\theta_\sigma(t,s^\prime,y)-\theta_0(t,y)\right|+K(s^\prime-0)^\frac{\alpha}{2} \\
		\leq &~ L\left(\sum^d\limits_{i,j=1}\left|u_{y_iy_j}(t,s^\prime,y)-g_{y_iy_j}(t,y)\right|+\sum^d_{i=1}\left|u_{y_i}(t,s^\prime,y)-g_{y_i}(t,y)\right|\right.+\left|u(t,s^\prime,y)-g(t,y)\right|\\
		&\qquad+\sum^d\limits_{i,j=1}\left|u_{y_iy_j}(s^\prime,s^\prime,y)-g_{y_iy_j}(0,y)\right|\left.+\sum^d_{i=1}\left|u_{y_i}(s^\prime,s^\prime,y)-g_{y_i}(0,y)\right|+\left|u(s^\prime,s^\prime,y)-g(0,y)\right|\right) \\
		& +L\left(\sum^d\limits_{i,j=1}\left|\widehat{u}_{y_iy_j}(t,s^\prime,y)-g_{y_iy_j}(t,y)\right|+\sum^d_{i=1}\left|\widehat{u}_{y_i}(t,s^\prime,y)-g_{y_i}(t,y)\right|\right.+\left|\widehat{u}(t,s^\prime,y)-g(t,y)\right| \\
		&\qquad+\sum^d\limits_{i,j=1}\left|\widehat{u}_{y_iy_j}(s^\prime,s^\prime,y)-g_{y_iy_j}(0,y)\right| \left.+\sum^d_{i=1}\left|\widehat{u}_{y_i}(s^\prime,s^\prime,y)-g_{y_i}(0,y)\right|+\left|\widehat{u}(s^\prime,s^\prime,y)-g(0,y)\right|\right)\\
		&+K(s^\prime-0)^\frac{\alpha}{2} \\
	\end{align*}
	\begin{align*}
		\leq &~ L\left(|\left(u-g\right)(t,\cdot,\cdot)|^{(2+\alpha)}_{[0,t]\times\mathbb{R}^d}(s^\prime-0)^\frac{\alpha}{2}+\sup\limits_{\overline{s}\in(0,s^\prime)}|g_t(\overline{s},\cdot)|^{(2+\alpha)}_{\mathbb{R}^d}(s^\prime-0)^1\right. \\
		&\qquad+|u(s^\prime,\cdot,\cdot)|^{(2+\alpha)}_{[0,s^\prime]\times\mathbb{R}^d}(s^\prime-0)^\frac{\alpha}{2} +|\left(\widehat{u}-g\right)(t,\cdot,\cdot)|^{(2+\alpha)}_{[0,t]\times\mathbb{R}^d}(s^\prime-0)^\frac{\alpha}{2} \\
		&\left.\qquad+\sup\limits_{\overline{s}\in(0,s^\prime)}|g_t(\overline{s},\cdot)|^{(2+\alpha)}_{\mathbb{R}^d}(s^\prime-0)^1+|\widehat{u}(s^\prime,\cdot,\cdot)|^{(2+\alpha)}_{[0,s^\prime]\times\mathbb{R}^d}(s^\prime-0)^\frac{\alpha}{2}\right)+K(s^\prime-0)^\frac{\alpha}{2} \\
		\leq &~ \left(K+L\left(\lVert u-g\rVert^{(2+\alpha)}_{[0,\delta]}+\lVert \widehat{u}-g\rVert^{(2+\alpha)}_{[0,\delta]}+\lVert g\rVert^{(2+\alpha)}_{[0,\delta]}\right)\right)(s^\prime-0)^\frac{\alpha}{2} \leq C_2(R)\delta^\frac{\alpha}{2} 
	\end{align*}
	where $L>0$ is a constant which could vary from line to line. 
	
	Since $u$ and $\widehat{u}$ satisfy the same initial condition $g$, we have  
	\begin{equation} \label{Holder continuity of varphi with respect to s}
		\begin{split}
			&|\varphi(t,s,y)-\varphi(t,s^\prime,y)| \\
			\leq &~ C_1(R)(s^\prime-s)^\frac{\alpha}{2}\Bigg(\delta^1\left|(u-\widehat{u})_s(t,\cdot,\cdot)\right|^{(0)}_{[0,t]\times\mathbb{R}^d}+\delta^\frac{1+\alpha}{2}\sum^d\limits_{i=1}\left|(u-\widehat{u})_{y_i}(t,\cdot,\cdot)\right|^{(1+\alpha)}_{[0,t]\times\mathbb{R}^d} \\
			&\qquad+\delta^\frac{\alpha}{2}\sum^d\limits_{i,j=1}\left|(u-\widehat{u})_{y_iy_j}(t,\cdot,\cdot)\right|^{(\alpha)}_{[0,t]\times\mathbb{R}^d}+\delta^1\left|(u-\widehat{u})_s(s,\cdot,\cdot)\right|^{(0)}_{[0,s]\times\mathbb{R}^d} \\
			&\qquad+\delta^\frac{1+\alpha}{2}\sum^d\limits_{i=1}\left|(u-\widehat{u})_{y_i}(s,\cdot,\cdot)\right|^{(1+\alpha)}_{[0,s]\times\mathbb{R}^d}+\delta^\frac{\alpha}{2}\sum^d\limits_{i,j=1}\left|(u-\widehat{u})_{y_iy_j}(s,\cdot,\cdot)\right|^{(\alpha)}_{[0,s]\times\mathbb{R}^d}\Bigg) \\
			&+C_2(R)\delta^\frac{\alpha}{2}\Bigg((s^\prime-s)^1\left|(u-\widehat{u})_s(t,\cdot,\cdot)\right|^{(0)}_{[0,t]\times\mathbb{R}^d}+(s^\prime-s)^\frac{1+\alpha}{2}\sum^d\limits_{i=1}\left|(u-\widehat{u})_{y_i}(t,\cdot,\cdot)\right|^{(1+\alpha)}_{[0,t]\times\mathbb{R}^d} \\
			&\qquad+(s^\prime-s)^\frac{\alpha}{2}\sum^d\limits_{i,j=1}\left|(u-\widehat{u})_{y_iy_j}(t,\cdot,\cdot)\right|^{(\alpha)}_{[0,t]\times\mathbb{R}^d} \\
			&\qquad+(s^\prime-s)^1\sup\limits_{\overline{s}\in(s,s^\prime)}\left|(u-\widehat{u})_t(\overline{s},\cdot,\cdot)\right|^{(0)}_{[0,\overline{s}]\times\mathbb{R}^d}+(s^\prime-s)^1\left|(u-\widehat{u})_s(s^\prime,\cdot,\cdot)\right|^{(0)}_{[0,s^\prime]\times\mathbb{R}^d} \\
			&\qquad+(s^\prime-s)^1\sum^d\limits_{i=1}\sup\limits_{\overline{s}\in(s,s^\prime)}\left|(u-\widehat{u})_{ty_i}(\overline{s},\cdot,\cdot)\right|^{(0)}_{[0,\overline{s}]\times\mathbb{R}^d}+(s^\prime-s)^\frac{1+\alpha}{2}\sum^d_{i=1}\left|(u-\widehat{u})_{y_i}(s^\prime,\cdot,\cdot)\right|^{(1+\alpha)}_{[0,s^\prime]\times\mathbb{R}^d} \\
			&\qquad+(s^\prime-s)^1\sum^d\limits_{i,j=1}\sup\limits_{\overline{s}\in(s,s^\prime)}\left|(u-\widehat{u})_{ty_iy_j}(\overline{s},\cdot,\cdot)\right|^{(0)}_{[0,\overline{s}]\times\mathbb{R}^d}+(s^\prime-s)^\frac{\alpha}{2}\sum^d\limits_{i,j=1}\left|(u-\widehat{u})_{y_iy_j}(s^\prime,\cdot,\cdot)\right|^{(\alpha)}_{[0,s^\prime]\times\mathbb{R}^d}\Bigg) \\
			\leq &~ C_3(R)\delta^{\frac{\alpha}{2}}(s^\prime-s)^{\frac{\alpha}{2}}\lVert u-\widehat{u}\rVert^{(2+\alpha)}_{[0,\delta]}, 
		\end{split}
	\end{equation}
	which also implies that
	\begin{equation} \label{Boundness of varphi} 
		|\varphi(t,\cdot,\cdot)|^{\infty}_{[0,t]\times\mathbb{R}^d}\leq C_3(R)\delta^\alpha\lVert u-\widehat{u}\rVert^{(2+\alpha)}_{[0,\delta]}, 
	\end{equation}
	since $\varphi(t,0,y)\equiv 0$.
	
	To estimate $|\varphi(t,s,y)-\varphi(t,s,y^\prime)|$ it is convenient to add and subtract the following to $\varphi(t,s,y)-\varphi(t,s,y^\prime)$:
	\begin{equation*}
		\begin{split}
			&\int^1_0\left(F_u(t,s,y^\prime,\theta_\sigma(t,s,y^\prime))-F_u(t,0,y^\prime,\theta_0(t,y^\prime))\right)\times(u(t,s,y)-\widehat{u}(t,s,y))d\sigma \\
			+&\int^1_0\sum^d_{i=1}\left(F_{p_i}(t,s,y^\prime,\theta_\sigma(t,s,y^\prime))-F_{p_i}(t,0,y^\prime,\theta_0(t,y^\prime))\right)\times(u_{y_i}(t,s,y)-\widehat{u}_{y_i}(t,s,y))d\sigma \\
			+&\int^1_0\sum^d_{i,j=1}\left(F_{q_{ij}}(t,s,y^\prime,\theta_\sigma(t,s,y^\prime))-F_{q_{ij}}(t,0,y^\prime,\theta_0(t,y^\prime))\right)\times(u_{y_iy_j}(t,s,y)-\widehat{u}_{y_iy_j}(t,s,y))d\sigma \\
			+&\int^1_0\left(F_l(t,s,y^\prime,\theta_\sigma(t,s,y^\prime))-F_l(t,0,y^\prime,\theta_0(t,y^\prime))\right)\times(u(s,s,y)-\widehat{u}(s,s,y))d\sigma \\
			+&\int^1_0\sum^d_{i=1}\left(F_{m_i}(t,s,y^\prime,\theta_\sigma(t,s,y^\prime))-F_{m_i}(t,0,y^\prime,\theta_0(t,y^\prime))\right)\times(u_{y_i}(s,s,y)-\widehat{u}_{y_i}(s,s,y))d\sigma \\
			+&\int^1_0\sum^d_{i,j=1}\left(F_{n_{ij}}(t,s,y^\prime,\theta_\sigma(t,s,y^\prime))-F_{n_{ij}}(t,0,y^\prime,\theta_0(t,y^\prime))\right)\times(u_{y_iy_j}(s,s,y)-\widehat{u}_{y_iy_j}(s,s,y))d\sigma
		\end{split}
	\end{equation*}
	
	For any first order derivatives $F_\chi$ of $F$ with respect to $(u,p,q,l,m,n)$, we have
	\begin{align*}
		&|F_\chi(t,s,y,\theta_\sigma(t,s,y))-F_\chi(t,s,y^\prime,\theta_\sigma(t,s,y^\prime))|+|F_\chi(t,0,y,\theta_0(t,y))-F_\chi(t,0,y^\prime,\theta_0(t,y^\prime))| \\ 
		\leq &~ L\left|\theta_\sigma(t,s,y)-\theta_\sigma(t,s,y^\prime)\right|+L\left|\theta_0(t,y)-\theta_0(t,y^\prime)\right|+2K|y-y^\prime|^\alpha \\
		\leq &~ L\left(\sum^d\limits_{i,j=1}\left|u_{y_iy_j}(t,s,y)-u_{y_iy_j}(t,s,y^\prime)\right|+\sum^d_{i=1}\left|u_{y_i}(t,s,y)-u_{y_i}(t,s,y^\prime)\right|\right.+\left|u(t,s,y)-u(t,s,y^\prime)\right| \\
		&\quad+\sum^d\limits_{i,j=1}\left|u_{y_iy_j}(s,s,y)-u_{y_iy_j}(s,s,y^\prime)\right|\left.+\sum^d_{i=1}\left|u_{y_i}(s,s,y)-u_{y_i}(s,s,y^\prime)\right|+\left|u(s,s,y)-u(s,s,y^\prime)\right|\right) \\
		&+L\left(\sum^d\limits_{i,j=1}\left|\widehat{u}_{y_iy_j}(t,s,y)-\widehat{u}_{y_iy_j}(t,s,y^\prime)\right|+\sum^d_{i=1}\left|\widehat{u}_{y_i}(t,s,y)-\widehat{u}_{y_i}(t,s,y^\prime)\right|\right.+\left|\widehat{u}(t,s,y)-\widehat{u}(t,s,y^\prime)\right|\\
		&\quad+\sum^d\limits_{i,j=1}\left|\widehat{u}_{y_iy_j}(s,s,y)-\widehat{u}_{y_iy_j}(s,s,y^\prime)\right|\left.+\sum^d_{i=1}\left|\widehat{u}_{y_i}(s,s,y)-\widehat{u}_{y_i}(s,s,y^\prime)\right|+\left|\widehat{u}(s,s,y)-\widehat{u}(s,s,y^\prime)\right|\right) \\
		&+L\left(\sum^d\limits_{i,j=1}\left|g_{y_iy_j}(t,y)-g_{y_iy_j}(t,y^\prime)\right|+\sum^d_{i=1}\left|g_{y_i}(t,y)-g_{y_i}(t,y^\prime)\right|\right.+\left|g(t,y)-g(t,y^\prime)\right| \\
		&\quad+\sum^d\limits_{i,j=1}\left|g_{y_iy_j}(0,y)-g_{y_iy_j}(0,y^\prime)\right|\left.+\sum^d_{i=1}\left|g_{y_i}(0,y)-g_{y_i}(0,y^\prime)\right|+\left|g(0,y)-g(0,y^\prime)\right|\right)+2K|y-y^\prime|^\alpha \\
		\leq &~ L\Bigg(|u(t,\cdot,\cdot)|^{(2+\alpha)}_{[0,t]\times\mathbb{R}^d}|y-y^\prime|^\alpha+\left|u(s,\cdot,\cdot)\right|^{(2+\alpha)}_{[0,s]\times\mathbb{R}^d}|y-y^\prime|^\alpha \\
		&\qquad+|\widehat{u}(t,\cdot,\cdot)|^{(2+\alpha)}_{[0,t]\times\mathbb{R}^d}|y-y^\prime|^\alpha+\left|\widehat{u}(s,\cdot,\cdot)\right|^{(2+\alpha)}_{[0,s]\times\mathbb{R}^d}|y-y^\prime|^\alpha \\
		&\qquad+|g(t,\cdot)|^{(2+\alpha)}_{\mathbb{R}^d}|y-y^\prime|^\alpha+\left|g(0,\cdot)\right|^{(2+\alpha)}_{\mathbb{R}^d}|y-y^\prime|^\alpha\Bigg)+2K|y-y^\prime|^\alpha \\
		\leq & \left(2K+L\left(\lVert u\rVert^{(2+\alpha)}_{[0,\delta]}+\lVert \widehat{u}\rVert^{(2+\alpha)}_{[0,\delta]}+\lVert g\rVert^{(2+\alpha)}_{[0,\delta]}\right)\right)|y-y^\prime|^\alpha \\
		\leq &~C_4(R)|y-y^\prime|^\alpha, 
	\end{align*}
	and for every $y\in\mathbb{R}^d$, 
	\begin{equation*}
		\begin{split}
			&|F_\chi(t,s,y,\theta_\sigma(t,s,y))-F_\chi(t,0,y,\theta_0(t,y))| \\
			\leq &~L\left|\theta_\sigma(t,s,y)-\theta_0(t,y)\right|+K(s-0)^\frac{\alpha}{2} \\
			\leq &~L\left(\sum^d\limits_{i,j=1}\left|u_{y_iy_j}(t,s,y)-g_{y_iy_j}(t,y)\right|+\sum^d_{i=1}\left|u_{y_i}(t,s,y)-g_{y_i}(t,y)\right|\right.\\
			&\qquad+\left|u(t,s,y)-g(t,y)\right|+\sum^d\limits_{i,j=1}\left|u_{y_iy_j}(s,s,y)-g_{y_iy_j}(0,y)\right| \\
			&\qquad \left.+\sum^d_{i=1}\left|u_{y_i}(s,s,y)-g_{y_i}(0,y)\right|+\left|u(s,s,y)-g(0,y)\right|\right) \\
			&+L\left(\sum^d\limits_{i,j=1}\left|\widehat{u}_{y_iy_j}(t,s,y)-g_{y_iy_j}(t,y)\right|+\sum^d_{i=1}\left|\widehat{u}_{y_i}(t,s,y)-g_{y_i}(t,y)\right|\right.\\
			&\qquad+\left|\widehat{u}(t,s,y)-g(t,y)\right|+\sum^d\limits_{i,j=1}\left|\widehat{u}_{y_iy_j}(s,s,y)-g_{y_iy_j}(0,y)\right| \\
			&\qquad \left.+\sum^d_{i=1}\left|\widehat{u}_{y_i}(s,s,y)-g_{y_i}(0,y)\right|+\left|\widehat{u}(s,s,y)-g(0,y)\right|\right)+K(s-0)^\frac{\alpha}{2} \\
			\leq &~ L\left(|\left(u-g\right)(t,\cdot,\cdot)|^{(2+\alpha)}_{[0,t]\times\mathbb{R}^d}(s-0)^\frac{\alpha}{2}+\sup\limits_{\overline{s}\in(0,s)}|g_t(\overline{s},\cdot)|^{(2+\alpha)}_{\mathbb{R}^d}(s-0)^1\right. \\
			&\qquad+|u(s,\cdot,\cdot)|^{(2+\alpha)}_{[0,s]\times\mathbb{R}^d}(s-0)^\frac{\alpha}{2} +|\left(\widehat{u}-g\right)(t,\cdot,\cdot)|^{(2+\alpha)}_{[0,t]\times\mathbb{R}^d}(s-0)^\frac{\alpha}{2} \\
			&\left.\qquad+\sup\limits_{\overline{s}\in(0,s)}|g_t(\overline{s},\cdot)|^{(2+\alpha)}_{\mathbb{R}^d}(s-0)^1+|\widehat{u}(s,\cdot,\cdot)|^{(2+\alpha)}_{[0,s]\times\mathbb{R}^d}(s-0)^\frac{\alpha}{2}\right)+K(s-0)^\frac{\alpha}{2} \\
			\leq & \left(K+L\left(\lVert u-g\rVert^{(2+\alpha)}_{[0,\delta]}+\lVert \widehat{u}-g\rVert^{(2+\alpha)}_{[0,\delta]}+\lVert g\rVert^{(2+\alpha)}_{[0,\delta]}\right)\right)(s-0)^\frac{\alpha}{2} \\
			\leq &~C_5(R)\delta^\frac{\alpha}{2}.
		\end{split}
	\end{equation*}

	Hence, we have
	\begin{equation} \label{Holder continuity of varphi with respect to y}
		\begin{split}
			&|\varphi(t,s,y)-\varphi(t,s,y^\prime)| \\
			\leq &~C_4(R)\Bigg(\delta^1\left|(u-\widehat{u})_s(t,\cdot,\cdot)\right|^{(0)}_{[0,t]\times\mathbb{R}^d}+\delta^\frac{1+\alpha}{2}\sum^d\limits_{i=1}\left|(u-\widehat{u})_{y_i}(t,\cdot,\cdot)\right|^{(1+\alpha)}_{[0,t]\times\mathbb{R}^d} \\
			&\qquad\qquad+\delta^\frac{\alpha}{2}\sum^d\limits_{i,j=1}\left|(u-\widehat{u})_{y_iy_j}(t,\cdot,\cdot)\right|^{(\alpha)}_{[0,t]\times\mathbb{R}^d}+\delta^1\left|(u-\widehat{u})_s(s,\cdot,\cdot)\right|^{(0)}_{[0,s]\times\mathbb{R}^d} \\
			&\qquad\qquad+\delta^\frac{1+\alpha}{2}\sum^d\limits_{i=1}\left|(u-\widehat{u})_{y_i}(s,\cdot,\cdot)\right|^{(1+\alpha)}_{[0,s]\times\mathbb{R}^d}+\delta^\frac{\alpha}{2}\sum^d\limits_{i,j=1}\left|(u-\widehat{u})_{y_iy_j}(s,\cdot,\cdot)\right|^{(\alpha)}_{[0,s]\times\mathbb{R}^d}\Bigg)|y-y^\prime|^\alpha \\
			&\qquad+C_5(R)\delta^\frac{\alpha}{2}\Bigg(\left|(u-\widehat{u})(t,\cdot,\cdot)\right|^{(\alpha)}_{[0,t]\times\mathbb{R}^d}+\sum^d\limits_{i=1}\left|(u-\widehat{u})_{y_i}(t,\cdot,\cdot)\right|^{(\alpha)}_{[0,t]\times\mathbb{R}^d} \\
			&\qquad\qquad\qquad+\sum^d\limits_{i,j=1}\left|(u-\widehat{u})_{y_iy_j}(t,\cdot,\cdot)\right|^{(\alpha)}_{[0,t]\times\mathbb{R}^d}+\left|(u-\widehat{u})(s,\cdot,\cdot)\right|^{(\alpha)}_{[0,s]\times\mathbb{R}^d} \\
			&\qquad\qquad\qquad+\sum^d\limits_{i=1}\left|(u-\widehat{u})_{y_i}(s,\cdot,\cdot)\right|^{(\alpha)}_{[0,s]\times\mathbb{R}^d}+\sum^d\limits_{i,j=1}\left|(u-\widehat{u})_{y_iy_j}(s,\cdot,\cdot)\right|^{(\alpha)}_{[0,s]\times\mathbb{R}^d}\Bigg)|y-y^\prime|^\alpha \\
			\leq &~C_6(R)\delta^\frac{\alpha}{2}|y-y^\prime|^\alpha\lVert u-\widehat{u}\rVert^{(2+\alpha)}_{[0,\delta]}.  
		\end{split}
	\end{equation}
	
	By \eqref{Holder continuity of varphi with respect to s}, \eqref{Boundness of varphi} and \eqref{Holder continuity of varphi with respect to y}, for any $t\in[0,\delta]$, we conclude that
	\begin{equation} \label{Estimate of varphi}
		|\varphi(t,\cdot,\cdot)|^{(\alpha)}_{[0,t]\times\mathbb{R}^d}\leq C_7(R)\delta^{\frac{\alpha}{2}}\lVert u-\widehat{u}\rVert^{(2+\alpha)}_{[0,\delta]}.
	\end{equation} 
	
	\ \ 
	
	\noindent (\textbf{Estimates of $|\varphi_t(t,\cdot,\cdot)|^{(\alpha)}_{[0,t]\times\mathbb{R}^d}$.}) Next, we analyze the H\"{o}lder continuity of $\varphi_t(t,s,y)$ with respect to $s$ and $y$ in $[0,t]\times\mathbb{R}^d$. By the integral representation of $\varphi(t,s,y)$ in \eqref{varphi}, we write $\varphi_t(t,s,y)$ as
	\begin{equation*}
		\begin{split}
			\varphi_t(t,s,y)=\sum_{\mu}\left(\frac{\partial}{\partial t}I_\mu(t,s,y)\right),
		\end{split}
	\end{equation*}
	where ``$I$" represents the integrals in \eqref{varphi} and the subscript $\mu\in\{u,p_i,q_{ij},l,m_i,n_{ij};~i,j=1,\ldots,d\}$ indicate the variable, of which $F$ is differentiated. 
	
	Then, we analyze $\frac{\partial}{\partial t}I_u$ in the case that $\mu=u$ as follows: 
	\begin{equation*}
		\begin{split}
			&\frac{\partial}{\partial t}I_u(t,s,y) \\
			=&\int^1_0\frac{\partial \big(F_u(t,s,y,\theta_\sigma(t,s,y))-F_u(t,0,y,\theta_0(t,y))\big)}{\partial t}\times(u(t,s,y)-\widehat{u}(t,s,y))d\sigma \\
			&\qquad+\int^1_0\big(F_u(t,s,y,\theta_\sigma(t,s,y))-F_u(t,0,y,\theta_0(t,y))\big)\times\frac{\partial (u(t,s,y)-\widehat{u}(t,s,y))}{\partial t}d\sigma \\
			=&\int^1_0\Bigg\{\bigg[F_{ut}(t,s,y,\theta_\sigma(t,s,y))-F_{ut}(t,0,y,\theta_0(t,y))\bigg] \\
			&\qquad+\bigg[F_{uu}(t,s,y,\theta_\sigma(t,s,y))\cdot\Big(\sigma u_t(t,s,y)+(1-\sigma)\widehat{u}_t(t,s,y)\Big) -F_{uu}(t,0,y,\theta_0(t,y))\cdot g_t(t,y)\bigg] \\
			&\qquad+\sum^n_{i=1}\bigg[F_{up_i}(t,s,y,\theta_\sigma(t,s,y))\cdot\Big(\sigma u_{y_it}(t,s,y)+(1-\sigma)\widehat{u}_{y_it}(t,s,y)\Big)-F_{up_i}(t,0,y,\theta_0(t,y))\cdot g_{y_it}(t,y)\bigg] \\
			&\qquad+\sum^n_{i,j=1}\Big[F_{uq_{ij}}(t,s,y,\theta_\sigma(t,s,y))\cdot\bigg(\sigma u_{y_iy_jt}(t,s,y)+(1-\sigma)\widehat{u}_{y_iy_jt}(t,s,y)\Big) \\
			&\qquad\qquad \qquad -F_{uq_{ij}}(t,0,y,\theta_0(t,y))\cdot g_{y_iy_jt}(t,y)\bigg]\Bigg\}\times(u(t,s,y)-\widehat{u}(t,s,y))d\sigma \\
			&+\int^1_0\big(F_u(t,s,y,\theta_\sigma(t,s,y))-F_u(t,0,y,\theta_0(t,y))\big)\times(u_t(t,s,y)-\widehat{u}_t(t,s,y))d\sigma \\
			=&\int^1_0\big(F_{ut}(t,s,y,\theta_\sigma(t,s,y))-F_{ut}(t,0,y,\theta_0(t,y))\big)\times(u(t,s,y)-\widehat{u}(t,s,y))d\sigma \\
			&+\int^1_0\big(F_u(t,s,y,\theta_\sigma(t,s,y))-F_u(t,0,y,\theta_0(t,y))\big)\times(u_t(t,s,y)-\widehat{u}_t(t,s,y))d\sigma \\
			&+\sum_\nu J_\nu(t,s,y) \\
			=&J_1(t,s,y)+J_2(t,s,y)+\sum_\nu J_\nu(t,s,y),
		\end{split}
	\end{equation*}
	where the index $\nu\in \{uu,up_i,uq_{ij};~i,j=1,\ldots,d\}$ in the summations. 
	
	We are to estimate the $|\frac{\partial}{\partial t}I_u(t,s,y)-\frac{\partial}{\partial t}I_u(t,s^\prime,y)|$ in $|\varphi_t(t,s,y)-\varphi_t(t,s^\prime,y)|$ for any $0\leq s\leq s^\prime\leq t\leq \delta\leq T$ and $y\in\mathbb{R}^d$. It can be observed that the estimates of $J_1$ and $J_2$ are the same as the ones of $\varphi$. We focus on those $J_\nu$ terms below. In order to estimate $|J_{uu}(t,s,y)-J_{uu}(t,s^\prime,y)|$, it is convenient to add and subtract the following to $J_{uu}(t,s,y)-J_{uu}(t,s^\prime,y)$:
	\begin{equation*}
		\int^1_0 F_{uu}(t,s^\prime,y,\theta_\sigma(t,s^\prime,y))\cdot\big(\sigma u_t(t,s^\prime,y)+(1-\sigma)\widehat{u}_t(t,s^\prime,y)\big)\times(u(t,s,y)-\widehat{u}(t,s,y))d\sigma. 
	\end{equation*}
	
	We note that
	\begin{equation*}
		\begin{split}
			&\big|F_{uu}(t,s,y,\theta_\sigma(t,s,y))\cdot\big(\sigma u_t(t,s,y)+(1-\sigma)\widehat{u}_t(t,s,y)\big) \\
			&\qquad\qquad\qquad-F_{uu}(t,s^\prime,y,\theta_\sigma(t,s^\prime,y))\cdot\big(\sigma u_t(t,s^\prime,y)+(1-\sigma)\widehat{u}_t(t,s^\prime,y)\big)\big| \\
			\leq &~\big|F_{uu}(t,s,y,\theta_\sigma(t,s,y))\cdot\big(\sigma u_t(t,s,y)+(1-\sigma)\widehat{u}_t(t,s,y)\big) \\
			&\qquad\qquad\qquad-F_{uu}(t,s^\prime,y,\theta_\sigma(t,s^\prime,y))\cdot\big(\sigma u_t(t,s,y)+(1-\sigma)\widehat{u}_t(t,s,y)\big)\big| \\
			&+\big|F_{uu}(t,s^\prime,y,\theta_\sigma(t,s^\prime,y))\cdot\big(\sigma u_t(t,s,y)+(1-\sigma)\widehat{u}_t(t,s,y)\big) \\
			&\qquad\qquad\qquad-F_{uu}(t,s^\prime,y,\theta_\sigma(t,s^\prime,y))\cdot\big(\sigma u_t(t,s^\prime,y)+(1-\sigma)\widehat{u}_t(t,s^\prime,y)\big)\big| \\
			\leq &~\Big(K(s^\prime-s)^\frac{\alpha}{2}+L\left|\theta_\sigma(t,s,y)-\theta_\sigma(t,s^\prime,y)\right|\Big)\cdot\left(\left|u_t(t,\cdot,\cdot)\right|^{(0)}_{[0,t]\times\mathbb{R}^d}+\left|\widehat{u}_t(t,\cdot,\cdot)\right|^{(0)}_{[0,t]\times\mathbb{R}^d}\right) \\
			&+K\left(\left|u_{ts}(t,\cdot,\cdot)\right|^{(0)}_{[0,t]\times\mathbb{R}^d}+\left|\widehat{u}_{ts}(t,\cdot,\cdot)\right|^{(0)}_{[0,t]\times\mathbb{R}^d}\right)(s^\prime-s)^1 \\
			\leq &~\left[K(s^\prime-s)^\frac{\alpha}{2}+L\left(|u(t,\cdot,\cdot)|^{(2+\alpha)}_{[0,t]\times\mathbb{R}^d}(s^\prime-s)^\frac{\alpha}{2}+\sup\limits_{\overline{s}\in(s,s^\prime)}|u_t(\overline{s},\cdot,\cdot)|^{(2+\alpha)}_{[0,\overline{s}]\times\mathbb{R}^d}(s^\prime-s)^1\right.\right. \\
			&\qquad+|u(s^\prime,\cdot,\cdot)|^{(2+\alpha)}_{[0,s^\prime]\times\mathbb{R}^d}(s^\prime-s)^\frac{\alpha}{2} +|\widehat{u}(t,\cdot,\cdot)|^{(2+\alpha)}_{[0,t]\times\mathbb{R}^d}(s^\prime-s)^\frac{\alpha}{2}+\sup\limits_{\overline{s}\in(s,s^\prime)}|\widehat{u}_t(\overline{s},\cdot,\cdot)|^{(2+\alpha)}_{[0,\overline{s}]\times\mathbb{R}^d}(s^\prime-s)^1 \\
			&\qquad+|\widehat{u}(s^\prime,\cdot,\cdot)|^{(2+\alpha)}_{[0,s^\prime]\times\mathbb{R}^d}(s^\prime-s)^\frac{\alpha}{2}\Bigg)\Bigg]\cdot\left(\left|u_t(t,\cdot,\cdot)\right|^{(0)}_{[0,t]\times\mathbb{R}^d}+\left|\widehat{u}_t(t,\cdot,\cdot)\right|^{(0)}_{[0,t]\times\mathbb{R}^d}\right) \\
			&+K\left(\left|u_{ts}(t,\cdot,\cdot)\right|^{(0)}_{[0,t]\times\mathbb{R}^d}+\left|\widehat{u}_{ts}(t,\cdot,\cdot)\right|^{(0)}_{[0,t]\times\mathbb{R}^d}\right)(s^\prime-s)^1 \\
			\leq &~C_8(R)(s^\prime-s)^\frac{\alpha}{2}, 
		\end{split}
	\end{equation*}
	and 
	\begin{equation*}
		\begin{split}
			&\big|F_{uu}(t,s^\prime,y,\theta_\sigma(t,s^\prime,y))\cdot\big(\sigma u_t(t,s^\prime,y)+(1-\sigma)\widehat{u}_t(t,s^\prime,y)\big) \\
			&\qquad\qquad\qquad\qquad\qquad\qquad\qquad\qquad -F_{uu}(t,0,y,\theta_0(t,y))\cdot g_t(t,y)\big| \\
			\leq &~ \big|F_{uu}(t,s^\prime,y,\theta_\sigma(t,s^\prime,y))\cdot\big(\sigma u_t(t,s^\prime,y)+(1-\sigma)\widehat{u}_t(t,s^\prime,y)\big) \\
			&\qquad\qquad\qquad-F_{uu}(t,s^\prime,y,\theta_\sigma(t,s^\prime,y))\cdot\big(\sigma g_t(t,y)+(1-\sigma)g_t(t,y)\big)\big| \\
			&+\big|F_{uu}(t,s^\prime,y,\theta_\sigma(t,s^\prime,y))\cdot\big(\sigma g_t(t,y)+(1-\sigma)g_t(t,y)\big)-F_{uu}(t,0,y,\theta_0(t,y))\cdot g_t(t,y)\big| \\
			\leq &~ K\left(\left|(u-g)_{ts}(t,\cdot,\cdot)\right|^{(0)}_{[0,t]\times\mathbb{R}^d}+\left|(\widehat{u}-g)_{ts}(t,\cdot,\cdot)\right|^{(0)}_{[0,t]\times\mathbb{R}^d}\right)(s^\prime-0)^1 \\
			&+\Bigg[K(s^\prime-0)^\frac{\alpha}{2}+L\left(|\left(u-g\right)(t,\cdot,\cdot)|^{(2+\alpha)}_{[0,t]\times\mathbb{R}^d}(s^\prime-0)^\frac{\alpha}{2}+\sup\limits_{\overline{s}\in(0,s^\prime)}|g_t(\overline{s},\cdot)|^{(2+\alpha)}_{\mathbb{R}^d}(s^\prime-0)^1\right. \\
			&\qquad+|u(s^\prime,\cdot,\cdot)|^{(2+\alpha)}_{[0,s^\prime]\times\mathbb{R}^d}(s^\prime-0)^\frac{\alpha}{2} +|\left(\widehat{u}-g\right)(t,\cdot,\cdot)|^{(2+\alpha)}_{[0,t]\times\mathbb{R}^d}(s^\prime-0)^\frac{\alpha}{2} \\
			&\left.\qquad+\sup\limits_{\overline{s}\in(0,s^\prime)}|g_t(\overline{s},\cdot)|^{(2+\alpha)}_{\mathbb{R}^d}(s^\prime-0)^1+|\widehat{u}(s^\prime,\cdot,\cdot)|^{(2+\alpha)}_{[0,s^\prime]\times\mathbb{R}^d}(s^\prime-0)^\frac{\alpha}{2}\right)\Bigg]\cdot\left|g_t(t,\cdot)\right|^{(0)}_{\mathbb{R}^d} \\
			\leq &~C_9(R)\delta^\frac{\alpha}{2}.  
		\end{split}
	\end{equation*} 
	
	Hence, we have
	\begin{equation} \label{Holder continuity of J2 with respect to s}
		\begin{split}
			|J_{uu}(t,s,y)-J_{uu}(t,s^\prime,y)| 
			\leq &~ C_8(R)\delta^1(s^\prime-s)^{\frac{\alpha}{2}}\sup\limits_{t\in[0,\delta]}|(u-\widehat{u})_s(t,\cdot,\cdot)|^{(0)}_{[0,t]\times\mathbb{R}^d} \\
			&\qquad+C_9(R)\delta^{\frac{\alpha}{2}}(s^\prime-s)^1\sup\limits_{t\in[0,\delta]}|(u-\widehat{u})_s(t,\cdot,\cdot)|^{(0)}_{[0,t]\times\mathbb{R}^d} \\
			\leq &~ C_{10}(R)\delta^\frac{\alpha}{2}(s^\prime-s)^\frac{\alpha}{2}\lVert u-\widehat{u}\rVert^{(2+\alpha)}_{[0,\delta]}
		\end{split}
	\end{equation}
	which implies that (since $J_{uu}(t,0,y)\equiv 0$)
	\begin{equation} \label{Boundness of J2}
		|J_{uu}(t,\cdot,\cdot)|^{\infty}_{[0,t]\times\mathbb{R}^d}\leq C_{11}(R)\delta^\alpha\lVert u-\widehat{u}\rVert^{(2+\alpha)}_{[0,\delta]}.
	\end{equation} 
	
	Next, to estimate $|J_{uu}(t,s,y)-J_{uu}(t,s,y^\prime)|$, it is convenient to add and subtract     
	\begin{equation*}
		\begin{split}
			&\int^1_0\bigg[F_{uu}(t,s,y^\prime,\theta_\sigma(t,s,y^\prime))\cdot\Big(\sigma u_t(t,s,y^\prime)+(1-\sigma)\widehat{u}_t(t,s,y^\prime)\Big) \\
			&\qquad\qquad\qquad-F_{uu}(t,0,y^\prime,\theta_0(t,y^\prime))\cdot g_t(t,y^\prime)\bigg]\times(u_t(t,s,y)-\widehat{u}_t(t,s,y))d\sigma
		\end{split}
	\end{equation*} 
	Then we note that
	\begin{equation*}
		\begin{split}
			&\Big|F_{uu}(t,s,y,\theta_\sigma(t,s,y))\cdot\Big(\sigma u_t(t,s,y)+(1-\sigma)\widehat{u}_t(t,s,y)\Big) \\
			&\qquad\qquad\qquad\qquad\qquad\qquad\qquad\qquad-F_{uu}(t,0,y,\theta_0(t,y))\cdot g_t(t,y) \\
			&\quad -F_{uu}(t,s,y^\prime,\theta_\sigma(t,s,y^\prime))\cdot\Big(\sigma u_t(t,s,y^\prime)+(1-\sigma)\widehat{u}_t(t,s,y^\prime)\Big) \\
			&\qquad\qquad\qquad\qquad\qquad\qquad\qquad\qquad +F_{uu}(t,0,y^\prime,\theta_0(t,y^\prime))\cdot g_t(t,y^\prime)\Big| \\
			\leq &~ \Big|F_{uu}(t,s,y,\theta_\sigma(t,s,y))\cdot\Big(\sigma u_t(t,s,y)+(1-\sigma)\widehat{u}_t(t,s,y)\Big) \\
			&\qquad\qquad\qquad-F_{uu}(t,s,y^\prime,\theta_\sigma(t,s,y^\prime))\cdot\Big(\sigma u_t(t,s,y^\prime)+(1-\sigma)\widehat{u}_t(t,s,y^\prime)\Big)\Big| \\
			&+\Big|F_{uu}(t,0,y^\prime,\theta_0(t,y^\prime))\cdot g_t(t,y^\prime)-F_{uu}(t,0,y,\theta_0(t,y))\cdot g_t(t,y)\Big| \\
			=:&~ N_1+N_2
		\end{split}
	\end{equation*}
	
	For $N_1$, we have 
	\begin{equation*}
		\begin{split}
			&\Big|F_{uu}(t,s,y,\theta_\sigma(t,s,y))\cdot\Big(\sigma u_t(t,s,y)+(1-\sigma)\widehat{u}_t(t,s,y)\Big) \\
			&\qquad\qquad\qquad-F_{uu}(t,s,y^\prime,\theta_\sigma(t,s,y^\prime))\cdot\Big(\sigma u_t(t,s,y^\prime)+(1-\sigma)\widehat{u}_t(t,s,y^\prime)\Big)\Big| \\
			\leq &~ \Big|F_{uu}(t,s,y,\theta_\sigma(t,s,y))\cdot\Big(\sigma u_t(t,s,y)+(1-\sigma)\widehat{u}_t(t,s,y)\Big) \\
			&\qquad\qquad\qquad-F_{uu}(t,s,y,\theta_\sigma(t,s,y))\cdot\Big(\sigma u_t(t,s,y^\prime)+(1-\sigma)\widehat{u}_t(t,s,y^\prime)\Big)\Big| \\
			&+\Big|F_{uu}(t,s,y,\theta_\sigma(t,s,y))\cdot\Big(\sigma u_t(t,s,y^\prime)+(1-\sigma)\widehat{u}_t(t,s,y^\prime)\Big) \\
			&\qquad\qquad\qquad-F_{uu}(t,s,y^\prime,\theta_\sigma(t,s,y^\prime))\cdot\Big(\sigma u_t(t,s,y^\prime)+(1-\sigma)\widehat{u}_t(t,s,y^\prime)\Big)\Big| \\
			\leq &~K\left(\left|u_t(t,\cdot,\cdot)\right|^{(\alpha)}_{[0,t]\times\mathbb{R}^d}+\left|\widehat{u}_t(t,\cdot,\cdot)\right|^{(\alpha)}_{[0,t]\times\mathbb{R}^d}\right)|y-y^\prime|^\alpha \\
			&+\Bigg[K|y-y^\prime|^\alpha+L\Bigg(|u(t,\cdot,\cdot)|^{(2+\alpha)}_{[0,t]\times\mathbb{R}^d}|y-y^\prime|^\alpha+\left|u(s,\cdot,\cdot)\right|^{(2+\alpha)}_{[0,s]\times\mathbb{R}^d}|y-y^\prime|^\alpha \\
			&\quad +|\widehat{u}(t,\cdot,\cdot)|^{(2+\alpha)}_{[0,t]\times\mathbb{R}^d}|y-y^\prime|^\alpha+\left|\widehat{u}(s,\cdot,\cdot)\right|^{(2+\alpha)}_{[0,s]\times\mathbb{R}^d}|y-y^\prime|^\alpha\Bigg)\Bigg] \\
			& \qquad\qquad\qquad\qquad\qquad\qquad\times \left(\left|u_t(t,\cdot,\cdot)\right|^{(0)}_{[0,t]\times\mathbb{R}^d}+\left|\widehat{u}_t(t,\cdot,\cdot)\right|^{(0)}_{[0,t]\times\mathbb{R}^d}\right)  \\
			\leq &~C_{12}(R)|y-y^\prime|^\alpha, 
		\end{split}
	\end{equation*}
	
	For $N_2$, we have 
	\begin{equation*}
		\begin{split}
			&\Big|F_{uu}(t,0,y^\prime,\theta_0(t,y^\prime))\cdot g_t(t,y^\prime)-F_{uu}(t,0,y,\theta_0(t,y))\cdot g_t(t,y)\Big| \\
			\leq &~\Big|F_{uu}(t,0,y^\prime,\theta_0(t,y^\prime))\cdot g_t(t,y^\prime)-F_{uu}(t,0,y,\theta_0(t,y))\cdot g_t(t,y^\prime)\Big| \\
			&\qquad+\Big|F_{uu}(t,0,y,\theta_0(t,y))\cdot g_t(t,y^\prime)-F_{uu}(t,0,y,\theta_0(t,y))\cdot g_t(t,y)\Big| \\
			\leq & \bigg(K|y-y^\prime|^\alpha+|g(t,\cdot)|^{(2+\alpha)}_{\mathbb{R}^d}|y-y^\prime|^\alpha+\left|g(0,\cdot)\right|^{(2+\alpha)}_{\mathbb{R}^d}|y-y^\prime|^\alpha\bigg)\cdot\left|g_t(t,\cdot)\right|^{(0)}_{\mathbb{R}^d} \\
			&\qquad+K\left|g_t(t,\cdot)\right|^{(\alpha)}_{\mathbb{R}^d}|y-y^\prime|^\alpha  \\
			\leq &~C_{13}(R)|y-y^\prime|^\alpha. 
		\end{split}
	\end{equation*}
	
	So we have 
	\begin{equation*}
		\begin{split}
			&\Big|F_{uu}(t,s,y,\theta_\sigma(t,s,y))\cdot\Big(\sigma u_t(t,s,y)+(1-\sigma)\widehat{u}_t(t,s,y)\Big) \\
			&\qquad\qquad\qquad\qquad\qquad\qquad\qquad\qquad -F_{uu}(t,0,y,\theta_0(t,y))\cdot g_t(t,y) \\
			&\quad -F_{uu}(t,s,y^\prime,\theta_\sigma(t,s,y^\prime))\cdot\Big(\sigma u_t(t,s,y^\prime)+(1-\sigma)\widehat{u}_t(t,s,y^\prime)\Big) \\
			&\qquad\qquad\qquad\qquad\qquad\qquad\qquad\qquad +F_{uu}(t,0,y^\prime,\theta_0(t,y^\prime))\cdot g_t(t,y^\prime)\Big| \\
			\leq &~ N_1+N_2\leq C_{14}(R)|y-y^\prime|^\alpha
		\end{split}
	\end{equation*}
	
	Moreover, we need to estimate 
	\begin{equation*}
		\begin{split}
			&\Big|F_{uu}(t,s,y,\theta_\sigma(t,s,y))\cdot\Big(\sigma u_t(t,s,y)+(1-\sigma)\widehat{u}_t(t,s,y)\Big) \\
			&\qquad\qquad\qquad\qquad\qquad\qquad\qquad\qquad 
			-F_{uu}(t,0,y,\theta_0(t,y))\cdot g_t(t,y)\Big| \\
			\leq &~ \Big|F_{uu}(t,s,y,\theta_\sigma(t,s,y))\cdot\Big(\sigma u_t(t,s,y)+(1-\sigma)\widehat{u}_t(t,s,y)\Big) \\
			&\qquad\qquad\qquad\qquad\qquad\qquad\qquad\qquad -F_{uu}(t,s,y,\theta_\sigma(t,s,y))\cdot g_t(t,y)\Big| \\
			&+\Big|F_{uu}(t,s,y,\theta_\sigma(t,s,y))\cdot g_t(t,y)-F_{uu}(t,0,y,\theta_0(t,y))\cdot g_t(t,y)\Big| \\
			\leq &~K\left(\left|(u-g)_{ts}(t,\cdot,\cdot)\right|^{(0)}_{[0,t]\times\mathbb{R}^d}+\left|(\widehat{u}-g)_{ts}(t,\cdot,\cdot)\right|^{(0)}_{[0,t]\times\mathbb{R}^d}\right)(s-0)^1 \\
			& ~+\Bigg[ K(s-0)^\frac{\alpha}{2}+L\left(|\left(u-g\right)(t,\cdot,\cdot)|^{(2+\alpha)}_{[0,t]\times\mathbb{R}^d}(s-0)^\frac{\alpha}{2}+\sup\limits_{\overline{s}\in(0,s)}|g_t(\overline{s},\cdot)|^{(2+\alpha)}_{\mathbb{R}^d}(s-0)^1\right. \\
			&\qquad+|u(s,\cdot,\cdot)|^{(2+\alpha)}_{[0,s]\times\mathbb{R}^d}(s-0)^\frac{\alpha}{2} +|\left(\widehat{u}-g\right)(t,\cdot,\cdot)|^{(2+\alpha)}_{[0,t]\times\mathbb{R}^d}(s-0)^\frac{\alpha}{2} \\
			&\left.\qquad+\sup\limits_{\overline{s}\in(0,s)}|g_t(\overline{s},\cdot)|^{(2+\alpha)}_{\mathbb{R}^d}(s-0)^1+|\widehat{u}(s,\cdot,\cdot)|^{(2+\alpha)}_{[0,s]\times\mathbb{R}^d}(s-0)^\frac{\alpha}{2}\right)\Bigg]\cdot\left|g_{t}(t,\cdot)\right|^{(0)}_{\mathbb{R}^d} \\
			\leq &~C_{15}(R)\delta^\frac{\alpha}{2}. 
		\end{split}
	\end{equation*}
	
	Consequently, we have   
	\begin{equation} \label{Holder continuity of J2 with respect to y}
		\begin{split}
			|J_{uu}(t,s,y)-J_{uu}(t,s,y^\prime)| 
			\leq &~ C_{14}(R)\delta^1|y-y^\prime|^\alpha\sup\limits_{t\in[0,\delta]}|(u-\widehat{u})_s(t,\cdot,\cdot)|^{(0)}_{[0,t]\times\mathbb{R}^d} \\
			&\qquad+C_{15}(R)\delta^\frac{\alpha}{2}\sup\limits_{t\in[0,\delta]}\left|(u-\widehat{u})(t,\cdot,\cdot)\right|^{(\alpha)}_{[0,t]\times\mathbb{R}^d}|y-y^\prime|^\alpha \\
			\leq &~ C_{16}(R)\delta^\frac{\alpha}{2}|y-y^\prime|^\alpha\lVert u-\widehat{u}\rVert^{(2+\alpha)}_{[0,\delta]}. 
		\end{split}
	\end{equation} 
	
	From \eqref{Holder continuity of J2 with respect to s}, \eqref{Boundness of J2} and \eqref{Holder continuity of J2 with respect to y}, for any $t\in[0,\delta]$, we obtain
	\begin{equation*}
		|J_{uu}(t,\cdot,\cdot)|^{(\alpha)}_{[0,t]\times\mathbb{R}^d}\leq C_{17}(R)\delta^{\frac{\alpha}{2}}\lVert u-\widehat{u}\rVert^{(2+\alpha)}_{[0,\delta]}. 
	\end{equation*} 
	Similarly, for any $\nu\in\{uu,up_i,uq_{ij}\}$, we also obtain
	\begin{equation*}
		|J_\nu(t,\cdot,\cdot)|^{(\alpha)}_{[0,t]\times\mathbb{R}^d}\leq C_{18}(R)\delta^{\frac{\alpha}{2}}\lVert u-\widehat{u}\rVert^{(2+\alpha)}_{[0,\delta]}. 
	\end{equation*} 
	Hence, we have
	\begin{equation*}
		\left|\frac{\partial}{\partial t}I_u(t,\cdot,\cdot)\right|^{(\alpha)}_{[0,t]\times\mathbb{R}^d}\leq C_{19}(R)\delta^\frac{\alpha}{2}\lVert u-\widehat{u}\rVert^{(2+\alpha)}_{[0,\delta]}. 
	\end{equation*}
	
	\ \ 
	
	By the same way, we can also estimate $|\frac{\partial}{\partial t}I_\mu(t,\cdot,\cdot)|^{(\alpha)}_{[0,t]\times\mathbb{R}^d}$ for $\mu\in\{u,p_i,q_{ij},l,m_i,n_{ij}\}$: 
	\begin{equation*}
		\left|\frac{\partial}{\partial t}I_\mu(t,\cdot,\cdot)\right|^{(\alpha)}_{[0,t]\times\mathbb{R}^d}\leq C_{20}(R)\delta^\frac{\alpha}{2}\lVert u-\widehat{u}\rVert^{(2+\alpha)}_{[0,\delta]}. 
	\end{equation*}
	Hence, for any $t\in[0,\delta]$, we obtain 
	\begin{equation*}
		|\varphi_t(t,\cdot,\cdot)|^{(\alpha)}_{[0,t]\times\mathbb{R}^d}\leq C_{21}(R)\delta^\frac{\alpha}{2}\lVert u-\widehat{u}\rVert^{(2+\alpha)}_{[0,\delta]}.  
	\end{equation*} 
	
	Together with the estimate of $|\varphi(t,\cdot,\cdot)|^{(\alpha)}_{[0,t]\times\mathbb{R}^d}$ in \eqref{Estimate of varphi}, for any $t\in[0,\delta]$, we have 
	\begin{equation*}
		|\varphi(t,\cdot,\cdot)|^{(\alpha)}_{[0,t]\times\mathbb{R}^d}+|\varphi_t(t,\cdot,\cdot)|^{(\alpha)}_{[0,t]\times\mathbb{R}^d}\leq C(R)\delta^\frac{\alpha}{2}\lVert u-\widehat{u}\rVert^{(2+\alpha)}_{[0,\delta]},
	\end{equation*} 
	which indicates that
	\begin{equation*}
		\sup\limits_{t\in[0,\delta]}\left\{|\varphi(t,\cdot,\cdot)|^{(\alpha)}_{[0,t]\times\mathbb{R}^d}+|\varphi_t(t,\cdot,\cdot)|^{(\alpha)}_{[0,t]\times\mathbb{R}^d}\right\}\leq C(R)\delta^\frac{\alpha}{2}\lVert u-\widehat{u}\rVert^{(2+\alpha)}_{[0,\delta]}.   
	\end{equation*} 
	Consequently, we derive a contraction mapping
	\begin{equation*}
		\lVert\Lambda(u)-\Lambda(\widehat{u})\rVert^{(2+\alpha)}_{[0,\delta]}\leq C(R)\delta^{\frac{\alpha}{2}}\lVert u-\widehat{u}\rVert^{(2+\alpha)}_{[0,\delta]}. 
	\end{equation*} 
	
	\ \ 
	
	\noindent (\textbf{A contraction $\Lambda$ mapping $\mathcal{U}$ into itself.}) Now, we need to choose a suitably large $R$ such that $\Lambda$ maps $\mathcal{U}$ into itself. If $\delta$ and $R$ satisfy 
	\begin{equation*}
		C(R)\delta^{\frac{\alpha}{2}}\leq \frac{1}{2} 
	\end{equation*} 
	then $\Lambda$ is a $\frac{1}{2}$-contraction, and for any $u\in\mathcal{U}$, we have
	\begin{equation*}
		\lVert\Lambda(u)-g\rVert^{(2+\alpha)}_{[0,\delta]}\leq \frac{R}{2}+\lVert \Lambda(g)-g\rVert^{(2+\alpha)}_{[0,\delta]}. 
	\end{equation*} 
	The function $G=\Lambda(g)-g$ is the solution of 
	\begin{equation*}
		\left\{
		\begin{array}{lr}
			G_s(t,s,y)=\mathcal{L}G+F\big(t,s,y,g(t,y),g_y(t,y),g_{yy}(t,y),g(s,y),g_y(s,y),g_{yy}(s,y)\big), \\
			G(t,0,y)=0,\quad 0\leq s\leq t\leq \delta,\quad y\in\mathbb{R}^n.
		\end{array} 
		\right.
	\end{equation*}
	It is clear that the term $F\in\Omega^{(\alpha)}_{[0,\delta]}$. By \eqref{Estimates of solutions of nonlocal linear PDE} and \eqref{Estimates of solutions of nonlocal PDEs}, there exists a constant $c>0$ independent of $\delta$, such that 
	\begin{equation*}
		\lVert G\rVert^{(2+\alpha)}_{[0,\delta]}\leq c\sup\limits_{t\in[0,\delta]}\left\{\left|\psi(t,\cdot,\cdot)\right|^{(\alpha)}_{[0,t]\times\mathbb{R}^d}+\left|\psi_t(t,\cdot,\cdot)\right|^{(\alpha)}_{[0,t]\times\mathbb{R}^d}\right\}\doteq C^\prime, 
	\end{equation*}
	where $\psi(t,s,y)=F\big(t,s,y,g(t,y),g_y(t,y),g_{yy}(t,y),g(s,y),g_y(s,y),g_{yy}(s,y)\big)$. 
	
	Hence, we have 
	\begin{equation*}
		\lVert\Lambda(u)-g\rVert^{(2+\alpha)}_{[0,\delta]}\leq \frac{R}{2}+C^\prime.
	\end{equation*}
	Therefore for a suitably large $R$, $\Lambda$ is a contraction mapping $\mathcal{U}$ into itself, and it has a unique fixed point $u$ in $\mathcal{U}$ satisfying
	\begin{equation*} 
		\left\{
		\begin{array}{rcl}
			u_s(t,s,y) & = & F\big(t,s,y,u(t,s,y),u_y(t,s,y),u_{yy}(t,s,y), \\
			&& \qquad\qquad   u(s,s,y),u_y(s,s,y),u_{yy}(s,s,y)\big), \\
			u(t,0,y) & = & g(t,y),\quad 0\leq s\leq t\leq \delta,\quad y\in\mathbb{R}^d.
		\end{array}
		\right. 
	\end{equation*}
	
	\ \ 
	
	\noindent \textbf{(Uniqueness)} To complete the proof, we ought to show that $u$ is the unique solution of \eqref{Nonlocal fully nonlinear equation in Section 3} in $\Omega^{(2+\alpha)}_{[0,\delta]}$. It follows with similar arguments as in the proof of Theorem \ref{Well-posedness of (u,u_t)} for nonlocal linear PDEs, or by studying directly the following problem satisfied by the difference of two solutions $u^1$ and $u^2$:  
	\begin{equation*}
		\left\{
		\begin{array}{rcl}
			\left(u^1-u^2\right)_s(t,s,y) & = & F\big(t,s,y,u^1(t,s,y),u^1_y(t,s,y),u^1_{yy}(t,s,y), \\
			&&\qquad\qquad u^1(s,s,y),u^1_y(s,s,y),u^1_{yy}(s,s,y)\big)
			\\
			&&-F\big(t,s,y,u^2(t,s,y),u^2_y(t,s,y),u^2_{yy}(t,s,y), \\
			&&\qquad\qquad\quad  u^2(s,s,y),u^2_y(s,s,y),u^2_{yy}(s,s,y)\big), \\
			\left(u^1-u^2\right)(t,0,y) & = & 0,\quad 0\leq s\leq t\leq \delta,\quad y\in\mathbb{R}^d.
		\end{array}
		\right.
	\end{equation*}  
	By using the mean value theorem, the right hand side of the equation above can be expressed as a linear combination of $u^1-u^2$ and its first- and second-order partial derivatives with respect to $y$. In this way, it is transformed into a nonlocal linear equation studied in the previous section. The uniqueness of solutions of nonlocal linear PDEs promises that $u^1=u^2$ in $\Delta[0,\delta]\times\mathbb{R}^d$.
\end{proof}

\begin{remark}[\textbf{Maximally defined solutions}]
	We have proven the local well-posedness of \eqref{Nonlocal fully nonlinear equation in Section 3} in $\Delta[0,\delta]\times\mathbb{R}^d$ and thus the diagonal condition can be determined for $s\in[0,\delta]$. After that, the nonlocal fully nonlinear equation \eqref{Nonlocal fully nonlinear equation in Section 3} reduces to a classical local fully nonlinear PDEs parameterized by $t$. Then we take $\delta$ as initial time and $u(t,\delta,y)$ as initial datum, we can extend the solution to a larger time interval up to the maximal interval. It is analogous to the definition of the maximally defined solution of nonlocal linear PDEs in Remark \ref{Extension of solutions to a larger time interval}. It is noteworthy that the problem of existence at large for arbitrary initial data is a difficult task even in the local fully nonlinear case. The difficulty is caused by the fact that a priori estimate in a very high norm $|\cdot|^{(2+\alpha)}_{[a,b]\times\mathbb{R}^d}$ is needed to establish the existence at large. To this end, there should be severe restrictions on the nonlinearities. More details are discussed in \cite{Krylov1987,Lieberman1996}. 
\end{remark}

\begin{remark}[\textbf{Stability analysis}]
	Suppose $u$ and $\widehat{u}$ correspond to $(F,g)$ and $(\widehat{F},\widehat{g})$, respectively. Then the following estimate holds: 
	\begin{equation*}
		\lVert u-\widehat{u}\rVert^{(2+\alpha)}_{[0,\delta]}\leq c\left(\lVert F-\widehat{F}\rVert^{(\alpha)}_{[0,\delta]}+\lVert g-\widehat{g}\rVert^{(2+\alpha)}_{[0,\delta]}\right) 
	\end{equation*} 
	where both $F$ and $\widehat{F}$ are evaluated at the point $$\big(t,s,y,u(t,s,y),u_y(t,s,y),u_{yy}(t,s,y),u(s,s,y),u_y(s,s,y),s_{yy}(s,s,y)\big).$$ 
	To see this, note that the difference between $u$ and $\widehat{u}$ satisfies	
	\begin{equation*}
		\left\{
		\begin{array}{rcl}
			\left(u-\widehat{u}\right)_s(t,s,y) & = & \mathcal{L}\left(u-\widehat{u}\right)+F\big(t,s,y,u(t,s,y),u_y(t,s,y),u_{yy}(t,s,y), \\
			&&\qquad\qquad\qquad\qquad\qquad u(s,s,y),u_y(s,s,y),u_{yy}(s,s,y)\big)
			\\
			&&-\widehat{F}\big(t,s,y,\widehat{u}(t,s,y),\widehat{u}_y(t,s,y),\widehat{u}_{yy}(t,s,y), \\
			&&\qquad\qquad\qquad\qquad\qquad  \widehat{u}(s,s,y),\widehat{u}_y(s,s,y),\widehat{u}_{yy}(s,s,y)\big), \\
			
			\qquad & & -\mathcal{L}\left(u-\widehat{u}\right)  \\
			\left(u-\widehat{u}\right)(t,0,y) & = & \left(g-\widehat{g}\right)(t,y),\quad 0\leq s\leq t\leq \delta,\quad y\in\mathbb{R}^d.
		\end{array}
		\right.
	\end{equation*} 	
	Similar to the proof of Theorem \ref{Well-posedness of fully nonlinear equations}, we have 
	\begin{equation*}
		\lVert u-\widehat{u}\rVert^{(2+\alpha)}_{[0,\delta]}\leq c\left(\lVert g-\widehat{g}\rVert^{(2+\alpha)}_{[0,\delta]}+\lVert \varphi\rVert^{(\alpha)}_{[0,\delta]}\right)+\frac{1}{2}\lVert u-\widehat{u}\rVert^{(2+\alpha)}_{[0,\delta]}, 
	\end{equation*} 
	where $\varphi$ represents $F-\widehat{F}$ at $u$.
\end{remark}


\section{Stochastic Representation of Solutions of Nonlocal PDEs} \label{sec:stochrep}
In this section, we illustrate that the nonlocal PDEs \eqref{Nonlocal fully nonlinear equation} is closely connected with the theory of stochastic differential equations (SDEs). Recall that with the classical Feynman--Kac formula, a solution of a (local) linear parabolic PDE can be formulated as an expectation of a terminal stochastic process driven by a forward SDE, while a solution of a (local) semi-linear (resp. quasi-linear) PDE can be represented by adapted solutions of decoupled (resp. coupled) FBSDEs. In other words, there is a PDE-representation for adapted solutions of coupled FBSDEs and conversely, the FBSDEs can also provide a probabilistic interpretation and a numerical scheme for parabolic PDEs.

Parallel to the elegant connection between local parabolic PDEs and FBSDEs, there is also a key connection between nonlocal PDEs and a flow of FBSDEs (or BSVIEs). By exploring this connection, we can construct computational methods to seek for solutions of nonlocal PDEs numerically.

\subsection{Existing Feynman--Kac type results}
We first briefly review the existing results that connect a flow of FBSDEs with nonlocal PDEs in the literature. Given a $k$-dimensional Brownian motion $W(\cdot)$ defined in the completed filtered probability space $\left(\Omega,\mathcal{F},\mathbb{F},\mathbb{P}\right)$, let us consider a coupled FBSDE of the form: 
\begin{align}
	X(s) & = y+\int^s_0b(\tau,X(\tau),Y(\tau,\tau))d\tau+\int^s_0\sigma(\tau,X(\tau),Y(\tau,\tau))dW(\tau), \label{Forward SDE} \\
	Y(t,s) & = g(t,X(T))+\int^T_sh(t,\tau,X(\tau),Y(t,\tau),Y(\tau,\tau),Z(t,\tau))d\tau \label{Backward BSVIE} \\
	& \qquad \qquad \quad -\int^T_sZ^\top(t,\tau)dW(\tau), \quad 0\leq t\leq s\leq T,\quad y\in\mathbb{R}^d, \nonumber
\end{align}
Getting rid of the diagonal term $Y(\tau,\tau)$ in \eqref{Forward SDE} and \eqref{Backward BSVIE}, the FBSDEs above reduce to classical ones parameterized by $t\in[0,T]$, which essentially form a flow of FBSDEs. However, the dependence on $t$ and $Y(\tau,\tau)$ violates the flow property of FBSDEs as well as results in nonlocality of its Feynman--Kac-type representations. Heuristically speaking, a family of $\mathbb{R}^d\times\mathbb{R}\times\mathbb{R}^{k\times 1}$-valued random fields 
$$\left(X(\cdot),Y(\cdot,\cdot),Z(\cdot,\cdot)\right)=\big\{\left(X(s),Y(t,s),Z(t,s)\right):0\leq t\leq s\leq T\big\}$$ 
is called an adapted solution of the FBSDE system \eqref{Forward SDE}-\eqref{Backward BSVIE} if 
\begin{enumerate}
	\item $X(\cdot)$ is $\mathbb{F}$-adapted and continuous; 
	\item for each fixed $t\in[0,T]$, $Y(t,\cdot)$ is $\mathbb{F}$-adapted and continuous; 
	\item for each fixed $t\in[0,T]$, $Z(t,\cdot)$ is $\mathbb{F}$-progressively measurable; 
	\item the system holds in the usual It\^{o}'s sense for almost every $t\in[0,T]$. 
\end{enumerate}
Some well-posedness results and applications of the system have been studied in the literature. Such a class of BSDEs \eqref{Backward BSVIE}, the generator $h$ and the terminal term $g$ of which depend on $t$ and/or diagonal terms $Y(\tau,\tau)$, is also usually called as backward stochastic Volterra integral equation (BSVIE). The concept of BSVIEs was initially introduced in \cite{Lin2002} as an extension of classical BSDEs developed in \cite{Bismut1976,Pardoux1990}. Then it was developed in several other studies, including \cite{Aman2005,Yong2006,Anh2010,Ren2010,Djordjevi2013,Djordjevi2015,Hu2019}. Among them, \cite{Yong2006} extends BSVIEs to incorporate with general terminal function motivated by the optimal control of forward-SVIEs (FSVIEs). The regularity of BSVIEs in Hilbert spaces is studied in \cite{Anh2010}. A timely survey on BSVIEs is documented in \cite{Yong2013}. Recently, \cite{Overbeck2018,Wang2020c} develop a theory of path-dependent BSVIEs in a non-Markovian setting. As for applications, BSVIEs generalize the theory of stochastic differential utility to incorporate with state dependence and solve the corresponding TIC stochastic recursive control problems; see \cite{Wei2017,Wang2021,Hamaguchi2021}. Moreover, \cite{Yong2007,Wang2021a} study the applications of BSVIE to dynamic risk measures. 

Next, we discuss about several special situations of the coupled system \eqref{Forward SDE}-\eqref{Backward BSVIE}. Under some conditions, they will lead to some Feynman--Kac-type results in the existing literature. 
\begin{enumerate}
	\item When $b$, $\sigma$ and $h$ are all independent of the diagonal term $Y(\tau,\tau)$, then the system of \eqref{Forward SDE}-\eqref{Backward BSVIE} is reduced to a family of classical FBSDEs parameterized by $t\in[0,T]$, whose solution is linked with that of the following parameterized semilinear PDEs: 
	$$
	\left\{
	\begin{array}{lr}
		u_s(t,s,y)+\frac{1}{2}\mathrm{tr}\left\{\sigma(s,y)\sigma(s,y)^\top u_{yy}(t,s,y)\right\}+b^\top(s,y)u_y(t,s,y) \\
		\qquad\qquad\qquad\qquad\qquad
		+h(t,s,y,u(t,s,y),\sigma^\top(s,y)u_y(t,s,y))=0, \\
		u(t,T,y)=g(t,y), \quad 0\leq t\leq s\leq T,\quad y\in\mathbb{R}^d. 
	\end{array}
	\right.
	$$
	via the relations $Y(t,s)=u(t,s,X(s))$ and $Z(t,s)=\sigma^\top(s,X(s))u_y(t,s,X(s))$. In particular, when the parameter $t$ is taken over a singleton, the result is proved in \cite{Peng1992,Ma2002};  
	
	\item When $b$ and $\sigma$ do not depend on $Y(\tau,\tau)$ and $h$ does not include $Y(t,\tau)$, by letting $s=t$ in \eqref{Backward BSVIE}, \cite{Wang2019} links the system with a nonlocal PDE: 
	$$
	\left\{
	\begin{array}{lr}
		u_s(t,s,y)+\frac{1}{2}\mathrm{tr}\left\{\sigma(s,y)\sigma(s,y)^\top u_{yy}(t,s,y)\right\}+b^\top(s,y)u_y(t,s,y) \\
		\qquad\qquad\qquad\qquad\qquad
		+h\big(t,s,y,u(s,s,y),\sigma^\top(s,y)u_y(t,s,y)\big)=0, \\
		u(t,T,y)=g(t,y), \quad  0\leq t\leq s\leq T, \quad y\in\mathbb{R}^d. 
	\end{array}
	\right.
	$$
	via the relations $Y(t,t)=u(t,t,X(t))$ and $Z(t,s)=\sigma^\top(s,X(s))u_y(t,s,X(s))$;  
	
	\item When $b$ and $\sigma$ are independent of $Y(\tau,\tau)$, \cite{Wang2020} shows that the system corresponds to the following nonlocal PDE:
	$$
	\left\{
	\begin{array}{lr}
		u_s(t,s,y)+\frac{1}{2}\mathrm{tr}\left\{\sigma(s,y)\sigma(s,y)^\top u_{yy}(t,s,y)\right\}+b^\top(s,y)u_y(t,s,y) \\
		\qquad\qquad\qquad
		+h\big(t,s,y,u(t,s,y),u(s,s,y),\sigma^\top(s,y)u_y(t,s,y)\big)=0, \\
		u(t,T,y)=g(t,y), \quad  0\leq t\leq s\leq T, \quad y\in\mathbb{R}^d. 
	\end{array}
	\right.
	$$    
	indicated by $Y(t,s)=u(t,s,X(s))$ and $Z(t,s)=\sigma^\top(s,X(s))u_y(t,s,X(s))$. 
	
	\item Inspired by the previous works, \cite{Hamaguchi2020} concludes that the investigation of a (coupled) flow of forward-backward SDEs \eqref{Forward SDE}-\eqref{Backward BSVIE} will be reduced to the study of the nonlocal PDE of the following form:
	$$
	\left\{
	\begin{array}{lr}
		u_s(t,s,y)+\frac{1}{2}\mathrm{tr}\left\{\sigma\left(s,y,u(s,s,y)\right)\sigma\left(s,y,u(s,s,y)\right)^\top u_{yy}(t,s,y)\right\} \\
		\quad
		+b^\top\left(s,y,u(s,s,y)\right)u_y(t,s,y) \\
		\quad+h\big(t,s,y,u(t,s,y),u(s,s,y),\sigma^\top(s,y,u(s,s,y))u_y(t,s,y)\big)=0, \\
		u(t,T,y)=g(t,y), \qquad  0\leq t\leq s\leq T, \quad y\in\mathbb{R}^d.
	\end{array}
	\right.
	$$
\end{enumerate}

One can observe that the examples (1)-(4) above contain neither a nonlinearity of the highest-order term $u(t,s,y)$ nor a dependence of diagonal terms $u_y(s,s,y)$ and $u_{yy}(s,s,y)$. It is clear that our study about nonlocal PDEs in Section \ref{sec:linear} and Section \ref{sec:nonlinear} provides a general and unified treatment of this class of Feynman--Kac formulas and presents the necessity and significance of FBSDEs in a more general form than \eqref{Forward SDE}-\eqref{Backward BSVIE}.

\subsection{Flow of second-order FBSDEs} \label{sec:2FBVIEs}
In fact, there has been some work connecting classical local fully nonlinear PDEs with FBSDEs. The pioneering work of \cite{Cheridito2007} introduces the concept of second-order BSDEs (2BSDEs) that extends the Feynman--Kac formula to a fully-nonlinear setting. The key ingredient of the method is to introduce a new process that identifies the Hessian matrix of the solution of the corresponding PDE. \cite{Soner2011} provides an alternative formulation of 2BSDEs under a non-dominated family of mutually singular probability measures. By parameterizing the 2FBSDEs in \cite{Cheridito2007} or \cite{Soner2011} with $t$ and introducing some diagonal terms to them, the parallel concept of 2FBSVIEs or a flow of 2FBSDEs is highly expected to provide a probabilistic interpretation and numerical scheme for solutions of nonlocal fully nonlinear PDEs \eqref{Nonlocal fully nonlinear equation}.

To see this, we further impose regularity of the initial condition and thus we can obtain regularity of solutions of \eqref{Nonlocal fully nonlinear equation} as in the following lemma.
\begin{lemma}\label{Thrice differentiability}
	For any $k=1,\ldots,d$, suppose that $F_{y_k}$, $F_\mathcal{X}$ in Table \ref{tab:table1}, and $F_\mathcal{XY}$ in Table \ref{tab:table2} satisfy conditions \eqref{Uniform ellipticity condition of F 1}-\eqref{Lipschitz continuity of F}. If $g\in\Omega^{{(3+\alpha)}}_{[0,T]}$, then the unique solution $u\in\Omega^{{(2+\alpha)}}_{[0,\delta]}$ of \eqref{Nonlocal fully nonlinear equation in Section 3} satisfies that $u_{y_k}\in\Omega^{{(2+\alpha)}}_{[0,\delta]}$. 
\end{lemma}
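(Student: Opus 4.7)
The plan is to show that $v:=u_{y_k}$ solves a nonlocal \emph{linear} parabolic PDE of the form \eqref{Nonlocal linear PDE with t} whose coefficients, nonhomogeneous term, and initial datum satisfy the hypotheses of Corollary \ref{Schauder estimates}; the conclusion $u_{y_k}\in\Omega^{(2+\alpha)}_{[0,\delta]}$ then follows from the estimate \eqref{Estimates of solutions of nonlocal PDEs}. With the shorthand
$$\pi^*(t,s,y):=\big(t,s,y,u(t,s,y),u_y(t,s,y),u_{yy}(t,s,y),u(s,s,y),u_y(s,s,y),u_{yy}(s,s,y)\big),$$
formal differentiation of \eqref{Nonlocal fully nonlinear equation in Section 3} in $y_k$ produces
\begin{equation*}
\begin{split}
v_s(t,s,y)=&\sum_{i,j=1}^d F_{q_{ij}}(\pi^*)\,v_{y_iy_j}(t,s,y)+\sum_{i=1}^d F_{p_i}(\pi^*)\,v_{y_i}(t,s,y)+F_u(\pi^*)\,v(t,s,y) \\
&+\sum_{i,j=1}^d F_{n_{ij}}(\pi^*)\,v_{y_iy_j}(s,s,y)+\sum_{i=1}^d F_{m_i}(\pi^*)\,v_{y_i}(s,s,y) \\
&+F_l(\pi^*)\,v(s,s,y)+F_{y_k}(\pi^*),
\end{split}
\end{equation*}
with initial condition $v(t,0,y)=g_{y_k}(t,y)$.

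Next I would verify the hypotheses of Corollary \ref{Schauder estimates} for this equation. The ellipticity conditions \eqref{Uniform ellipticity condition 1}--\eqref{Uniform ellipticity condition 2} follow at once from \eqref{Uniform ellipticity condition of F 1}--\eqref{Uniform ellipticity condition of F 2} evaluated along $\pi^*$. Since $u\in\Omega^{(2+\alpha)}_{[0,\delta]}$, every argument of $F$ entering $\pi^*$ belongs to the relevant $C^{\alpha/2,\alpha}$ space in $(s,y)$ and admits a $t$-derivative of the same class; invoking the H\"older/Lipschitz continuity of the first derivatives $F_\mathcal{X}$ in Table \ref{tab:table1} together with the second derivatives $F_{\mathcal{X}\mathcal{Y}}$ in Table \ref{tab:table2} assumed in the present lemma, standard composition estimates yield $F_{q_{ij}}(\pi^*),\,F_{p_i}(\pi^*),\,F_u(\pi^*),\,F_{n_{ij}}(\pi^*),\,F_{m_i}(\pi^*),\,F_l(\pi^*)\in\Omega^{(\alpha)}_{[0,\delta]}$. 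The same reasoning, together with the newly imposed hypothesis on $F_{y_k}$, gives $F_{y_k}(\pi^*)\in\Omega^{(\alpha)}_{[0,\delta]}$; and $g\in\Omega^{(3+\alpha)}_{[0,T]}$ directly implies $g_{y_k}\in\Omega^{(2+\alpha)}_{[0,T]}$ through the natural one-derivative shift in the scale of Banach spaces of Section \ref{sec:normspace}. Corollary \ref{Schauder estimates} therefore provides a unique $\tilde v\in\Omega^{(2+\alpha)}_{[0,\delta']}$ solving the above linear equation on $\Delta[0,\delta']\times\mathbb{R}^d$ for some $\delta'>0$, and after possibly shrinking $\delta$ we may take $\delta'=\delta$.

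To identify $\tilde v$ with $u_{y_k}$ I would work with the spatial difference quotients $v^h(t,s,y):=h^{-1}\big(u(t,s,y+he_k)-u(t,s,y)\big)$ for $h\neq 0$. Applying the fundamental theorem of calculus to $F$ along the segment joining $\pi^*(t,s,y)$ and $\pi^*(t,s,y+he_k)$, each $v^h$ satisfies a nonlocal linear PDE whose coefficients and source converge in $\Omega^{(\alpha)}_{[0,\delta]}$ to those of the limiting equation, while the initial data $h^{-1}\big(g(t,y+he_k)-g(t,y)\big)$ converge to $g_{y_k}$ in $\Omega^{(2+\alpha)}_{[0,\delta]}$. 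The stability remark following Corollary \ref{Schauder estimates} then forces $v^h\to\tilde v$ in $\Omega^{(2+\alpha)}_{[0,\delta]}$, and since $v^h(t,s,y)\to u_{y_k}(t,s,y)$ pointwise we conclude $u_{y_k}=\tilde v\in\Omega^{(2+\alpha)}_{[0,\delta]}$. The main obstacle is precisely this identification step: one must ensure that the $h$-perturbed coefficients retain uniform ellipticity and enjoy uniform $\Omega^{(\alpha)}_{[0,\delta]}$ bounds for small $h$, and that the convergence rates are compatible with the composite estimate---both of which rely on exactly the additional H\"older/Lipschitz bounds on $F_{y_k}$ and on the $F_{\mathcal{X}\mathcal{Y}}$ entries of Table \ref{tab:table2} that the hypothesis of the lemma imposes.
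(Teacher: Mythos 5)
Your proposal follows essentially the same approach as the paper: differentiate the fully nonlinear equation in $y_k$ to obtain a nonlocal linear PDE for $u_{y_k}$ with coefficients $F_{q_{ij}}(\pi^*),\ldots,F_l(\pi^*)$ and source $F_{y_k}(\pi^*)$, check that these lie in $\Omega^{(\alpha)}_{[0,\delta]}$ and that $g_{y_k}\in\Omega^{(2+\alpha)}_{[0,T]}$, and invoke Corollary~\ref{Schauder estimates}. The paper's proof stops there; your additional difference-quotient argument, which rigorously identifies the Schauder solution $\tilde v$ with $u_{y_k}$, is a careful supplement to a step the paper leaves implicit.
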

\begin{proof}
	As Theorem \ref{Well-posedness of fully nonlinear equations} presents, the nonlocal fully nonlinear equation \eqref{Nonlocal fully nonlinear equation in Section 3} admits a unique solution $u$ in $\Omega^{{(2+\alpha)}}_{[0,\delta]}$. Next, for any $k=1,2,\cdots,d$, we can find $u_{y_k}$ satisfying: 
	\begin{equation} \label{u_(y_k) equation}   
		\left\{
		\begin{array}{rcl}
			\left(\frac{\partial u}{\partial y_k}\right)_s(t,s,y)&=&\sum^d\limits_{i,j=1} F_{q_{ij}}(\cdot)\left(\frac{\partial u}{\partial y_k}\right)_{y_iy_j}(t,s,y)+\sum^d\limits_{i=1} F_{p_i}(\cdot)\left(\frac{\partial u}{\partial y_k}\right)_{y_i}(t,s,y) \\
			&&+\sum^d\limits_{i,j=1}F_{n_{ij}}(\cdot)\left(\frac{\partial u}{\partial y_k}\right)_{y_iy_j}(s,s,y)+\sum^d\limits_{i=1}F_{m_i}(\cdot)\left(\frac{\partial u}{\partial y_k}\right)_{y_i}(s,s,y) \\
			&&+F_u(\cdot)\left(\frac{\partial u}{\partial y_k}\right)(t,s,y)+F_l(\cdot)\left(\frac{\partial u}{\partial y_k}\right)(s,s,y) +F_{y_k}(\cdot) \\
			
			\quad \\
			
			\left(\frac{\partial u}{\partial y_k}\right)(t,0,y)&=&g_{y_k}(t,y), \quad 0\leq s\leq t\leq T, \quad y\in\mathbb{R}^d.  
		\end{array}
		\right.
	\end{equation}    
	where all $F_{y_k}$ and $F_{\mathcal{X}}(\cdot)$ are evaluated at $$(t,s,y,u(t,s,y),u_y(t,s,y),u_{yy}(t,s,y),u(s,s,y),u_y(s,s,y),u_{yy}(s,s,y)).$$    
	
	Given $u\in\Omega^{{(2+\alpha)}}_{[0,\delta]}$, all coefficient functions $F_\mathcal{X}(\cdot)$ in \eqref{u_(y_k) equation} and the nonhomogeneous term $F_{y_k}(\cdot)$ belong to $\Omega^{(\alpha)}_{[0,\delta]}$. Moreover, since the initial condition $g_{y_k}(t,y)\in\Omega^{(2+\alpha)}_{[0,\delta]}$ by $g\in\Omega^{{(3+\alpha)}}_{[0,T]}$, applying Corollary \ref{Schauder estimates} to \eqref{u_(y_k) equation} completes the proof.      
\end{proof}

Next, we consider a backward nonlocal fully nonlinear PDE of the form
\begin{equation} \label{Backward nonlocal fully nonlinear equation}  
	\left\{
	\begin{array}{rcl}
		u_s(t,s,y) & = & F\big(t,s,y,u(t,s,y),u_y(t,s,y),u_{yy}(t,s,y), \\
		& & \qquad \qquad u(s,s,y),u_y(s,s,y),u_{yy}(s,s,y)\big), \\
		u(t,T,y) & = & g(t,y),\quad t_0\leq t\leq s\leq T,\quad y\in\mathbb{R}^d.
	\end{array}
	\right.
\end{equation} 
where $F$ has enough regularities of \eqref{Uniform ellipticity condition of F 1}-\eqref{Lipschitz continuity of F} and $t_0$ is suitable in the sense that $[t_0,T]$ is a subset of the time interval for the maximally defined solution of \eqref{Backward nonlocal fully nonlinear equation}. Then the following theorem reveals the relationship between nonlocal fully nonlinear PDEs like \eqref{Backward nonlocal fully nonlinear equation} and flow of 2FBSDEs \eqref{Flow of 2FBSDEs}. 

\begin{theorem} \label{F-K formula} 
	For any $k=1,\ldots,d$, suppose that $F_{y_k}$, $F_\mathcal{X}$ in Table \ref{tab:table1}, and $F_\mathcal{XY}$ in Table \ref{tab:table2} satisfy conditions \eqref{Uniform ellipticity condition of F 1}-\eqref{Lipschitz continuity of F},  $\sigma(s,y)\in C^{1,2}([t_0,T]\times\mathbb{R}^d)$, and $g\in\Omega^{{(3+\alpha)}}_{[t_0,T]}$. Then, \eqref{Backward nonlocal fully nonlinear equation} admits a unique solution $u(t,s,y)$ that is first-order continuously differentiable in $s$ and third-order continuously differentiable with respect to $y$ in $\nabla[t_0,T]\times\mathbb{R}^d$. Moreover, let
	\begin{eqnarray} \label{F-K formula for 2FBSVIE}
		& Y(t,s) := u(t,s,X(s)), \qquad & Z(t,s) :=  \left(\sigma^\top u_y\right)(t,s,X(s)), \\
		&\Gamma(t,s) :=\left(\sigma^\top\left(\sigma^\top u_y\right)_y\right)(t,s,X(s)), \qquad   & A(t,s) := \mathcal{D}\left(\sigma^\top u_y\right)(t,s,X(s)), \nonumber 
	\end{eqnarray} 
	where $\left(\sigma^\top u\right)(t,s,y)=\sigma^\top(s,y)u(t,s,y)$ and the operator $\mathcal{D}$ is defined by 
	\begin{equation*}
		\mathcal{D}\varphi=\varphi_s+\frac{1}{2}\sum^d_{i,j=1}\left(\sigma\sigma^\top\right)_{ij}\frac{\partial^2\varphi}{\partial y_i\partial y_j}+\sum^d_{i=1}b_i\frac{\partial \varphi}{\partial y_i},
	\end{equation*}
	then the family of random fields $\left(X(\cdot),Y(\cdot,\cdot),Z(\cdot,\cdot),\Gamma(\cdot,\cdot),A(\cdot,\cdot)\right)$ is an adapted solution of the following flow of 2FBSDEs: 
	\begin{align} \label{Flow of 2FBSDEs}
		X(s) & = y+\int^s_{t_0}b(\tau,X(\tau))d\tau+\int^s_{t_0}\sigma(\tau,X(\tau))dW(\tau), \\
		Y(t,s) & = g(t,X(T))-\int^T_s\mathbb{F}(t,\tau,X(\tau),Y(t,\tau),Y(\tau,\tau),Z(t,\tau),Z(\tau,\tau),\Gamma(t,\tau),\Gamma(\tau,\tau))d\tau \nonumber \\
		& \qquad \qquad \quad -\int^T_sZ^\top(t,\tau)dW(\tau), \nonumber \\
		Z(t,s) & = Z(t,t_0)+\int^s_{t_0}A(t,\tau)d\tau+\int^s_{t_0}\Gamma(t,\tau)dW(\tau), \quad t_0\leq t\leq s\leq T,\quad y\in\mathbb{R}^d \nonumber   
	\end{align} 
	where $\mathbb{F}$ is defined by 
	\begin{equation}
		\begin{split}
			& \mathbb{F}(t,\tau,X(\tau),Y(t,\tau),Y(\tau,\tau),Z(t,\tau),Z(\tau,\tau),\Gamma(t,\tau),\Gamma(\tau,\tau)) \\
			=& \overline{F}(t,\tau,X(\tau),u(t,\tau,X(\tau)),u_y(t,\tau,X(\tau)),u_{yy}(t,\tau,X(\tau)), \\
			& \qquad\qquad\qquad  u(\tau,\tau,X(\tau)),u_y(\tau,\tau,X(\tau)),u_{yy}(\tau,\tau,X(\tau))), 
		\end{split}
	\end{equation}
	with the definition of $\overline{F}$
	\begin{equation*}
		\begin{split}
			& \overline{F}(t,\tau,y,u(t,\tau,y),u_y(t,\tau,y),u_{yy}(t,\tau,y),u(\tau,\tau,y),u_y(\tau,\tau,y),u_{yy}(\tau,\tau,y)) \\
			=& F(t,\tau,y,u(t,\tau,y),u_y(t,\tau,y),u_{yy}(t,\tau,y),u(\tau,\tau,y),u_y(\tau,\tau,y),u_{yy}(\tau,\tau,y)) \\
			& \qquad +\frac{1}{2}\sum^d_{i,j=1}\left(\sigma\sigma^\top\right)_{ij}(\tau,y)\frac{\partial^2 u}{\partial y_i\partial y_j}(t,\tau,y)+\sum^d_{i=1}b_i(\tau,y)\frac{\partial u}{\partial y_i}(t,\tau,y). 
		\end{split}
	\end{equation*}
\end{theorem}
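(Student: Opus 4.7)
The plan is to combine the well-posedness and regularity theory of Section \ref{sec:nonlinear} with It\^{o}'s formula applied along the forward diffusion $X(\cdot)$. The proof naturally splits into three stages: (i) establishing the existence and regularity of $u$ on the triangular domain $\{t_0\le t\le s\le T\}$; (ii) recovering the BSDE for $Y$ via It\^{o}'s formula on $u(t,s,X(s))$; and (iii) recovering the forward SDE for $Z$ via a second application of It\^{o}'s formula to $(\sigma^\top u_y)(t,s,X(s))$.

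For stage (i), the backward equation \eqref{Backward nonlocal fully nonlinear equation} can be recast as an instance of the forward problem \eqref{Nonlocal fully nonlinear equation in Section 3} through a standard time reversal (with the appropriate sign flip on $F$ so that \eqref{Uniform ellipticity condition of F 1}--\eqref{Uniform ellipticity condition of F 2} carry over to the transformed equation); Theorem \ref{Well-posedness of fully nonlinear equations} together with the extension remark then yields a unique solution $u\in\Omega^{(2+\alpha)}_{[t_0,T]}$, and the hypothesis on $t_0$ is exactly the statement that $[t_0,T]$ lies within that maximally defined interval. The upgraded smoothness $g\in\Omega^{(3+\alpha)}_{[t_0,T]}$, combined with the hypothesis that $F_{y_k}$ satisfies \eqref{Uniform ellipticity condition of F 1}--\eqref{Lipschitz continuity of F}, permits invoking Lemma \ref{Thrice differentiability} componentwise, yielding $u_{y_k}\in\Omega^{(2+\alpha)}_{[t_0,T]}$ for each $k=1,\ldots,d$. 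This is precisely the claimed $C^{1}$ regularity in $s$ and $C^{3}$ regularity in $y$.

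For stage (ii), since $u(t,\cdot,\cdot)\in C^{1,2}$ for each fixed $t$, It\^{o}'s formula applied to $Y(t,s)=u(t,s,X(s))$ gives
\begin{equation*}
dY(t,s)=\Big[u_s+b^\top u_y+\tfrac{1}{2}\mathrm{tr}(\sigma\sigma^\top u_{yy})\Big](t,s,X(s))\,ds+(\sigma^\top u_y)^\top(t,s,X(s))\,dW(s).
\end{equation*}
Substituting $u_s=F(\cdot)$ from \eqref{Backward nonlocal fully nonlinear equation}, the bracketed drift coincides with $\overline F(t,s,X(s),\ldots)$, which under the identifications $Y=u$, $Z=\sigma^\top u_y$ and $\Gamma=\sigma^\top(\sigma^\top u_y)_y$ becomes exactly $\mathbb F(t,s,X(s),Y(t,s),Y(s,s),Z(t,s),Z(s,s),\Gamma(t,s),\Gamma(s,s))$. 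Integrating from $s$ to $T$ and using the terminal condition $Y(t,T)=u(t,T,X(T))=g(t,X(T))$ reproduces the BSDE in \eqref{Flow of 2FBSDEs}.

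For stage (iii), the extra spatial regularity from Lemma \ref{Thrice differentiability} together with $\sigma\in C^{1,2}$ ensure that the vector field $\varphi(s,y):=(\sigma^\top u_y)(t,s,y)$ lies in $C^{1,2}$ for each fixed $t$. Applying It\^{o}'s formula componentwise to $Z(t,s)=\varphi(s,X(s))$ and reading off the drift and diffusion terms via the definitions of $\mathcal D$, $A$, and $\Gamma$ gives
\begin{equation*}
dZ(t,s)=\mathcal D\varphi(s,X(s))\,ds+(\sigma^\top\varphi_y)(s,X(s))\,dW(s)=A(t,s)\,ds+\Gamma(t,s)\,dW(s),
\end{equation*}
and integrating from $t_0$ to $s$ produces the forward equation for $Z$ in \eqref{Flow of 2FBSDEs}. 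Adaptedness and continuity of the tuple $(X,Y,Z,\Gamma,A)$ follow from continuity of $u$ and its spatial and temporal derivatives combined with the adaptedness of $X$. The main obstacle I anticipate is stage (i): setting up the time reversal so that the sign conventions of the ellipticity hypotheses are respected, and carefully tracking that the third-order spatial regularity supplied by Lemma \ref{Thrice differentiability} propagates to $C^{1,2}$ regularity of $\sigma^\top u_y$ as demanded by the second It\^{o} expansion in stage (iii). Once stage (i) is secured, stages (ii) and (iii) reduce to direct It\^{o} calculus.
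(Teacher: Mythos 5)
Your proposal follows the paper's proof essentially verbatim: stage (i) invokes the regularity theory of Section~\ref{sec:nonlinear} (Corollary~\ref{Schauder estimates} together with Lemma~\ref{Thrice differentiability}) to secure the $C^{1}$-in-$s$, $C^{3}$-in-$y$ smoothness, and stages (ii)--(iii) carry out the same two It\^{o} expansions on $\tau\mapsto u(t,\tau,X(\tau))$ over $[s,T]$ and $\tau\mapsto(\sigma^\top u_y)(t,\tau,X(\tau))$ over $[t_0,s]$ that the paper uses to read off the backward equation for $Y$ and the forward equation for $Z$. If anything you are slightly more explicit than the paper about the time-reversal step that recasts the backward problem \eqref{Backward nonlocal fully nonlinear equation} as an instance of the forward problem \eqref{Nonlocal fully nonlinear equation in Section 3} and the attendant sign bookkeeping needed to preserve the ellipticity hypotheses \eqref{Uniform ellipticity condition of F 1}--\eqref{Uniform ellipticity condition of F 2}, which the paper leaves implicit.
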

\begin{proof}
	First, under the regularity assumptions of $F$ and $g$, Corollary \ref{Schauder estimates} and Lemma \ref{Thrice differentiability} guarantee that there exists a unique solution $u(t,s,y)$ of \eqref{Backward nonlocal fully nonlinear equation}, which is first-order continuously differentiable in $s$ and third-order continuously differentiable with respect to $y$. Consequently, the family of random fields $\left(X,Y,Z,\Gamma,A\right)$ defined by \eqref{F-K formula for 2FBSVIE} is well-defined (adapted).
	
	Next, we show that the random field solves the flow of 2FBSDEs, i.e. \eqref{Flow of 2FBSDEs}. For any fixed $(t,s)\in \nabla[t_0,T]$, we apply the It\^{o}'s formula to the map $\tau\to u(t,\tau,X(\tau))$ on $[s,T]$. Then we have 
	\begin{equation*}
		\begin{split}
			& du(t,\tau,X(\tau)) \\
			=& \Big[u_s(t,\tau,X(\tau))+\sum^d_{i=1}b_i(\tau,X(\tau))\frac{\partial u}{\partial y_i}(t,\tau,X(\tau)) \\
			& ~~+\frac{1}{2}\sum^d_{i,j=1}\left(\sigma\sigma^\top\right)_{ij}(\tau,X(\tau))\frac{\partial^2 u}{\partial y_i\partial y_j}(t,\tau,X(\tau))\Big]d\tau+u^\top_y(t,\tau,X(\tau))\sigma(\tau,X(\tau))dW(\tau) \\
			=& \Big[F(t,\tau,X(\tau),u(t,\tau,X(\tau)),u_y(t,\tau,X(\tau)),u_{yy}(t,\tau,X(\tau)), \\
			& \qquad\qquad\qquad u(\tau,\tau,X(\tau)),u_y(\tau,\tau,X(\tau)),u_{yy}(\tau,\tau,X(\tau)) \\
			& ~~+\frac{1}{2}\sum^d_{i,j=1}\left(\sigma\sigma^\top\right)_{ij}(\tau,X(\tau))\frac{\partial^2 u}{\partial y_i\partial y_j}(t,\tau,X(\tau))+\sum^d_{i=1}b_i(\tau,X(\tau))\frac{\partial u}{\partial y_i}(t,\tau,X(\tau))\Big]d\tau \\
			& ~~+u^\top_y(t,\tau,X(\tau))\sigma(\tau,X(\tau))dW(\tau) \\
			=& \mathbb{F}(t,\tau,X(\tau),Y(t,\tau),Y(\tau,\tau),Z(t,\tau),Z(\tau,\tau),\Gamma(t,\tau),\Gamma(\tau,\tau))d\tau \\
			& \qquad +Z^\top(t,\tau)dW(\tau)
		\end{split}
	\end{equation*}
	which indicates $dY(t,\tau)=\mathbb{F}ds+Z^\top(t,\tau)dW(\tau)$. Similarly, for any fixed $(t,s)\in \nabla[t_0,T]$, by applying the It\^{o}'s formula to $\tau\to \left(\sigma^\top u_y\right)(t,\tau,X(\tau))$ on $[t_0,s]$, we can also verify that $dZ(t,\tau)=A(t,\tau)d\tau+\Gamma(t,\tau)dW(\tau)$. Hence, \eqref{F-K formula for 2FBSVIE} is an adapted solution of \eqref{Flow of 2FBSDEs}.     
\end{proof}

We make three important observations about the stochastic system \eqref{Flow of 2FBSDEs}: (I) When the generator $\mathbb{F}$ is independent of diagonal terms, i.e. $Y(\tau,\tau)$, $Z(\tau,\tau)$, and $\Gamma(\tau,\tau)$, the flow of FBSDEs \eqref{Flow of 2FBSDEs} are reduced to a family of 2FBSDEs parameterized by $t$, which is exactly the 2FBSDE in \cite{Kong2015} and equivalent to the ones in \cite{Cheridito2007} for any fixed $t$; (II) \eqref{Flow of 2FBSDEs} is more general than the system of \eqref{Forward SDE}-\eqref{Backward BSVIE} since it allows for a nonlinearity of $(Y(t,\tau),Z(t,\tau),\Gamma(t,\tau))$ by introducing an additional SDE and also contains their diagonal terms $(Y(\tau,\tau),Z(\tau,\tau),\Gamma(\tau,\tau))$ in almost arbitrary way; (III) Theorem \ref{F-K formula} shows how to solve the flow of 2FBSDE \eqref{Flow of 2FBSDEs} from the perspective of nonlocal PDEs. Inspired by \cite{Cheridito2007} and \cite{Soner2011}, the opposite implication of solutions (from 2FBSDEs to PDE) is likely valid by establishing the well-posedness of \eqref{Flow of 2FBSDEs} in the theoretical framework of SDEs. However, it is beyond the scope of this paper, while we will prove the existence and uniqueness of \eqref{Flow of 2FBSDEs} in our future publications.


\section{Conclusions} \label{sec:conclusion}
This paper studies nonlocal parabolic PDEs originated from time-inconsistent problems in the theory of stochastic differential equations and control problems. Tailor-made norms and Banach spaces are proposed for the study of nonlocality and time-inconsistency. Based on which, the first key step is to establish the well-posedness of nonlocal linear PDEs. Subsequently, a Schauder-type prior estimate is provided to control the behaviour of solutions of nonlocal linear PDEs. The study of the linear case laid a solid foundation for studying nonlocal fully nonlinear PDEs by the linearization method. By constructing a contraction in the space of possible solutions, the Banach's fixed point theorem promises the existence and uniqueness of nonlocal fully nonlinear equations.

Thanks to the close connections among PDE, FBSDE, and stochastic controls, our well-posedness results of nonlocal PDEs contribute significantly to the development of the related fields: (1) From the perspective of TIC stochastic control problems, the main breakthrough is that the nonlocal fully nonlinear PDEs in this paper allow control enter the diffusion of state process, which extends essentially the restricted results in the existing literature. The solvability of nonlocal fully nonlinear PDEs indicates directly the well-posedness of the related HJB-type equations in TIC problems, including equilibrium HJB equations and HJB systems, both of which are used to identify the subgame perfect equilibrium; (2) In terms of the theory of FBSDEs, our study of nonlocal PDEs provides a general and unified treatment for the Feynman--Kac formulas of a flow of 2FBSDEs \eqref{Flow of 2FBSDEs}. The advance in nonlocal PDEs of this paper provides a new insight into the study of the flow of FBSDEs or 2FBSDEs from the perspective of PDEs. Besides of pushing the frontiers of PDE, SDE, and stochastic control theory, our work sheds light on some finance and decision-making theories that involve reference points (initial-time dependence), such as prospect and regret theories. 




\bibliographystyle{elsarticle-num} 
\bibliography{NonlocalityRef}

\begin{thebibliography}{10}
\expandafter\ifx\csname url\endcsname\relax
  \def\url#1{\texttt{#1}}\fi
\expandafter\ifx\csname urlprefix\endcsname\relax\def\urlprefix{URL }\fi
\expandafter\ifx\csname href\endcsname\relax
  \def\href#1#2{#2} \def\path#1{#1}\fi

\bibitem{Yong1999}
J.~Yong, X.~Y. Zhou, Stochastic Controls : {Hamiltonian} Systems and {HJB}
  Equations, 1st Edition, Vol.~43 of Stochastic Modelling and Applied
  Probability, Springer New York, New York, NY, 1999.
\newblock \href {https://doi.org/10.1007/978-1-4612-1466-3}
  {\path{doi:10.1007/978-1-4612-1466-3}}.

\bibitem{Markowitz1952}
H.~Markowitz, Portfolio selection, The Journal of Finance 7~(1) (1952) 77--91.
\newblock \href {https://doi.org/10.1111/j.1540-6261.1952.tb01525.x}
  {\path{doi:10.1111/j.1540-6261.1952.tb01525.x}}.

\bibitem{Frederick2002}
S.~Frederick, G.~Loewenstein, T.~O'donoghue, Time discounting and time
  preference: A critical review, Journal of Economic Literature 40~(2) (2002)
  351--401.
\newblock \href {https://doi.org/10.1257/jel.40.2.351}
  {\path{doi:10.1257/jel.40.2.351}}.

\bibitem{Kahneman1979}
D.~Kahneman, A.~Tversky, Prospect theory: An analysis of decision under risk,
  Econometrica 47~(2) (1979) 263.
\newblock \href {https://doi.org/10.2307/1914185} {\path{doi:10.2307/1914185}}.

\bibitem{Thaler1981}
R.~H. Thaler, Some empirical evidence on dynamic inconsistency, Economics
  Letters 8~(3) (1981) 201--207.
\newblock \href {https://doi.org/10.1016/0165-1765(81)90067-7}
  {\path{doi:10.1016/0165-1765(81)90067-7}}.

\bibitem{Laibson1997}
D.~Laibson, Golden eggs and hyperbolic discounting, The Quarterly Journal of
  Economics 112~(2) (1997) 443--478.
\newblock \href {https://doi.org/10.1162/003355397555253}
  {\path{doi:10.1162/003355397555253}}.

\bibitem{Strotz1955}
R.~H. Strotz, Myopia and inconsistency in dynamic utility maximization, The
  Review of Economic Studies 23~(3) (1955) 165--180.
\newblock \href {https://doi.org/10.2307/2295722} {\path{doi:10.2307/2295722}}.

\bibitem{Pollak1968}
R.~A. Pollak, Consistent planning, The Review of Economic Studies 35~(2) (1968)
  201.
\newblock \href {https://doi.org/10.2307/2296548} {\path{doi:10.2307/2296548}}.

\bibitem{Bjoerk2017}
T.~Bj\"{o}rk, M.~Khapko, A.~Murgoci, On time-inconsistent stochastic control in
  continuous time, Finance and Stochastics 21~(2) (2017) 331--360.
\newblock \href {https://doi.org/10.1007/s00780-017-0327-5}
  {\path{doi:10.1007/s00780-017-0327-5}}.

\bibitem{Yong2012}
J.~Yong, Time-inconsistent optimal control problems and the equilibrium {HJB}
  equation, Mathematical Control {\&} Related Fields 2~(3) (2012) 271--329.
\newblock \href {https://doi.org/10.3934/mcrf.2012.2.271}
  {\path{doi:10.3934/mcrf.2012.2.271}}.

\bibitem{Wei2017}
Q.~Wei, J.~Yong, Z.~Yu, Time-inconsistent recursive stochastic optimal control
  problems, {SIAM} Journal on Control and Optimization 55~(6) (2017)
  4156--4201.
\newblock \href {https://doi.org/10.1137/16m1079415}
  {\path{doi:10.1137/16m1079415}}.

\bibitem{Yan2019}
W.~Yan, J.~Yong, Time-inconsistent optimal control problems and related issues,
  in: G.~Yin, Q.~Zhang (Eds.), Modeling, Stochastic Control, Optimization, and
  Applications, Vol. 164, Springer International Publishing, 2019, pp.
  533--569.
\newblock \href {https://doi.org/10.1007/978-3-030-25498-8\_22}
  {\path{doi:10.1007/978-3-030-25498-8\_22}}.

\bibitem{Kac1949}
M.~Kac, On distributions of certain {Wiener} functionals, Transactions of the
  American Mathematical Society 65~(1) (1949) 1--13.
\newblock \href {https://doi.org/10.1090/s0002-9947-1949-0027960-x}
  {\path{doi:10.1090/s0002-9947-1949-0027960-x}}.

\bibitem{Hernandez2020}
C.~Hernández, D.~Possamaï, Me, myself and i: A general theory of
  non-{Markovian} time-inconsistent stochastic control for sophisticated
  agents, arXiv: 2002.12572 (Feb. 2020).
\newblock \href {http://arxiv.org/abs/2002.12572} {\path{arXiv:2002.12572}}.

\bibitem{Mei2020}
H.~Mei, C.~Zhu, Closed-loop equilibrium for time-inconsistent
  {McKean}--{Vlasov} controlled problem, {SIAM} Journal on Control and
  Optimization 58~(6) (2020) 3842--3867.
\newblock \href {https://doi.org/10.1137/20m1319796}
  {\path{doi:10.1137/20m1319796}}.

\bibitem{Wang2021}
H.~Wang, J.~Yong, Time-inconsistent stochastic optimal control problems and
  backward stochastic {Volterra} integral equations, {ESAIM}: Control,
  Optimisation and Calculus of Variations 27 (2021) 22.
\newblock \href {https://doi.org/10.1051/cocv/2021027}
  {\path{doi:10.1051/cocv/2021027}}.

\bibitem{Hamaguchi2020}
Y.~Hamaguchi, Small-time solvability of a flow of forward-backward stochastic
  differential equations, Applied Mathematics {\&} Optimization 84~(1) (2021)
  567--588.

\bibitem{Wang2020}
H.~Wang, Extended backward stochastic {Volterra} integral equations,
  quasilinear parabolic equations, and {Feynman}--{Kac} formula, Stochastics
  and Dynamics 21~(01) (2020) 2150004.
\newblock \href {https://doi.org/10.1142/s0219493721500040}
  {\path{doi:10.1142/s0219493721500040}}.

\bibitem{Ma1994}
J.~Ma, P.~Protter, J.~Yong, Solving forward-backward stochastic differential
  equations explicitly -- a four step scheme, Probability Theory and Related
  Fields 98~(3) (1994) 339--359.
\newblock \href {https://doi.org/10.1007/bf01192258}
  {\path{doi:10.1007/bf01192258}}.

\bibitem{Wang2019}
T.~Wang, J.~Yong, Backward stochastic {Volterra} integral
  equations{\textemdash}representation of adapted solutions, Stochastic
  Processes and their Applications 129~(12) (2019) 4926--4964.
\newblock \href {https://doi.org/10.1016/j.spa.2018.12.016}
  {\path{doi:10.1016/j.spa.2018.12.016}}.

\bibitem{Friedman1964}
A.~Friedman, Partial Differential Equations of Parabolic Type, 1st Edition,
  Prentice-Hall, Englewood Cliffs, N.J., 1964.

\bibitem{Ladyzhanskaya1968}
O.~A. Lady$\check{z}$enskaja, V.~A. Solonnikov, N.~N. Ural'ceva, Linear and
  Quasi-linear Equations of Parabolic Type, 1st Edition, American Mathematical
  Society, 1968.

\bibitem{Lunardi1995}
A.~Lunardi, Analytic Semigroups and Optimal Regularity in Parabolic Problems,
  1st Edition, Springer Basel, 1995.

\bibitem{Eidelman1969}
S.~D. \.{E}\v{i}del'man, Parabolic Systems, North-Holland and Pub.
  Co.Wolters-Noordhoff, Amsterdam, Groningen, 1969.

\bibitem{Khudyaev1963}
S.~Khudyaev, The first boundary-value problem for non-linear parabolic
  equations, in: Doklady Akademii Nauk, Vol. 149, Russian Academy of Sciences,
  1963, pp. 535--538.

\bibitem{Sopolov1970}
N.~N. \v{S}opolov, The first boundary value problem for nonlinear parabolic
  equations of arbitrary order, C. R. Acad. Bulgare Sci. 23 (1970) 899--902.

\bibitem{Kruzhkov1975}
S.~N. Kruzhkov, A.~Castro, M.~L. Morales, {Schauder}-type estimates and
  existence theorems for the solution of basic problems for linear and
  nonlinear parabolic equations, in: Doklady Akademii Nauk, Vol. 220, Russian
  Academy of Sciences, 1975, pp. 277--280.

\bibitem{Krylov1987}
N.~V. Krylov, Nonlinear Elliptic and Parabolic Equations of the Second Order,
  1st Edition, Vol.~7 of Mathematics and its Applications, Springer
  Netherlands, 1987.

\bibitem{Lieberman1996}
G.~M. Lieberman, Second Order Parabolic Differential Equations, 1st Edition,
  World Scientific, 1996.
\newblock \href {https://doi.org/10.1142/3302} {\path{doi:10.1142/3302}}.

\bibitem{Lin2002}
J.~Lin, Adapted solution of a backward stochastic nonlinear {Volterra} integral
  equation, Stochastic Analysis and Applications 20~(1) (2002) 165--183.
\newblock \href {https://doi.org/10.1081/sap-120002426}
  {\path{doi:10.1081/sap-120002426}}.

\bibitem{Bismut1976}
J.-M. Bismut, Th{\'{e}}orie probabiliste du contr{\^{o}}le des diffusions,
  Memoirs of the American Mathematical Society 4~(167) (1976).
\newblock \href {https://doi.org/10.1090/memo/0167}
  {\path{doi:10.1090/memo/0167}}.

\bibitem{Pardoux1990}
\'{E}tienne Pardoux, S.~Peng, Adapted solution of a backward stochastic
  differential equation, Systems {\&} Control Letters 14~(1) (1990) 55--61.
\newblock \href {https://doi.org/10.1016/0167-6911(90)90082-6}
  {\path{doi:10.1016/0167-6911(90)90082-6}}.

\bibitem{Aman2005}
A.~Aman, M.~Nzi, Backward stochastic nonlinear {Volterra} integral equation
  with local lipschitz drift, Probability and Mathematical Statistics 25~(1)
  (2005) 105--127.

\bibitem{Yong2006}
J.~Yong, Backward stochastic {Volterra} integral equations and some related
  problems, Stochastic Processes and their Applications 116~(5) (2006)
  779--795.
\newblock \href {https://doi.org/10.1016/j.spa.2006.01.005}
  {\path{doi:10.1016/j.spa.2006.01.005}}.

\bibitem{Anh2010}
V.~V. Anh, W.~Grecksch, J.~Yong, Regularity of backward stochastic {Volterra}
  integral equations in {Hilbert} spaces, Stochastic Analysis and Applications
  29~(1) (2010) 146--168.
\newblock \href {https://doi.org/10.1080/07362994.2011.532046}
  {\path{doi:10.1080/07362994.2011.532046}}.

\bibitem{Ren2010}
Y.~Ren, On solutions of backward stochastic {Volterra} integral equations with
  jumps in {Hilbert} spaces, Journal of Optimization Theory and Applications
  144~(2) (2010) 319--333.
\newblock \href {https://doi.org/10.1007/s10957-009-9596-2}
  {\path{doi:10.1007/s10957-009-9596-2}}.

\bibitem{Djordjevi2013}
J.~Djordjevi{\'{c}}, S.~Jankovi{\'{c}}, On a class of backward stochastic
  {Volterra} integral equations, Applied Mathematics Letters 26~(12) (2013)
  1192--1197.
\newblock \href {https://doi.org/10.1016/j.aml.2013.07.006}
  {\path{doi:10.1016/j.aml.2013.07.006}}.

\bibitem{Djordjevi2015}
J.~Djordjevi{\'{c}}, S.~Jankovi{\'{c}}, Backward stochastic {Volterra} integral
  equations with additive perturbations, Applied Mathematics and Computation
  265~(C) (2015) 903--910.
\newblock \href {https://doi.org/10.1016/j.amc.2015.05.077}
  {\path{doi:10.1016/j.amc.2015.05.077}}.

\bibitem{Hu2019}
Y.~Hu, B.~{\O}ksendal, Linear {Volterra} backward stochastic integral
  equations, Stochastic Processes and their Applications 129~(2) (2019)
  626--633.
\newblock \href {https://doi.org/10.1016/j.spa.2018.03.016}
  {\path{doi:10.1016/j.spa.2018.03.016}}.

\bibitem{Yong2013}
J.~Yong, Backward stochastic {Volterra} integral equations {\textemdash} a
  brief survey, Applied Mathematics-A Journal of Chinese Universities 28~(4)
  (2013) 383--394.
\newblock \href {https://doi.org/10.1007/s11766-013-3189-4}
  {\path{doi:10.1007/s11766-013-3189-4}}.

\bibitem{Overbeck2018}
L.~Overbeck, J.~A.~L. R\"{o}der, Path-dependent backward stochastic {Volterra}
  integral equations with jumps, differentiability and duality principle,
  Probability, Uncertainty and Quantitative Risk 3~(1) (Jun. 2018).
\newblock \href {https://doi.org/10.1186/s41546-018-0030-2}
  {\path{doi:10.1186/s41546-018-0030-2}}.

\bibitem{Wang2020c}
H.~Wang, J.~Yong, J.~Zhang, Path dependent {Feynman}--{Kac} formula for forward
  backward stochastic {Volterra} integral equations, arXiv: 2004.05825 (Apr.
  2020).
\newblock \href {http://arxiv.org/abs/2004.05825} {\path{arXiv:2004.05825}}.

\bibitem{Hamaguchi2021}
Y.~Hamaguchi, Extended backward stochastic {Volterra} integral equations and
  their applications to time-inconsistent stochastic recursive control
  problems, Mathematical Control {\&} Related Fields 11~(2) (2021) 197--242.
\newblock \href {https://doi.org/10.3934/mcrf.2020043}
  {\path{doi:10.3934/mcrf.2020043}}.

\bibitem{Yong2007}
J.~Yong, Continuous-time dynamic risk measures by backward stochastic
  {Volterra} integral equations, Applicable Analysis 86~(11) (2007) 1429--1442.
\newblock \href {https://doi.org/10.1080/00036810701697328}
  {\path{doi:10.1080/00036810701697328}}.

\bibitem{Wang2021a}
H.~Wang, J.~Sun, J.~Yong, Recursive utility processes, dynamic risk measures
  and quadratic backward stochastic {Volterra} integral equations, Applied
  Mathematics {\&} Optimization 84~(1) (2021) 145--190.
\newblock \href {https://doi.org/10.1007/s00245-019-09641-7}
  {\path{doi:10.1007/s00245-019-09641-7}}.

\bibitem{Peng1992}
S.~Peng, A nonlinear {Feynman}--{Kac} formula and applications, in: Proceedings
  of Symposium of System Sciences and Control Theory, World Scientific, 1992,
  pp. 173--184.

\bibitem{Ma2002}
J.~Ma, J.~Zhang, Representation theorems for backward stochastic differential
  equations, The Annals of Applied Probability 12~(4) (2002) 1390--1418.
\newblock \href {https://doi.org/10.1214/aoap/1037125868}
  {\path{doi:10.1214/aoap/1037125868}}.

\bibitem{Cheridito2007}
P.~Cheridito, H.~M. Soner, N.~Touzi, N.~Victoir, Second-order backward
  stochastic differential equations and fully nonlinear parabolic {PDEs},
  Communications on Pure and Applied Mathematics 60~(7) (2007) 1081--1110.
\newblock \href {https://doi.org/10.1002/cpa.20168}
  {\path{doi:10.1002/cpa.20168}}.

\bibitem{Soner2011}
H.~M. Soner, N.~Touzi, J.~Zhang, Wellposedness of second order backward {SDEs},
  Probability Theory and Related Fields 153~(1-2) (2011) 149--190.
\newblock \href {https://doi.org/10.1007/s00440-011-0342-y}
  {\path{doi:10.1007/s00440-011-0342-y}}.

\bibitem{Kong2015}
T.~Kong, W.~Zhao, T.~Zhou, Probabilistic high order numerical schemes for fully
  nonlinear parabolic {PDE}s, Communications in Computational Physics 18~(5)
  (2015) 1482–1503.
\newblock \href {https://doi.org/10.4208/cicp.240515.280815a}
  {\path{doi:10.4208/cicp.240515.280815a}}.

\end{thebibliography}

\end{document}